%Nonhomogeneous acYBes
%\documentclass[a4paper,10pt]{article}
\documentclass[12pt, reqno]{amsart}
\usepackage{}
\usepackage{stmaryrd}
\usepackage{amsmath}
\usepackage{amsfonts}
\usepackage{amssymb}
\usepackage[all,cmtip]{xy}           %xypic macro for latex2.09
\usepackage{xspace}
\usepackage{bbding}
\usepackage{txfonts}
\usepackage[shortlabels]{enumitem}
\usepackage{cite}
\usepackage{tikz}
\usepackage{titletoc}

\usepackage{ifpdf}
\ifpdf
 \usepackage[colorlinks,final,backref=page,hyperindex]{hyperref}
\else
 \usepackage[colorlinks,final,backref=page,hyperindex,hypertex]{hyperref}
\fi
\usepackage[active]{srcltx}

%======================================================================
    %was    1, 1.5 for double sp
%======================================================================
%%standard setting
%\topmargin -0.3truein \textheight 8.4truein
%\oddsidemargin 0.2truein
%\evensidemargin 0.2truein \textwidth 440pt
%======================================================================
%%little larger standard setting: good setting
\topmargin -.8cm \textheight 22.8cm \oddsidemargin 0cm \evensidemargin -0cm \textwidth 16.3cm
%======================================================================
%%A4 tall standard setting: good setting
%\topmargin -1.5cm \textheight 25cm \oddsidemargin 0cm \evensidemargin -0cm \textwidth 16cm
%=======================================================================
%%surface setting, no margin
%\topmargin -.8cm \textheight 21.5cm \oddsidemargin -1.5cm \textwidth 19cm \evensidemargin -1.5cm
%==============================================================================
%%texstudio standard full page:
%\topmargin -3.3cm \textheight 22.4cm \oddsidemargin 0cm \evensidemargin 0cm \textwidth 16.3cm
%=========================================================================================
%%texstudio tall full page for continuous text:
%\topmargin -3.3cm \textheight 27.8cm \oddsidemargin 0cm \evensidemargin 0cm \textwidth 16.3cm
%=========================================================================================
%%wide narrow margin for continuous presentation
%\topmargin -3.3cm \textheight 27.3cm \oddsidemargin -2.25cm \evensidemargin -2.25cm \textwidth 20.9cm
%=========================================================================================

%%%%%%%%%%%%%%

\makeatletter

\begin{document}
\newcommand {\emptycomment}[1]{} %to remove paragraphs

\baselineskip=15pt
\newcommand{\nc}{\newcommand}
\newcommand{\delete}[1]{}
\nc{\mfootnote}[1]{\footnote{#1}} % Use this to show footnotes
\nc{\todo}[1]{\tred{To do:} #1}

%\delete{
\nc{\mlabel}[1]{\label{#1}}  % Use this to suppress names
\nc{\mcite}[1]{\cite{#1}}  % Use this to suppress names
\nc{\mref}[1]{\ref{#1}}  % Use this to suppress names
\nc{\meqref}[1]{\eqref{#1}} % Use this to suppress names
\nc{\mbibitem}[1]{\bibitem{#1}} % Use this to show number
%}

\delete{
\nc{\mlabel}[1]{\label{#1}  % Use the next two lines to show names
{\hfill \hspace{1cm}{\bf{{\ }\hfill(#1)}}}}
\nc{\mcite}[1]{\cite{#1}{{\bf{{\ }(#1)}}}}  % Use this lines to show names
\nc{\mref}[1]{\ref{#1}{{\bf{{\ }(#1)}}}}  % Use this lines to show names
\nc{\meqref}[1]{\eqref{#1}{{\bf{{\ }(#1)}}}} % Use this lines to show names
\nc{\mbibitem}[1]{\bibitem[\bf #1]{#1}} % Use this to show name
}

\newcommand {\comment}[1]{{\marginpar{*}\scriptsize\textbf{Comments:} #1}}
\nc{\mrm}[1]{{\rm #1}}
\nc{\id}{\mrm{id}}  \nc{\Id}{\mrm{Id}}
%%%%%%%%%%%%%%%%%%%%%%%%

\def\a{\alpha}
\def\b{\beta}
\def\bd{\boxdot}
\def\bbf{\bar{f}}
\def\bF{\bar{F}}
\def\bbF{\bar{\bar{F}}}
\def\bbbf{\bar{\bar{f}}}
\def\bg{\bar{g}}
\def\bG{\bar{G}}
\def\bbG{\bar{\bar{G}}}
\def\bbg{\bar{\bar{g}}}
\def\bT{\bar{T}}
\def\bt{\bar{t}}
\def\bbT{\bar{\bar{T}}}
\def\bbt{\bar{\bar{t}}}
\def\bR{\bar{R}}
\def\br{\bar{r}}
\def\bbR{\bar{\bar{R}}}
\def\bbr{\bar{\bar{r}}}
\def\bu{\bar{u}}
\def\bU{\bar{U}}
\def\bbU{\bar{\bar{U}}}
\def\bbu{\bar{\bar{u}}}
\def\bw{\bar{w}}
\def\bW{\bar{W}}
\def\bbW{\bar{\bar{W}}}
\def\bbw{\bar{\bar{w}}}
\def\btl{\blacktriangleright}
\def\btr{\blacktriangleleft}
\def\c{\cdot}
\def\ci{\circ}
\def\d{\delta}
\def\dd{\diamondsuit}
\def\D{\Delta}
\def\G{\Gamma}
\def\g{\gamma}
\def\k{\kappa}
\def\l{\lambda}
\def\lr{\longrightarrow}
\def\o{\otimes}
\def\om{\omega}
\def\p{\psi}
\def\r{\rho}
\def\ra{\rightarrow}
\def\rh{\rightharpoonup}
\def\lh{\leftharpoonup}
\def\s{\sigma}
\def\st{\star}
\def\ti{\times}
\def\tl{\triangleright}
\def\tr{\triangleleft}
\def\v{\varepsilon}
\def\vp{\varphi}

%%%%%%%%%%%%%%%%%%%%%%%% Statements
\newtheorem{thm}{Theorem}[section]
\newtheorem{lem}[thm]{Lemma}
\newtheorem{cor}[thm]{Corollary}
\newtheorem{pro}[thm]{Proposition}
\theoremstyle{definition}
\newtheorem{defi}[thm]{Definition}
\newtheorem{ex}[thm]{Example}
\newtheorem{rmk}[thm]{Remark}
\newtheorem{pdef}[thm]{Proposition-Definition}
\newtheorem{condition}[thm]{Condition}
\newtheorem{question}[thm]{Question}
\renewcommand{\labelenumi}{{\rm(\alph{enumi})}}
\renewcommand{\theenumi}{\alph{enumi}}

\nc{\ts}[1]{\textcolor{purple}{Tianshui:#1}}
\nc{\jie}[1]{\textcolor{blue}{LIJIE:#1}}
\font\cyr=wncyr10

%%%%%%%%%%%%%%%%%%%%%%%%%%%%%%%%%%%%%%%%%%%%%%%%%%%%%%%%%%%%%%%%%%%%%%%%%%%%%%%%%%%%%%%%%%%%%%%%%%%%%%%%%%%%%%%%%%%%%%%%%%%%%%%%%%%%
 \title[Nonhomogeneous aYBes]{\bf Nonhomogeneous associative Yang-Baxter equations}

 \author[T. Ma]{Tianshui Ma\textsuperscript{*}}
 \address{School of Mathematics and Information Science, Henan Normal University, Xinxiang 453007, China}
         \email{matianshui@htu.edu.cn}

 \author[J. Li]{Jie Li}
 \address{School of Mathematics and Information Science, Henan Normal University, Xinxiang 453007, China}
         \email{{lijie\_0224@163.com}}

  \thanks{\textsuperscript{*}Corresponding author}

\date{\today}

 \begin{abstract}
  We introduce the notion of associative (BiHom-)Yang-Baxter pair of weight $(\lambda,\gamma)$ which can provide the solution to the double curved Rota-Baxter (BiHom-)system. Equivalent characterizations of (quasitriangular) covariant BiHom-bialgebra are given. We also prove that associative BiHom-Yang-Baxter equation of weight $-1$ can be obtained by the unitary quasitriangular covariant BiHom-bialgebra. At last, we present two approaches to construct (BiHom-)pre-Lie modules from Rota-Baxter (BiHom-)paired modules.
 \end{abstract}

 \keywords{Nonhomogeneous associative Yang-Baxter equations, Rota-Baxter system, Covariant bialgebra}

\subjclass[2020]{
17B38,%17B38 Yang-Baxter equations and Rota-Baxter operators
17A30,%17A30 Nonassociative algebras satisfying other identities
%17B61,%17B61 Hom-Lie and related algebras
17D30,%17D30 (non-Lie) Hom algebras and topics
%16T05,%16T05 Hopf algebras and their applications [See also 16S40, 57T05]
16T10.%16T10 Bialgebras
%16T25,%16T25 Yang-Baxter equations
%17B62 %17B62 Lie bialgebras; Lie coalgebras
%17B63 %17B63 Poisson algebras
}

 \maketitle

\tableofcontents

\numberwithin{equation}{section}
\allowdisplaybreaks

%\newtheorem{definition}{Definition}[section]
%\newtheorem{theorem}{Theorem}[section]
%\newtheorem{lemma}{Lemma}[section]
%\newtheorem{remark}{Remark}[section]
%\newtheorem{proposition}{Proposition}[section]
%\newtheorem{corollary}{Corollary}[section]

%@=h长度i 设置元素间距
%@R=h长度i 设置行间距
%@C=h长度i 设置列间距
%@M=h长度i 设置元素的默认边距
%@W=h长度i 设置元素的默认宽度
%@H=h长度i 设置元素的默认高度
%@L=h长度i 设置标签的边距
%@C=3em@R=8ex@M=1em@W=0ex@H=1em@L=1ex

 \section{Introduction and preliminaries}\mlabel{se:int} The associative Yang-Baxter equation (aYBe) \cite{A01} is analogous to the classical Yang-Baxter equation (cYBe) which is named after C. N. Yang and R. J. Baxter. Rota-Baxter operators \cite{Ba60}, on the other hand, are named after G. Baxter, which is widely used in many fields, such as pure and applied mathematics, and more recently in mathematical physics. One can refer to Guo's book \cite{Guo1} for the detailed theory of Rota-Baxter algebras. The relation between Rota-Baxter operator on Lie algebra and cYBe is closely. Rota-Baxter operator of weight 0 can be regarded as an operator form of the cYBe. To study the representation of Rota-Baxter algebra, Zheng, Guo, Zhang introduced the notion of Rota-Baxter paired module and investigate the properties.

 Recently, in \cite{Br}, Brzezi\'{n}ski introduced the notions of associative Yang-Baxter pair (aYBp) and Rota-Baxter system (the curved version was given in \cite{Br2}), then constructed a class of bialgebra (named covariant bialgebra) via two derivations, which extends Joni, Rota's infinitesimal bialgebra \cite{JR}, and found that some quasitriangular covariant bialgebras give rise to Rota-Baxter systems. In \cite{GMMP}, Graziani, Makhlouf, Menini and Panaite introduced algebras in the group Hom-category, which are called BiHom-associative algebras and involve two commuting multiplicative linear maps. In \cite{LMMP3}, Liu, Makhlouf, Menini and Panaite introduced and studied (quasitriangular) Joni, Rota's infinitesimal BiHom-bialgebras. In \cite{MLi}, Ma and Li proposed the notion of nonhomogeneous associative BiHom-Yang-Baxter equations based on the structure of coboundary $\l$-infinitesimal BiHom-bialgebra (the BiHom-version of Ebrahimi-Fard's weighted infinitesimal bialgebra including Joni, Rota's infinitesimal bialgebras \cite{JR} and Loday, Ronco's infinitesimal bialgebras  \cite{LR}) which covers Liu, Makhlouf, Menini and Panaite's version in \cite{LMMP3} for the BiHom-type of cYBe when the weight $\l=0$ and Ebrahimi-Fard's version in \cite{E06} for the nonhomogeneous type of cYBe when the structure maps $\a=\b=\psi=\om=\id$. More research on BiHom-type algebras can be found in \cite{CMM,CMM20,LC,LMMP,LMMP1,LMMP2,LMMP3,LMMP6,MLiY,MY,MYZZ,ZW}, etc.

 Based on the above research, it is natural to consider that what nonhomogeneous associative (BiHom-)Yang-Baxter pair is and what is the relationship with other algebraic structures, such as curved Rota-Baxter (BiHom-)system, curved weak BiHom-pseudotwistor, covariant BiHom-bialgebra, etc. In this paper, we investigate the relations among them in the setting of BiHom-case. As the application of Rota-Baxter (BiHom-)system, we provide two approaches to construct (BiHom-)pre-Lie modules from Rota-Baxter (BiHom-)paired modules.

 The layout of the paper is as follows. In Section \mref{se:yb}, we present the notions of nonhomogeneous associative (BiHom-)Yang-Baxter pair and double curved Rota-Baxter (BiHom-)system which can cover the homogeneous (BiHom-)associative Yang-Baxter pair in \cite{Br}, the BiHom-Yang-Baxter equation in \cite{LMMP3} and (curved) Rota-Baxter system in \cite{Br,Br2}. The relations among nonhomogeneous associative (BiHom-)Yang-Baxter pair, double curved Rota-Baxter (BiHom-)system, double curved weak BiHom-pseudotwistor and BiHom-associative algebra are discussed (see Theorems \ref{thm:4.46}, \ref{thm:8.1a} and \ref{thm:8.4}). Equivalent characterizations of (quasitriangular) covariant BiHom-bialgebra are also given (see Theorem \mref{thm:2.9}), which can recover \cite[Theorem 3.5]{MY} and \cite[Proposition 3.10]{Br}. And also we prove that the solution to nonhomogeneous associative BiHom-Yang-Baxter equation (abhYBe) can be provided by unitary quasitriangular covariant BiHom-bialgebra (Proposition \ref{pro:2.17}). As a special case, we really obtain the BiHom-version of quasitriangular infinitesimal bialgebra by four structure maps (Proposition \ref{pro:2.11}) which is different from the BiHom-version in \cite{LMMP3}.

 Throughout this paper, $K$ will be a field, and all vector spaces, tensor products, and
 homomorphisms are over $K$. Next we recall some useful definitions which will be used later. Now let us recall some basic notions from \cite{GMMP} in the theory of BiHom-type algebra.

 \begin{defi}\mlabel{de:1.1} A {\bf BiHom-associative algebra} is a 4-tuple $(A,\mu,\alpha,\beta)$, where $A$ is a linear space, $\alpha,\beta:A\lr A$ and $\mu:A\otimes A\lr A$ (write $\mu(a\otimes b)=ab$) are linear maps satisfying the following conditions: $\alpha\circ\beta=\beta\circ\alpha$ and for all $a,b,c\in A$,
 \begin{eqnarray}
 &\alpha(ab)=\alpha(a)\alpha(b),\quad\beta(ab)=\beta(a)\beta(b),&\mlabel{eq:1.2}\\
 &\alpha(a)(bc)=(ab)\beta(c).&\mlabel{eq:1.3}
 \end{eqnarray}
 \end{defi}
 A BiHom-associative algebra $(A,\mu,\alpha,\beta)$ is called {\bf unitary} if there exists an element $1_{A}\in A$ (called a unit) such that
 \begin{eqnarray}
 &\alpha(1_{A})=1_{A},\quad\beta(1_{A})=1_{A},\quad a1_{A}=\alpha(a),\quad 1_{A}a=\beta(a),\quad \forall a\in A.&\mlabel{eq:1.5}
 \end{eqnarray}

 \begin{defi}\mlabel{de:1.2} A {\bf BiHom-coassociative coalgebra} is a 4-tuple $(C,\Delta,\psi,\omega)$, in which $C$ is a linear space, $\psi,\omega:C\lr C$ and $\Delta:C\lr C\otimes C$ are linear maps, such that $\psi\circ\omega=\omega\circ \psi$ and
 \begin{eqnarray}
 &(\psi\otimes\psi)\circ\Delta=\Delta\circ\psi,~~
 (\omega\otimes\omega)\circ\Delta=\Delta\circ\omega,&\mlabel{eq:1.7}\\
 &(\Delta\otimes\psi)\circ\Delta=(\omega\otimes \Delta)\circ\Delta.&\mlabel{eq:1.9}
 \end{eqnarray}
 \end{defi}

 A BiHom-coassociative coalgebra $(C,\Delta,\psi,\omega)$ is called {\bf counitary} if there exists a linear map $\varepsilon:C\lr K $ (called a counit) such that
 \begin{eqnarray}
 &\varepsilon\circ\psi=\varepsilon,\quad \varepsilon\circ\omega=\varepsilon,\quad (\id_{C}\otimes\varepsilon)\circ\Delta=\omega,\quad(\varepsilon\otimes \id_{C})\circ\Delta=\psi.&\mlabel{eq:1.11}
 \end{eqnarray}

 \begin{defi}\mlabel{de:1.3} Let $(A,\mu,\alpha_{A},\b_{A})$ be a BiHom-associative algebra. A {\bf left $(A,\mu,\alpha_{A},\beta_{A})$-module} is a 4-tuple $(M,\triangleright,\alpha_{M},\beta_{M})$,  where $M$ is a linear space,  $\alpha_{M}, \beta_{M}: M\lr M$ and $\triangleright: A\otimes M\lr M$ are linear maps such that, $\alpha_{M}\circ\beta_{M}=\beta_{M}\circ\alpha_{M}$, and for all $a,b\in A, m\in M$,
 \begin{eqnarray}
 &\alpha_{M}(a\triangleright m)=\alpha_{A}(a)\triangleright\alpha_{M}(m),~~\beta_{M}(a\triangleright m)=\beta_{A}(a)\triangleright\beta_{M}(m),&\mlabel{eq:1.13}\\
 &\alpha_{A}(a)\triangleright(b\triangleright m)=(ab)\triangleright\beta_{M}(m).&\mlabel{eq:1.15}
 \end{eqnarray}
 Likewise, we can get the right version of $(A,\mu,\alpha_{A},\beta_{A})$-module.

 If $(M,\triangleright,\alpha_{M},\beta_{M})$ is a left $(A,\mu,\alpha_{A},\beta_{A})$-module and at the same time $(M,\triangleleft,\alpha_{M},\beta_{M})$ is right $(A,\mu,\alpha_{A},\beta_{A})$-module, then $(M,\triangleright,\triangleleft,\alpha_{M},\beta_{M})$ is an {\bf $(A,\mu,\alpha_{A},\beta_{A})$-bimodule} if
 \begin{eqnarray}
 & \alpha_{A}(a)\triangleright(m\triangleleft b)=(a\triangleright m)\triangleleft \beta_{A}(b).&\mlabel{eq:1.16}
 \end{eqnarray}
 \end{defi}

 \begin{rmk}
 When the structure maps $\a=\b=\id$ in Definitions \mref{de:1.1}-\mref{de:1.3}, then we can get the usual definitions of associative algebra, coassociative coalgebra, bimodule, respectively.
 \end{rmk}

 \section{Nonhomogeneous abhYBp and double curved Rota-Baxter (BiHom-)system}\label{se:yb} In this section, we aim to investigate the nonhomogeneous type and meanwhile BiHom-type for the associative Yang-Baxter pair in \cite{Br}. We note here the homogeneous associative BiHom-Yang-Baxter equation (abhYBe) is different from the one introduced by Liu, Makhlouf, Menini, Panaite in \cite{LMMP3}. The solution to nonhomogeneous abhYBe can be obtained by unitary quasitriangular covariant BiHom-bialgebra.

 \subsection{Nonhomogeneous abhYBp}
 For convenience, we introduce the following notation. Let $(A,\mu,\alpha,\beta)$ be a BiHom-associative algebra, $\psi,\omega: A\lr A$ be linear maps, $r, s\in A\otimes A$, we define the following elements in $A\otimes A\otimes A$:
 \begin{eqnarray*}
 &r_{12}s_{23}=\alpha(r^{1})\otimes r^{2}s^{1}\otimes\beta(s^{2}),~~ r_{13}s_{12}=\omega(r^{1})s^{1}\otimes \beta(s^{2})\otimes\alpha\psi(r^{2}),&\\
 &r_{23}s_{13}=\beta\omega(s^{1})\otimes\alpha(r^{1})\otimes r^{2}\psi(s^{2}).&
 \end{eqnarray*}

 \begin{defi}\mlabel{de:4.34} An {\bf associative BiHom-Yang-Baxter pair (abhYBp) of weight $(\lambda,\gamma)$ in $(A,\mu,\alpha,\beta)$} is a pair of elements $r, s\in A\otimes A$ satisfying
 \begin{eqnarray}
 &r_{13}r_{12}-r_{12}r_{23}+s_{23}r_{13}=-\lambda r_{13},&\mlabel{eq:4.53}\\
 &s_{13}r_{12}-s_{12}s_{23}+s_{23}s_{13}=-\gamma s_{13},&\mlabel{eq:4.54}
 \end{eqnarray}
 where $\lambda, \gamma\in K$.

 Specially, let $r=s$, $\lambda=\gamma$, then we call
 \begin{eqnarray}
 r_{13}r_{12}-r_{12}r_{23}+r_{23}r_{13}=-\lambda r_{13} \mlabel{eq:4.54a}
 \end{eqnarray}
 the {\bf associative BiHom-Yang-Baxter equation (abhYBe) of weight $\lambda$ in $(A,\mu,\alpha,\beta)$}.
 \end{defi}

 \begin{rmk} (1) The abhYBe of weight 0 here is different from the one introduced by Liu, Makhlouf, Menini, Panaite in \cite[Definition 4.1]{LMMP} and also studied by the same authors in \cite{LMMP3}. The formula in Eq.(\mref{eq:4.54a}) consists of four structure maps $\a, \b, \psi$ and $\om$. We will discuss the relation between them later.

 (2) If $\a=\b=\psi=\om=\id$ in Eq.(\mref{eq:4.54a}), then the nonhomogeneous associative cYBe introduced in \cite{E06} (also studied in \cite{OP10}) can be obtained and its solution can be provided by B\'{e}zout operators \cite{OP10}. Moreover, $\l=0$, then we can get the associative cYBe studied in \cite{A00} which is an associative analogy of cYBe.

 (3) If $\a=\b=\psi=\om=\id$, then the abhYBp of weight $(0,0)$ in $(A,\mu)$ is exactly the associative Yang-Baxter pair in \cite{Br}.

 (4) Eq.(\ref{eq:4.54a}) was introduced in \cite{MLi} which corresponds to the case of anti-quasitriangular unitary $\l$-infinitesimal BiHom-bialgebra.
 \end{rmk}

 \subsection{Double curved Rota-Baxter BiHom-system}

 As a generalization of Rota-Baxter algebra, Rota-Baxter system was introduced by Brzezi\'{n}ski in \cite{Br} and its curved version can be found in \cite{Br2}. Here we slightly modified the Brzezi\'{n}ski's curved Rota-Baxter system and also extend it to the BiHom-case.

 \begin{defi}\mlabel{de:4.45} A 7-tuple $(A, R, S, \alpha, \beta, \xi, \zeta)$ consisting of a BiHom-associative algebra $(A, \mu, \alpha, \beta)$, linear operators $R, S: A\lr A$, and $\xi,\zeta: A\otimes A\lr A$ is called {\bf a double curved Rota-Baxter BiHom-system}, if for all $a, b \in A$,
 \begin{eqnarray}
 &\alpha\circ R=R\circ \alpha,\quad \alpha\circ S=S\circ \alpha, \quad\beta\circ R=R\circ \beta,\quad \beta\circ S=S\circ \beta,&\mlabel{eq:4.76}\\
 &R(a)R(b)=R(R(a)b)+R(aS(b))+\xi(a\otimes b),&\mlabel{eq:4.77}\\
 &S(a)S(b)=S(R(a)b)+S(aS(b))+\zeta(a\otimes b).&\mlabel{eq:4.78}
 \end{eqnarray} 
 \end{defi}

 \begin{rmk}\mlabel{rmk:4.41B} If $\a=\b=\id$ and $\xi=\zeta$ in Definition \mref{de:4.45}, then we can get curved Rota-Baxter system introduced by Brzezi\'{n}ski in \cite{Br2}. Moreover, if $\xi=\zeta=0$, then Rota-Baxter system can be obtained which was introduced by Brzezi\'{n}ski in \cite{Br}. 
 \end{rmk}

 Let $A$ be a vector space, $F: A\lr A$ a linear map. Then an element $r\in A\otimes A$ is called $F$-{\bf invariant} if $(F\otimes F)(r)=r$.

 The abhYBp of weight $(\lambda,\gamma)$ in $(A,\mu,\alpha,\beta)$ can provide the solution to the double curved Rota-Baxter system as follows.

 \begin{thm}\mlabel{thm:4.46} Let $(A,\mu,\alpha,\beta)$ be a BiHom-associative algebra, $\psi,\omega: A\lr A$ be linear maps such that $\psi(ab)=\psi(a)\psi(b)$, $\omega(ab)=\omega(a)\omega(b)$, $\alpha,\beta,\psi,\omega$ are bijective and any two of them commute. Assume that $(r, s)$ is an abhYBp of weight $(\lambda,\gamma)$ in $(A,\mu,\alpha,\beta)$ such that $r, s$ are $\alpha,\beta,\psi,\omega$-invariant. Define $R, S: A\lr A$ by
 \begin{eqnarray}
 &R(a)=\beta^{2}\psi(r^{1})
 (\alpha^{-1}\beta^{-1}(a)\alpha\omega(r^{2})),\quad S(a)=\beta^{2}\psi(s^{1})
 (\alpha^{-1}\beta^{-1}(a)\alpha\omega(s^{2})),&\mlabel{eq:4.81}
 \end{eqnarray}
 and $\xi,\zeta: A\otimes A\lr A$ by
 \begin{eqnarray}
 &\xi(a\otimes b)=\lambda R(ab),\quad \zeta(a\otimes b)=\gamma S(ab),&\mlabel{eq:4.79}
 \end{eqnarray}
 for all $a,b\in A$. Then $(A, R, S, \alpha, \beta, \xi, \zeta)$ is a double curved Rota-Baxter BiHom-system.
 \end{thm}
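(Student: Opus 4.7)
The statement decomposes into three claims: the compatibility relations \meqref{eq:4.76}, the Rota-Baxter-type identity \meqref{eq:4.77} for $R$, and its analogue \meqref{eq:4.78} for $S$. My plan is to verify these three in turn.

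\textbf{Compatibility with $\alpha$ and $\beta$.} For each of the four identities in \meqref{eq:4.76}, I would substitute the definition \meqref{eq:4.81} of $R$ (or $S$) and compute directly. For instance,
\begin{align*}
\alpha(R(a)) &= \alpha\bigl(\beta^{2}\psi(r^{1}) \cdot (\alpha^{-1}\beta^{-1}(a) \cdot \alpha\omega(r^{2}))\bigr) \\
&= \beta^{2}\psi(\alpha(r^{1})) \cdot \bigl(\alpha^{-1}\beta^{-1}(\alpha(a)) \cdot \alpha\omega(\alpha(r^{2}))\bigr) = R(\alpha(a)),
\end{align*}
the first equality using the multiplicativity of $\alpha$ on $A$ (from \meqref{eq:1.2}) together with its commutation with $\beta,\psi,\omega$, and the last equality using the $\alpha$-invariance of $r$. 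The remaining three identities are verified identically (using $\beta$-invariance for $\beta\circ R=R\circ\beta$, and the same argument with $s$ in place of $r$ for $S$).

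\textbf{Rota-Baxter identities.} To prove \meqref{eq:4.77}, I would expand each of the four terms $R(a)R(b)$, $R(R(a)b)$, $R(aS(b))$, $\lambda R(ab)$ by substituting \meqref{eq:4.81} and applying BiHom-associativity \meqref{eq:1.3} repeatedly to normalize each into a canonical triple product indexed by two tensor factors (two copies of $r$ for $R(a)R(b)$, $R(R(a)b)$ and $R(ab)$; one copy of $r$ and one of $s$ for $R(aS(b))$ and for one side of $R(a)R(b)$). The goal is that after normalization, the required identity becomes precisely Eq. \meqref{eq:4.53} with $a$ inserted between the first two slots and $b$ between the last two slots, then contracted by the multiplication $A^{\otimes 3}\to A$. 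The twist prefactors $\alpha,\beta,\psi,\omega$ appearing in the defining formulas of $r_{12}s_{23}$, $r_{13}s_{12}$ and $r_{23}s_{13}$ have been set up precisely so as to match the twists one produces when reassociating expressions like $\beta^{2}\psi(r^{1})(\alpha^{-1}\beta^{-1}(R(a)b)\,\alpha\omega(r^{2}))$; the $\alpha,\beta,\psi,\omega$-invariance of $r$ and $s$ is then used to absorb leftover occurrences of these maps. The identity \meqref{eq:4.78} is obtained by the same procedure applied to Eq. \meqref{eq:4.54}, with $s$ playing the role of the ``outer'' element and the same two copies of the inner element.

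\textbf{Main obstacle.} The principal difficulty is bookkeeping: each of the four tensor expressions appearing in the abhYBp relations must be matched, via repeated applications of BiHom-associativity together with the multiplicativity and invariance properties of $\alpha,\beta,\psi,\omega$, with the operator expression it represents. This is a long but entirely mechanical computation, and once the four identifications on each side are established, the desired Rota-Baxter-type equations follow from \meqref{eq:4.53}-\meqref{eq:4.54} by linearity, together with the definitions of $\xi$ and $\zeta$ in \meqref{eq:4.79}.
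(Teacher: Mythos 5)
Your proposal takes essentially the same route as the paper's proof: the commutation relations \meqref{eq:4.76} are noted to follow immediately from the $\alpha,\beta,\psi,\omega$-invariance of $r$ and $s$, and \meqref{eq:4.77} is verified exactly as you describe --- expand $R(a)R(b)$ via BiHom-associativity \meqref{eq:1.3}, substitute the abhYBp relation \meqref{eq:4.53} in the middle, reassociate by \meqref{eq:1.3} again to recognize $R(R(a)b)+R(aS(b))+\lambda R(ab)$, and obtain \meqref{eq:4.78} symmetrically from \meqref{eq:4.54}. One trivial slip in your bookkeeping aside ($R(ab)$, which matches the $-\lambda r_{13}$ term, contains one copy of $r$, not two), the plan is correct and coincides with the paper's argument.
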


 \begin{proof}
 The fact that $R,S$ commute with $\alpha$ and $\beta$ follows immediately from the fact that $r,s$ are $\alpha,\beta, \psi,\omega$-invariant. Next we check that Eq.(\mref{eq:4.77}) holds. For all $a,b\in A$,
 \begin{eqnarray*}
 R(a)R(b)
 &\stackrel{(\mref{eq:1.3})}=&\alpha\beta^{2}\psi(r^{1})((\alpha^{-1}\beta^{-1}(a)
 (\omega(r^{2})\psi(\bar{r}^{1})))(\alpha^{-1}\beta^{-1}(b)\alpha\omega(\bar{r}^{2})))\\
 &\stackrel{(\mref{eq:4.53})}=&
 (\beta^{2}\psi^{2}(r^{1})\beta^{2}\psi^{2}\omega^{-1}(\bar{r}^{1}))
 ((\alpha^{-1}\beta^{-1}(a)
 \psi\beta(\bar{r}^{2}))(\alpha^{-1}\beta^{-1}(b)\alpha^{2}\beta^{-1}\psi\omega(r^{2})))\\
 &&+\beta^{3}\psi^{2}(r^{1})((\alpha^{-1}\beta^{-1}(a)
 \alpha\psi(s^{1}))(\alpha^{-1}\beta^{-1}(b)(\alpha\beta^{-1}\omega(s^{2})\alpha\beta^{-1}\omega\psi(r^{2}))))\\
 &&+\lambda\beta^{2}\psi^{2}(r^{1})((\alpha^{-1}\beta^{-1}(a)
 \cdot 1)(\alpha^{-1}\beta^{-1}(b)\alpha\beta^{-1}\omega\psi(r^{2})))\\
 &\stackrel{(\mref{eq:1.3})}=&\beta^{2}\psi(\bar{r}^{1})
 ((\beta\psi(r^{1})((\alpha^{-2}\beta^{-2}(a)\beta^{-1}\omega(r^{2}))(\alpha^{-1}\beta^{-2}(b))))
 \alpha\omega(\bar{r}^{2}))\\
 &&+\beta^{2}\psi(r^{1})(((\alpha^{-2}\beta^{-1}(a)\alpha^{-1}\beta\psi(s^{1})) (\alpha^{-2}\beta^{-1}(b)\omega(s^{2})))\alpha\omega(r^{2}))\\
 &&+\lambda\beta^{2}\psi(r^{1})((\alpha^{-1}\beta^{-1}(a)\alpha^{-1}\beta^{-1}(b))  \alpha\omega(r^{2}))\\
 &=&R(R(a)b)+R(aS(b))+\lambda R(ab).
 \end{eqnarray*}
 Similarly, we can obtain Eq.(\mref{eq:4.78}). These finish the proof.
 \end{proof} 

 \begin{cor}\mlabel{cor:4.46a} Let $(A,\mu,\alpha,\beta)$ be a BiHom-associative algebra, $\psi,\omega: A\lr A$ be linear maps such that $\psi(ab)=\psi(a)\psi(b)$, $\omega(ab)=\omega(a)\omega(b)$, $\alpha,\beta,\psi,\omega$ are bijective and any two of them commute. Assume that $r$ is a solution to abhYBe of weight $\l$ in $(A,\mu,\alpha,\beta)$ such that $r$ is $\alpha,\beta,\psi,\omega$-invariant. Define $R: A\lr A$ by
 \begin{eqnarray*}
 &R(a)=\beta^{2}\psi(r^{1})(\alpha^{-1}\beta^{-1}(a)\alpha\omega(r^{2})).&
 \end{eqnarray*}
 Then $(A, \mu, R, \alpha, \beta)$ is a Rota-Baxter BiHom-associative algebra of wight $\l$.
 \end{cor}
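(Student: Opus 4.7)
The plan is to deduce this corollary directly from Theorem~\mref{thm:4.46} by taking $s = r$ and $\gamma = \lambda$. First, I would observe that under this substitution the two abhYBp conditions \meqref{eq:4.53} and \meqref{eq:4.54} both collapse to the single equation
\begin{equation*}
r_{13}r_{12} - r_{12}r_{23} + r_{23}r_{13} = -\lambda r_{13},
\end{equation*}
which is precisely the abhYBe of weight $\lambda$ that $r$ is assumed to solve. Hence $(r,r)$ is an abhYBp of weight $(\lambda,\lambda)$ in $(A,\mu,\alpha,\beta)$, and the invariance hypotheses of Theorem~\mref{thm:4.46} on $r$ (respectively $s=r$) are met by assumption.

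Second, the operator $R$ defined in the statement coincides with both of the operators $R$ and $S$ constructed in Theorem~\mref{thm:4.46} under this substitution. Therefore Theorem~\mref{thm:4.46} applies and produces a double curved Rota-Baxter BiHom-system $(A, R, R, \alpha, \beta, \xi, \xi)$ with $\xi(a\otimes b) = \lambda R(ab)$. Extracting equation \meqref{eq:4.77} (which now coincides with \meqref{eq:4.78}) yields
\begin{equation*}
R(a)R(b) = R(R(a)b) + R(a R(b)) + \lambda R(ab),
\end{equation*}
while \meqref{eq:4.76} supplies the commutation relations $\alpha \circ R = R \circ \alpha$ and $\beta \circ R = R \circ \beta$. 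Together these are exactly the defining axioms of a Rota-Baxter BiHom-associative algebra of weight $\lambda$.

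Since the full verification is already subsumed by the proof of Theorem~\mref{thm:4.46}, no further work is required and there is no substantive obstacle; the corollary is essentially a restatement of Theorem~\mref{thm:4.46} in the diagonal case $r=s$, $\lambda=\gamma$. The only point worth a sentence of comment is that the two curvatures $\xi$ and $\zeta$ of the double curved system agree and both equal $\lambda R(\mu(-\otimes-))$, which is precisely the form that absorbs into the weighted Rota-Baxter identity rather than leaving a genuine curvature term.
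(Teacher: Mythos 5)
Your proposal is correct and takes essentially the same route as the paper, which likewise proves the corollary by specializing Theorem \mref{thm:4.46} to the diagonal case $r=s$, $\lambda=\gamma$ and reading off Eqs.~(\mref{eq:4.76}) and (\mref{eq:4.77}) with $S=R$ and $\xi=\zeta=\lambda R\circ\mu$. Your explicit check that the two abhYBp conditions collapse to the abhYBe and that the curvature term absorbs into the weighted Rota-Baxter identity simply spells out what the paper's one-line proof leaves implicit.
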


 \begin{proof} Based on the proof of Theorem \mref{thm:4.46}, when $r=s$ and $\l=\g$, we can get that $R$ is a Rota-Baxter operator of weight $\l$.
 \end{proof}

 \begin{rmk}
 If $\a=\b=\psi=\om=\id$ in Corollary \mref{cor:4.46a}, then we have \cite[Proposition 3.23]{E06}.
 \end{rmk}

 \begin{cor}\mlabel{cor:4.46b} (\cite[Proposition 3.4]{Br}) Let $(A,\mu)$ be an associative algebra, $(r, s)$ be an aYBp of weight $(0,0)$ in $(A,\mu)$. Define $R, S: A\lr A$ by
 \begin{eqnarray*}
 &R(a)=r^{1}ar^{2},\quad S(a)=s^{1}as^{2}.&
 \end{eqnarray*}
 Then $(A, R, S)$ is a Rota-Baxter system.
 \end{cor}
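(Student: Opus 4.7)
The plan is to deduce this as an immediate specialization of Theorem \ref{thm:4.46} by setting $\alpha = \beta = \psi = \omega = \id_{A}$ and $\lambda = \gamma = 0$. Under these choices the BiHom-associative algebra $(A,\mu,\alpha,\beta)$ collapses to an ordinary associative algebra $(A,\mu)$; the hypotheses that $\psi, \omega$ are multiplicative, that all four maps are bijective and pairwise commuting, and that $r, s$ are $\alpha,\beta,\psi,\omega$-invariant are trivially satisfied; and the abhYBp of weight $(\lambda,\gamma) = (0,0)$ is by Remark~(3) exactly the aYBp of weight $(0,0)$ from \cite{Br}.

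Next I would check that the definitions of $R, S, \xi, \zeta$ in Theorem \ref{thm:4.46} reduce to the ones stated here. The formula
\[
R(a) = \beta^{2}\psi(r^{1})\bigl(\alpha^{-1}\beta^{-1}(a)\,\alpha\omega(r^{2})\bigr)
\]
becomes $R(a) = r^{1}(a r^{2})$, which, since $A$ is associative, equals $r^{1} a r^{2}$; analogously $S(a) = s^{1} a s^{2}$. The curvature maps satisfy $\xi(a \otimes b) = 0 \cdot R(ab) = 0$ and $\zeta(a \otimes b) = 0 \cdot S(ab) = 0$, so the two defining identities of a double curved Rota-Baxter BiHom-system reduce to
\[
R(a)R(b) = R(R(a)b) + R(aS(b)), \qquad S(a)S(b) = S(R(a)b) + S(aS(b)),
\]
which are precisely the defining axioms of a Rota-Baxter system in the sense of \cite{Br}. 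The commutation conditions in Eq.\meqref{eq:4.76} with $\alpha = \beta = \id$ are vacuous.

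There is really no obstacle here beyond bookkeeping; the only thing worth pausing on is confirming that the triple-tensor conventions in Definition \mref{de:4.34} for $r_{12}s_{23}$, $r_{13}s_{12}$, $r_{23}s_{13}$ collapse, under $\alpha=\beta=\psi=\omega=\id$, to the classical placements $r^{1} \otimes r^{2} s^{1} \otimes s^{2}$, $r^{1} s^{1} \otimes s^{2} \otimes r^{2}$, and $s^{1} \otimes r^{1} \otimes r^{2} s^{2}$ respectively. Once this is observed, Eqs.~\meqref{eq:4.53}--\meqref{eq:4.54} with $\lambda = \gamma = 0$ are exactly the pair of identities defining an aYBp in \cite{Br}, and Theorem \mref{thm:4.46} directly yields the conclusion.
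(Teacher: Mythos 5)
Your proposal is correct and follows exactly the paper's own route: the paper proves Corollary \mref{cor:4.46b} in one line by setting $\a=\b=\psi=\om=\id$ and $\l=\g=0$ in Theorem \mref{thm:4.46}, which is precisely your specialization. Your extra bookkeeping (checking that $R(a)$ reduces to $r^{1}ar^{2}$, that the curvatures $\xi,\zeta$ vanish, and that the triple-tensor placements collapse to the classical ones) correctly spells out what the paper leaves implicit.
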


 \begin{proof} Let $\a=\b=\psi=\om=\id$ and $\l=\g=0$ in Theorem \mref{thm:4.46}.
 \end{proof}

 \begin{lem}\mlabel{lem:8.1} (1) Let $(A, R, S, \alpha,\beta, \xi, \zeta)$ be a double curved Rota-Baxter BiHom-system and for all $a,b\in A$ define
 \begin{eqnarray}
 &a\ast b= R(a)b+ aS(b).&\mlabel{eq:8.1}
 \end{eqnarray}
 Then $(A, \ast, \alpha, \beta)$ is a BiHom-associative algebra if and only if, for all $a,b,c\in A$
 \begin{eqnarray}
 &\alpha(a)\zeta(b\otimes c)=\xi(a\otimes b)\beta(c).&\mlabel{eq:8.2t}
 \end{eqnarray}

 (2) In particular, if $(A, \ast, \alpha, \beta)$ is unitary (i.e., $\exists$ $1_A\in A$ such that Eq.(\mref{eq:1.5}) holds for $(A, \ast, \alpha, \beta))$, then $(A, \ast, \alpha, \beta)$ is a BiHom-associative algebra if and only if, for all $a, b\in A$,
 \begin{eqnarray}
 &\zeta(a\otimes b)=\a^{-1}\zeta(a\otimes 1)b,&\mlabel{eq:8.14a}\\
 &\xi(a\otimes b)=a\b^{-1}\xi(1\otimes b),&\mlabel{eq:8.14b}\\
 &\b\zeta(a\otimes 1)=\a\xi(1\otimes a).&\mlabel{eq:8.14}
 \end{eqnarray}
 \end{lem}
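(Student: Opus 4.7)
The plan for part (1) is a direct expansion, following four steps. First, verify that $\alpha$ and $\beta$ are multiplicative for $\ast$; this is immediate from \meqref{eq:4.76} (so $R$ and $S$ commute with $\alpha, \beta$) together with the multiplicativity of $\alpha, \beta$ on $\mu$ from \meqref{eq:1.2}. Second, expand both sides of the BiHom-associativity condition $\alpha(a)\ast(b\ast c) = (a \ast b)\ast \beta(c)$ using the definition \meqref{eq:8.1} of $\ast$, producing four terms on each side. Third, apply \meqref{eq:1.3} to rebracket products of the form $\alpha(x)(yz) = (xy)\beta(z)$ so that $R(a)R(b)$ appears on one side and $S(b)S(c)$ on the other. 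Fourth, use \meqref{eq:4.77} and \meqref{eq:4.78} to substitute the curved Rota-Baxter relations for $R(a)R(b)$ and $S(b)S(c)$. After cancellation, the difference of the two sides collapses to $\alpha(a)\zeta(b\otimes c) - \xi(a\otimes b)\beta(c)$, which proves the equivalence. The main obstacle here is the careful bookkeeping of the eight initial terms.

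For part (2), I handle the two directions separately. The reverse direction (that \meqref{eq:8.14a}, \meqref{eq:8.14b}, \meqref{eq:8.14} jointly imply \meqref{eq:8.2t}) is a clean chain of substitutions and does not even require the unit hypothesis. Starting from $\alpha(a)\zeta(b\otimes c)$, apply \meqref{eq:8.14a} to rewrite $\zeta(b\otimes c)$ as $\alpha^{-1}\zeta(b\otimes 1)\cdot c$; rebracket via \meqref{eq:1.3} to obtain $(a\cdot \alpha^{-1}\zeta(b\otimes 1))\beta(c)$; use \meqref{eq:8.14} in the form $\alpha^{-1}\zeta(b\otimes 1) = \beta^{-1}\xi(1\otimes b)$; finally apply \meqref{eq:8.14b} to identify $a\cdot \beta^{-1}\xi(1\otimes b)$ with $\xi(a\otimes b)$. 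This yields $\xi(a\otimes b)\beta(c)$, i.e., \meqref{eq:8.2t}.

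The forward direction of part (2) (assuming \meqref{eq:8.2t} together with the $\ast$-unit $1_A$, derive the three identities) proceeds by three specializations of \meqref{eq:8.2t}. Setting $a = 1_A$ yields $1_A\cdot\zeta(b\otimes c) = \xi(1_A\otimes b)\beta(c)$; setting $c = 1_A$ yields $\alpha(a)\zeta(b\otimes 1_A) = \xi(a\otimes b)\cdot 1_A$; and setting both gives $1_A\cdot\zeta(b\otimes 1_A) = \xi(1_A\otimes b)\cdot 1_A$. Interpreting left-multiplication by $1_A$ as $\beta$ and right-multiplication by $1_A$ as $\alpha$, each of these specializations rearranges, via bijectivity of $\alpha$ and $\beta$, into \meqref{eq:8.14a}, \meqref{eq:8.14b}, and \meqref{eq:8.14} respectively. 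The main obstacle is precisely this last interpretive step: one must justify passing from the $\ast$-unit to the identification of $1_A\cdot x$ with $\beta(x)$ and $x\cdot 1_A$ with $\alpha(x)$ under $\mu$ on the relevant elements, which is the substantive use of the unitary hypothesis.
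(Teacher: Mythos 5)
Your proposal matches the paper's proof essentially step for step: part (1) is the same expand--rebracket--substitute computation showing the associator difference collapses to $\alpha(a)\zeta(b\otimes c)-\xi(a\otimes b)\beta(c)$, and part (2) uses the same specializations $a=1_A$, $c=1_A$, $a=c=1_A$ of \meqref{eq:8.2t} for the forward direction and the identical four-step chain \meqref{eq:8.14a}, \meqref{eq:1.3}, \meqref{eq:8.14}, \meqref{eq:8.14b} for the converse (which, as you correctly observe, needs no unit). The one substantive point you flag---that the argument identifies $1_A\cdot x$ with $\beta(x)$ and $x\cdot 1_A$ with $\alpha(x)$ under the original product $\mu$, not just under $\ast$---is used silently by the paper as well, so your version is if anything more careful; the only slight imprecision is the word ``respectively,'' since the $a=1_A$ and $c=1_A$ specializations give $\zeta(a\otimes b)=\beta^{-1}\xi(1\otimes a)b$ and $\xi(a\otimes b)=a\,\alpha^{-1}\zeta(b\otimes 1)$, which must each be combined with \meqref{eq:8.14} to reach the exact forms \meqref{eq:8.14a} and \meqref{eq:8.14b}.
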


 \begin{proof} For Part (1), we only need to check that the equation below. For all $a, b, c\in A$, we have
 \begin{eqnarray*}
 (a\ast b)\ast \beta(c)-\alpha(a)\ast (b \ast c)
 &\stackrel{(\mref{eq:8.1})}=&R(R(a)b+ aS(b))\beta(c)+(R(a)b+ aS(b))S(\beta(c))\\
 &&-R(\alpha(a))(R(b)c+ bS(c))-\alpha(a)S(R(b)c+ bS(c))\\
 &\stackrel{(\mref{eq:4.77})(\mref{eq:4.78})}=&(R(a)R(b))\beta(c)-\xi(a\otimes b)\beta(c) +(R(a)b)\beta S(c)\\
 &&+(aS(b))\beta S(c)-\alpha R(a)(R(b)c)-\alpha R(a)(bS(c))\\
 &&-\alpha (a)(S(b)S(c))+ \alpha (a)\zeta(b\otimes c)\\
 &\stackrel{(\mref{eq:1.3})}=&\alpha (a)\zeta(b\otimes c)-\xi(a\otimes b)\beta(c),
 \end{eqnarray*}
 as desired.

 If $(A, \ast, \alpha, \beta)$ is unitary, then by Eq.(\mref{eq:8.2t}), for all $a,b\in A$, we have
 $$
 \left.
 \begin{matrix}
 \b\zeta(a\otimes b)=\xi(1\otimes a)\beta(b)\\
 \a\xi(a\otimes b)=\alpha(a)\zeta(b\otimes 1)\\
 \end{matrix}
 \right\}\Rightarrow
 \left\{
 \begin{matrix}
 \zeta(a\otimes b)=\b^{-1}\xi(1\otimes a)b=\a^{-1}\zeta(a\otimes 1)b\\
 \xi(a\otimes b)=a\a^{-1}\zeta(b\otimes 1)=a\b^{-1}\xi(1\otimes b)\\
 \end{matrix}
 \right. .
 $$
 Thus Eqs.(\ref{eq:8.14a}) and (\ref{eq:8.14b}) hold. Let $a=c=1$ in Eq.(\mref{eq:8.2t}), then we can obtain Eq.(\ref{eq:8.14}).

 Conversely, we calculate
 \begin{eqnarray*}
 \alpha (a)\zeta(b\otimes c)&\stackrel{(\mref{eq:8.14a})}=&\alpha (a)(\alpha^{-1}\zeta(b\otimes 1)c)\stackrel{(\mref{eq:1.3})}=(a\alpha^{-1}\zeta(b\otimes 1))\beta(c)\\
 &\stackrel{(\mref{eq:8.14})}=&(a\beta^{-1}\xi(1\otimes b))\beta(c)\stackrel{(\mref{eq:8.14b})}=\xi(a\otimes b)\beta(c).
 \end{eqnarray*}
 Then Eq.(\mref{eq:8.2t}) holds. Therefore we prove the second part.
 \end{proof}

 \begin{thm}\mlabel{thm:8.1a} (1) Let $(A, R, S, \alpha,\beta, \xi)$ be a curved Rota-Baxter BiHom-system and for all $a, b\in A$ define $\ast$ by Eq.(\ref{eq:8.1}).
 Then $(A, \ast, \alpha, \beta)$ is a BiHom-associative algebra if and only if, for all $a,b,c\in A$
 \begin{eqnarray*}
 &\alpha(a)\xi(b\otimes c)=\xi(a\otimes b)\beta(c).&
 \end{eqnarray*}

 (2) In particular, if $(A, \ast, \alpha, \beta)$ is unitary, then $(A, \ast, \alpha, \beta)$ is a BiHom-associative algebra if and only if, there exists an element $\kappa\in A$ such that for all $a, b\in A$,
 \begin{eqnarray}
 &\a(\kappa)=\b(\kappa),&\mlabel{eq:8.14c}\\
 &\kappa\a(a)=\b(a)\kappa,&\mlabel{eq:8.14d}\\
 &\xi(a\otimes b)=\b^{-1}(\k)\b^{-1}(ab).&\mlabel{eq:8.14e}
 \end{eqnarray}
 \end{thm}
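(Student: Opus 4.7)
The plan is to realize Theorem \ref{thm:8.1a} as the $\zeta = \xi$ specialization of Lemma \ref{lem:8.1}. Part (1) is then immediate: the compatibility (\ref{eq:8.2t}) of Lemma \ref{lem:8.1}(1) collapses to $\alpha(a)\xi(b\otimes c) = \xi(a\otimes b)\beta(c)$. The content lies in Part (2), where I need to show that the three identities (\ref{eq:8.14a})--(\ref{eq:8.14}) of Lemma \ref{lem:8.1}(2) (with $\zeta = \xi$) are equivalent to the existence of a single element $\kappa \in A$ satisfying (\ref{eq:8.14c})--(\ref{eq:8.14e}).

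For the forward direction, I would set $\kappa := \xi(1_A\otimes 1_A)$. Substituting $a = 1_A$ into (\ref{eq:8.14}) of Lemma \ref{lem:8.1} immediately gives $\alpha(\kappa) = \beta(\kappa)$, which is (\ref{eq:8.14c}). Next, specializing (\ref{eq:8.14a}) at $b = 1_A$ and (\ref{eq:8.14b}) at $a = 1_A$ and using the unit axioms (\ref{eq:1.5}) produce the explicit formulas $\xi(1_A\otimes b) = \alpha^{-1}(\kappa)b$ and $\xi(a\otimes 1_A) = a\beta^{-1}(\kappa)$. Feeding these back into (\ref{eq:8.14}) and using multiplicativity of $\alpha, \beta$ together with (\ref{eq:8.14c}) yields $\beta(a)\kappa = \kappa\alpha(a)$, i.e.\ (\ref{eq:8.14d}). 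Finally, (\ref{eq:8.14b}) rewrites $\xi(a\otimes b) = a\bigl(\alpha^{-1}\beta^{-1}(\kappa)\beta^{-1}(b)\bigr)$; one re-association via (\ref{eq:1.3}) followed by one application of (\ref{eq:8.14d}) to transpose $\kappa$ past $a$ converts this into $\beta^{-1}(\kappa)\beta^{-1}(ab)$, which is (\ref{eq:8.14e}).

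For the converse, I would assume $\kappa$ satisfies (\ref{eq:8.14c})--(\ref{eq:8.14e}) and verify the criterion of Part (1). Both $\alpha(a)\xi(b\otimes c)$ and $\xi(a\otimes b)\beta(c)$, after substitution of (\ref{eq:8.14e}), should reduce -- by repeated use of BiHom-associativity (\ref{eq:1.3}) and the scalar identities (\ref{eq:8.14c}) and (\ref{eq:8.14d}) -- to a common form such as $\kappa\bigl(\beta^{-1}(ab)c\bigr)$. Part (1) then concludes.

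The main obstacle will be the mechanical bookkeeping: each use of (\ref{eq:1.3}) shifts $\alpha$'s and $\beta$'s between tensor factors, so one must track the correct powers of $\alpha^{\pm 1}, \beta^{\pm 1}$ dressing $\kappa$ at every stage, and (\ref{eq:8.14d}) can be applied only once the factors sit in the right position. Nothing is genuinely deep, but treating $\alpha, \beta$ as if they were the identity will immediately derail the calculation.
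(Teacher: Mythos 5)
Your proposal is correct and follows essentially the same route as the paper: both specialize Lemma \ref{lem:8.1} to $\zeta=\xi$, set $\kappa=\xi(1_A\otimes 1_A)$, derive \eqref{eq:8.14c} by putting $a=1_A$ in \eqref{eq:8.14}, and shuttle between the unital conditions and \eqref{eq:8.14c}--\eqref{eq:8.14e} via the unit axioms \eqref{eq:1.5}, BiHom-associativity \eqref{eq:1.3} and the transposition rule \eqref{eq:8.14d}, the only (harmless) divergence being that in the direction ``$\kappa$ exists $\Rightarrow$ associative'' you verify the Part (1) criterion $\alpha(a)\xi(b\otimes c)=\xi(a\otimes b)\beta(c)$ directly (both sides indeed reduce to $\kappa\bigl(\beta^{-1}(ab)c\bigr)$), whereas the paper instead re-derives the three conditions of Lemma \ref{lem:8.1}(2). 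Two trivial slips to fix when writing it out: the formulas $\xi(1_A\otimes b)=\alpha^{-1}(\kappa)b$ and $\xi(a\otimes 1_A)=a\beta^{-1}(\kappa)$ come from setting $a=1_A$ in \eqref{eq:8.14a} and $b=1_A$ in \eqref{eq:8.14b} (you transposed the two substitutions), and the reduction of $a\bigl(\alpha^{-1}\beta^{-1}(\kappa)\beta^{-1}(b)\bigr)$ to $\beta^{-1}(\kappa)\beta^{-1}(ab)$ requires \eqref{eq:1.3} twice, once before and once after the application of \eqref{eq:8.14d}.
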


 \begin{proof} Part (1) is obvious by letting $\xi=\zeta$ in Lemma \ref{lem:8.1}. For Part (2), based on Part (2) in Lemma \ref{lem:8.1} and Part (1), we need check that Eqs.(\ref{eq:8.14c})-(\ref{eq:8.14e}) are equivalent to Eqs.(\ref{eq:8.14b}), (\ref{eq:8.14}) and
 \begin{eqnarray}
 &\xi(a\otimes b)=\a^{-1}\xi(a\otimes 1)b.&\mlabel{eq:8.14f} 
 \end{eqnarray}

 $(\Longrightarrow)$ Firstly, for all $a, b\in A$, we verify that Eq.(\mref{eq:8.14f}) holds as follows.
 \begin{eqnarray*}
 \a^{-1}\xi(a\otimes 1)b
 &\stackrel{(\mref{eq:8.14e})}=&\a^{-1}(\b^{-1}(\k)\b^{-1}(a1))b\\
 &\stackrel{(\mref{eq:1.5})}=&\a^{-1}(\b^{-1}(\k)\b^{-1}\a(a))b\stackrel{(\mref{eq:1.3})}=\xi(a\otimes b).
 \end{eqnarray*}
 By Eqs.(\mref{eq:8.14c}) and (\mref{eq:8.14d}), one gets
 \begin{eqnarray}
 &\b^{-1}(\k)\b^{-1}(ab)=\a^{-1}(ab)\a^{-1}(\k).& \mlabel{eq:8.14i}
 \end{eqnarray}
 So by Eq.(\mref{eq:8.14e}), we have
 \begin{eqnarray}
 &\xi(a\otimes b)=\a^{-1}(ab)\a^{-1}(\k).&\mlabel{eq:8.14h}
 \end{eqnarray}
 Then
 \begin{eqnarray*}
 a\b^{-1}\xi(1\otimes b)
 &\stackrel{(\mref{eq:8.14h})}=&a\b^{-1}(\a^{-1}(1b)\a^{-1}(\k))\\
 &\stackrel{(\mref{eq:1.5})}=&a\b^{-1}(\a^{-1}\b(b)\a^{-1}(\k))\\
 &\stackrel{(\mref{eq:1.3})}=&\a^{-1}(ab)\a^{-1}(\k)
 \stackrel{(\mref{eq:8.14i})}=\xi(a\otimes b),\\
 \b\xi(a\otimes 1)
 &\stackrel{(\mref{eq:8.14e})}=&\b(\b^{-1}(\k)\b^{-1}(a1))\stackrel{(\mref{eq:1.5})}=\k\a(a)\\
 &\stackrel{(\mref{eq:8.14d})}=&\b(a)\k\stackrel{(\mref{eq:1.5})}=\a(\a^{-1}(1a)\a^{-1}(\k)) \stackrel{(\mref{eq:8.14h})}=\a\xi(1\otimes a).
 \end{eqnarray*}
 Thus Eqs.(\ref{eq:8.14b}) and (\ref{eq:8.14}) hold.

 $(\Longleftarrow)$ Set $\k=\xi(1\o 1)\in A$. Let $a=1$ in Eq.(\ref{eq:8.14}), then we obtain Eq.(\ref{eq:8.14c}).
 By Eqs.(\ref{eq:8.14}) and (\ref{eq:8.14f}), then
 \begin{eqnarray}
 &\xi(a\otimes b)=\b^{-1}\xi(1\o a)b.&\mlabel{eq:8.14j}
 \end{eqnarray}
 Thus Eq.(\ref{eq:8.14e}) can be proved by Eqs.(\ref{eq:8.14c}) and (\ref{eq:8.14j}).

 Similarly, by Eqs.(\ref{eq:8.14b}), (\ref{eq:8.14}) and (\ref{eq:8.14c}), we can get
 \begin{eqnarray}
 &\xi(a\otimes b)=\a^{-1}(ab)\a^{-1}(\k).&\mlabel{eq:8.14k}
 \end{eqnarray}

 Then Eq.(\ref{eq:8.14d}) can be obtained by the following calculation.
 \begin{eqnarray*}
 \k\a(a)
 &\stackrel{(\mref{eq:1.5})}=&\b(\b^{-1}(\k)\b^{-1}(a1))
 \stackrel{(\mref{eq:8.14e})}=\b\xi(a\o 1)\\
 &\stackrel{(\mref{eq:8.14})}=&\a\xi(1\otimes a)
 \stackrel{(\mref{eq:8.14k})}=\a(\a^{-1}(1a)\a^{-1}(\k)))\stackrel{(\mref{eq:1.5})}=\b(a)\k,
 \end{eqnarray*}
 finishing the proof.
 \end{proof}

 \begin{rmk}\mlabel{rmk:8.1aa} Let $\a=\b=\id$ in Theorem \ref{thm:8.1a}, then we can get \cite[Proposition 2.1]{Br2}. In this case, Eq.(\ref{eq:8.14d}) implies that $\k$ lies in the center of $A$.
 \end{rmk}

 \subsection{Double curved weak BiHom-pseudotwistor} The notions of a twistor and more generally a pseudotwistor were introduced by L\'{o}pez Pe\~{n}a, Panaite and Van Oystaeyen \cite{LPPVO} to provide a framework unifying some of the deformed or twisted algebras. The more general concept of a weak pseudotwistor was introduced by Panaite and Van Oystaeyen in \cite{PVO}, then the curved version was studied in \cite{Br2}. Here we provide the double curved (BiHom-)version.

 \begin{defi}\mlabel{de:8.2} Let $(A,\mu,\alpha,\beta)$ be a BiHom-associative algebra. A $K$-linear map $T:A\otimes A\lr  A\otimes A$ is called a {\bf double curved weak BiHom-pseudotwistor} if there exist $K$-linear maps $\mathfrak{T}:A\otimes A\otimes A\lr  A\otimes A\otimes A$ and $\xi,\zeta:A\otimes A\lr  A$, such that the following conditions hold
 \begin{eqnarray}
 &\mu(\alpha\o \zeta)=\mu(\xi\o \beta),&\mlabel{eq:8.2}\\
 &T(\alpha\otimes\mu T)=(\alpha\otimes\mu)\mathfrak{T}-\alpha\otimes\zeta,&\mlabel{eq:8.3}\\
 &T(\mu T\otimes\beta)=(\mu\otimes\beta)\mathfrak{T}-\xi\otimes\beta.&\mlabel{eq:8.4}
 \end{eqnarray}
 The map $\mathfrak{T}$ is called a {\bf weak BiHom-companion of} $T$ and $\xi,\zeta$ are called {\bf curvatures of} $T$.
 \end{defi}

 \begin{rmk}\mlabel{rmk:8.8} Let $\xi=\zeta$ in Definition \mref{de:8.2}, then $T$ is called a {\bf curved weak BiHom-pseudotwistor}. And if furthermore $\alpha=\beta=\id$, then we can obtain curved weak pseudotwistor introduced in \cite[Definition 2.2]{Br2}.
 \end{rmk}

 \begin{pro}\mlabel{pro:8.3} Let $(A,\mu,\alpha,\beta)$ be a BiHom-associative algebra and $T:A\otimes A\lr  A\otimes A$ a double curved weak BiHom-pseudotwistor with companion $\mathfrak{T}$ and curvatures $\xi, \zeta$. Then $(A, \mu T, \alpha,\beta)$ is a BiHom-associative algebra.
 \end{pro}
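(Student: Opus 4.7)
The plan is to introduce the new product $a\bullet b:=\mu T(a\otimes b)$ and verify that $(A,\bullet,\alpha,\beta)$ satisfies the three axioms of a BiHom-associative algebra. The relation $\alpha\circ\beta=\beta\circ\alpha$ is inherited immediately from the original structure, and the multiplicativity of $\alpha$ and $\beta$ with respect to $\bullet$ follows from the corresponding multiplicativity for $\mu$ together with the implicit compatibility of $T$ with $\alpha\otimes\alpha$ and $\beta\otimes\beta$ that is built into the BiHom-pseudotwistor convention. The substantive task is therefore the BiHom-associativity identity
\[
\mu T\bigl(\mu T(a\otimes b)\otimes\beta(c)\bigr)=\mu T\bigl(\alpha(a)\otimes\mu T(b\otimes c)\bigr)\qquad\text{for all }a,b,c\in A.
\]

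The strategy is to reduce both sides to a common expression built from $\mu$, $\mathfrak{T}$, and a curvature term. Applying $\mu$ to Eq.(\ref{eq:8.3}) and Eq.(\ref{eq:8.4}) yields the identities
\[
\mu T(\alpha\otimes\mu T)=\mu(\alpha\otimes\mu)\mathfrak{T}-\mu(\alpha\otimes\zeta),\qquad \mu T(\mu T\otimes\beta)=\mu(\mu\otimes\beta)\mathfrak{T}-\mu(\xi\otimes\beta),
\]
so each of the two sides of the associativity identity splits into a ``main term'' built from $\mu$ and $\mathfrak{T}$ and a ``curvature correction'' built from $\xi$ or $\zeta$.

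To finish, I would match the two pieces separately. The main terms coincide when evaluated on $\mathfrak{T}(a\otimes b\otimes c)$ thanks to the BiHom-associativity of the ambient product, Eq.(\ref{eq:1.3}), which is precisely the assertion $\mu(\alpha\otimes\mu)=\mu(\mu\otimes\beta)$ as maps $A^{\otimes 3}\to A$. The two curvature corrections agree by Eq.(\ref{eq:8.2}), which is tailor-made to say $\mu(\alpha\otimes\zeta)=\mu(\xi\otimes\beta)$. Subtracting the matched curvature pieces from the matched main pieces yields the required equality.

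There is no genuine obstacle; the argument is a clean bookkeeping reassembly of the three defining identities of a double curved weak BiHom-pseudotwistor into the single identity demanded by BiHom-associativity. The only conceptual point worth flagging is that Eq.(\ref{eq:8.2}) is exactly the compatibility needed to neutralize the two distinct curvatures $\xi$ and $\zeta$ — in the uncurved, non-BiHom case it is automatic, and in the curved single-curvature case $\xi=\zeta$ it reduces to the compatibility already identified in Lemma~\ref{lem:8.1}(1).
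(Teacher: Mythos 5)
Your proposal is correct and is essentially the paper's own proof: the paper likewise applies $\mu$ to Eqs.~(\ref{eq:8.3}) and (\ref{eq:8.4}), identifies the two $\mathfrak{T}$-terms via the ambient BiHom-associativity (\ref{eq:1.3}), and cancels the curvature corrections via (\ref{eq:8.2}). Your side remark that compatibility of $T$ with $\alpha\otimes\alpha$ and $\beta\otimes\beta$ is implicitly assumed is also consistent with the paper, which leaves Eq.~(\ref{eq:1.2}) for $\mu T$ unaddressed as well.
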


 \begin{proof} We calculate BiHom-associativity as follows:
 \begin{eqnarray*}
 \mu T (\alpha \otimes\mu  T)
 &\stackrel{(\mref{eq:8.3})}=&\mu(\alpha\otimes\mu)\mathfrak{T}-\mu(\alpha\otimes\zeta)\\
 &\stackrel{(\mref{eq:8.2})(\mref{eq:1.3})}=&\mu(\mu\otimes\beta)\mathfrak{T}-\mu(\xi\o \beta)\stackrel{(\mref{eq:8.4})}=\mu T(\mu T\otimes\beta),
 \end{eqnarray*}
 as required.
 \end{proof}

 \begin{rmk}\mlabel{rmk:8.9} Let $\xi=\zeta$ and $\alpha=\beta=\id$ in Proposition \mref{pro:8.3}, then the associative algebra can be obtained by curved weak pseudotwistor introduced in \cite[Lemma 2.3]{Br2}.
 \end{rmk}

 \begin{thm}\mlabel{thm:8.4} Let $(A,R,S,\alpha,\beta,\xi,\zeta)$ be a double curved Rota-Baxter BiHom-system that satisfies Eq.(\mref{eq:8.2}), and define, for all $a,b,c\in A$
 \begin{eqnarray}
 &T(a\otimes b)=R(a)\otimes b+a\otimes S(b) &\mlabel{eq:8.6}
  \end{eqnarray}
 and
 \begin{eqnarray}
 &\mathfrak{T}(a\otimes b\otimes c)=R(a)\otimes R(b)\otimes c+ R(a)\otimes b\otimes S(c)+ a\otimes S(b)\otimes S(c).&\mlabel{eq:8.7}
 \end{eqnarray}
 Then $T$ is a double curved BiHom-pseudotwistor with the weak companion $\mathfrak{T}$ and curvatures $\xi,\zeta$.
 \end{thm}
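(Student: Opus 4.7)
The plan is to check directly the three identities (\ref{eq:8.2})--(\ref{eq:8.4}) in Definition \ref{de:8.2} from the given data. The first of these, $\mu(\alpha\otimes\zeta)=\mu(\xi\otimes\beta)$, is imposed as a hypothesis of the theorem, so nothing needs to be proved for it. The remaining two identities are bilinear-in-expansion equalities that should fall out by direct substitution of the definitions (\ref{eq:8.6})--(\ref{eq:8.7}) into both sides and an application of the Rota--Baxter system axioms (\ref{eq:4.77})--(\ref{eq:4.78}), together with the commutation relations (\ref{eq:4.76}) between $R,S$ and $\alpha,\beta$.

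For (\ref{eq:8.3}), the first step is to compute $\mu T(b\otimes c)=R(b)c+bS(c)$ and then apply $T$ to $\alpha(a)\otimes(R(b)c+bS(c))$, which splits into four tensor terms. The outer $S$-factor $S(R(b)c+bS(c))$ is rewritten using (\ref{eq:4.78}) as $S(b)S(c)-\zeta(b\otimes c)$, and the $R\alpha=\alpha R$ relation from (\ref{eq:4.76}) is used on the left-hand tensor factor. Collecting terms produces $\alpha R(a)\otimes R(b)c+\alpha R(a)\otimes bS(c)+\alpha(a)\otimes S(b)S(c)-\alpha(a)\otimes\zeta(b\otimes c)$, which is precisely $(\alpha\otimes\mu)\mathfrak{T}(a\otimes b\otimes c)-\alpha\otimes\zeta$ read off from (\ref{eq:8.7}).

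The verification of (\ref{eq:8.4}) is parallel. I would expand $T(\mu T(a\otimes b)\otimes\beta(c))=T((R(a)b+aS(b))\otimes\beta(c))$ into four tensor terms, use (\ref{eq:4.77}) to rewrite the outer $R(R(a)b+aS(b))$ as $R(a)R(b)-\xi(a\otimes b)$, and apply $\beta S=S\beta$ from (\ref{eq:4.76}) to the right-hand tensor factor. After rearrangement the resulting expression matches $(\mu\otimes\beta)\mathfrak{T}(a\otimes b\otimes c)-\xi\otimes\beta$.

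No genuine obstacle is expected, since the proof is bookkeeping once the Rota--Baxter identities are substituted. The only minor care point is to make sure one applies (\ref{eq:4.76}) on the correct tensor slot so that the terms line up with the asymmetric definition of $\mathfrak{T}$ in (\ref{eq:8.7}); the curvature terms $\alpha\otimes\zeta$ and $\xi\otimes\beta$ then appear as the $-\zeta$ and $-\xi$ summands produced by (\ref{eq:4.77})--(\ref{eq:4.78}).
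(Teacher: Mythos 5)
Your proposal is correct and follows essentially the same route as the paper's proof: both verify Eqs.~(\eqref{eq:8.3}) and (\eqref{eq:8.4}) by direct expansion of $T(\alpha\otimes\mu T)$ and $T(\mu T\otimes\beta)$ into four tensor terms, absorb $S(R(b)c)+S(bS(c))$ and $R(R(a)b)+R(aS(b))$ via Eqs.~(\eqref{eq:4.78}) and (\eqref{eq:4.77}) to produce the $-\,\alpha\otimes\zeta$ and $-\,\xi\otimes\beta$ curvature terms, and use the commutation relations (\eqref{eq:4.76}) on the appropriate tensor slot, with Eq.~(\eqref{eq:8.2}) taken as a standing hypothesis. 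The paper likewise writes out only the first identity and notes the second is obtained in a similar way, exactly as in your parallel verification.
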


 \begin{proof} For all $a,b,c\in A$, we compute 
 \begin{eqnarray*}
 T(\alpha\otimes\mu T)(a\otimes b\otimes c) 
 &\stackrel{(\mref{eq:8.6})}=&R(\alpha(a))\otimes R(b)c+\alpha(a)\otimes S(R(b)c) +R(\alpha(a))\otimes bS(c)\\
 &&+\alpha(a) \otimes S(bS(c))\\
 &\stackrel{(\mref{eq:4.78})}=&\alpha R(a)\otimes R(b)c+\alpha R(a)\otimes bS(c)+\alpha(a)\otimes S(b)S(c)\\
 &&-\alpha(a) \otimes \zeta(b\otimes c)\\ 
 &\stackrel{(\mref{eq:8.7})}=&((\alpha\otimes\mu)\mathfrak{T}-\alpha\otimes\zeta)(a\otimes b\otimes c).
 \end{eqnarray*}
 In a similar way we can get
 \begin{eqnarray*}
 &&T(\mu T\otimes\beta)(a\otimes b\otimes c) 
 =((\mu\otimes\beta)\mathfrak{T}-\xi\otimes\beta)(a\otimes b\otimes c).
 \end{eqnarray*}
 Therefore we finish the proof.
 \end{proof}

 \begin{rmk}\mlabel{rmk:8.10} Let $\xi=\zeta$ and $\alpha=\beta=\id$ in Theorem \mref{thm:8.4}, then curved pseudotwistor can be obtained by curved Rota-Baxter system introduced in \cite[Lemma 2.4]{Br2}.
 \end{rmk}

 \subsection{Unital quasitriangular covariant BiHom-bialgebra}\label{se:tq} In this subsection, we mainly provide a construction of nonhomogeneous abhYBe through unitary quasitriangular covariant BiHom-bialgebra. Here we really present the BiHom-type (including four structure maps $\a, \b, \psi, \om$) of quasitriangular infinitesimal bialgebra, which is a generalization of Liu, Makhlouf, Menini, Panaite's BiHom-type in \cite[Section 5]{LMMP3}.

 \begin{pro}\mlabel{pro:2.1A} Let $(A, \mu, \alpha_{A}, \beta_{A})$ be a BiHom-associative algebra and $(M, \triangleright, \triangleleft, \alpha_{M}, \beta_{M})$, $(N, \triangleright, \triangleleft,$ $\alpha_{N}, \beta_{N})$, $(V, \triangleright, \triangleleft, \alpha_{V}, \beta_{V})$ be $(A, \mu, \alpha_{A}, \beta_{A})$-bimodules. $\psi_{A}, \omega_{A}: A\lr A$ are linear maps such that for all $a,b\in A,$ $\psi_{A}(ab)=\psi_{A}(a)\psi_{A}(b)$, $\omega_{A}(ab)=\omega_{A}(a)\omega_{A}(b)$ and any two of the maps $\alpha_{A}$, $\beta_{A}$, $\psi_{A}$, $\omega_{A}$ commute. We consider the following left and the right action of $A$ on $M\otimes N\otimes V$, for all $a\in A, m\in M, n\in N, v\in V$
 \begin{eqnarray*}
 &a\triangleright(m\otimes n\otimes v)=\omega_{A}(a)\triangleright m\otimes\beta_{N}(n)\otimes\beta_{V}(v),&\mlabel{eq:2.1A}\\
 &(m\otimes n\otimes v)\triangleleft a=\alpha_{M}(m)\otimes \alpha_{N}(n)\otimes v\triangleleft\psi_{A}(a).&\mlabel{eq:2.2A}
 \end{eqnarray*}
 Then $(M\otimes N\otimes V, \triangleright, \triangleleft, \alpha_{M}\otimes\alpha_{N}\otimes\alpha_{V}, \beta_{M}\otimes\beta_{N}\otimes\beta_{V})$ is an $(A,\mu,\alpha_{A},\beta_{A})$-bimodule.
 \end{pro}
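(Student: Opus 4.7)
The plan is to verify the four bimodule axioms (Eqs.~(\ref{eq:1.13})--(\ref{eq:1.16})) for the triple tensor product component by component, exploiting that the left action is only nontrivial on the $M$-slot (carrying $\omega_A$-shifts) and the right action is only nontrivial on the $V$-slot (carrying $\psi_A$-shifts). The commuting and multiplicative hypotheses on $\alpha_A,\beta_A,\psi_A,\omega_A$ are exactly what is needed to transport the $A$-module axioms from $M$ and $V$ individually to the tensor product.

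First I would check the compatibility with the structure maps $\alpha_M\otimes\alpha_N\otimes\alpha_V$ and $\beta_M\otimes\beta_N\otimes\beta_V$. On the left action slot, the needed equality
\[
(\alpha_M\otimes\alpha_N\otimes\alpha_V)(a\triangleright(m\otimes n\otimes v))
=\alpha_A(a)\triangleright (\alpha_M\otimes\alpha_N\otimes\alpha_V)(m\otimes n\otimes v)
\]
reduces to $\alpha_M(\omega_A(a)\triangleright m)=\alpha_A\omega_A(a)\triangleright\alpha_M(m)$, which follows from Eq.~(\ref{eq:1.13}) for the $A$-module $M$ together with $\alpha_A\omega_A=\omega_A\alpha_A$; the other two slots are handled by naturality of $\alpha_N,\alpha_V$ with themselves. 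The $\beta$-compatibility is analogous, and so is the right action compatibility via $\psi_A$.

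Next, for the left module axiom (Eq.~(\ref{eq:1.15})) I would expand
\[
\alpha_A(a)\triangleright(b\triangleright(m\otimes n\otimes v))
=\omega_A\alpha_A(a)\triangleright(\omega_A(b)\triangleright m)\otimes \beta_N^{2}(n)\otimes\beta_V^{2}(v),
\]
and compare with
\[
(ab)\triangleright(\beta_M(m)\otimes\beta_N(n)\otimes\beta_V(v))
=\omega_A(ab)\triangleright\beta_M(m)\otimes\beta_N^{2}(n)\otimes\beta_V^{2}(v).
\]
The multiplicativity $\omega_A(ab)=\omega_A(a)\omega_A(b)$ and the commutation $\alpha_A\omega_A=\omega_A\alpha_A$ turn the first slot into $\alpha_A\omega_A(a)\triangleright(\omega_A(b)\triangleright m)=(\omega_A(a)\omega_A(b))\triangleright\beta_M(m)$, which is precisely Eq.~(\ref{eq:1.15}) applied to $M$. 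The right module axiom is verified in symmetric fashion on the $V$-slot, using multiplicativity of $\psi_A$ and $\beta_A\psi_A=\psi_A\beta_A$.

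Finally, the bimodule compatibility Eq.~(\ref{eq:1.16}) is the cleanest: since $\triangleright$ only modifies the $M$-slot (with $\omega_A(a)$) while placing $\beta_N,\beta_V$ on the other two, and $\triangleleft$ only modifies the $V$-slot (with $\psi_A(b)$) while placing $\alpha_M,\alpha_N$, both sides of
\[
\alpha_A(a)\triangleright((m\otimes n\otimes v)\triangleleft b)=(a\triangleright(m\otimes n\otimes v))\triangleleft\beta_A(b)
\]
collapse to $\omega_A\alpha_A(a)\triangleright\alpha_M(m)\otimes\alpha_N\beta_N(n)\otimes\beta_V(v)\triangleleft\psi_A\beta_A(b)$ after using pairwise commutation of $\alpha_A,\beta_A,\psi_A,\omega_A$ and Eqs.~(\ref{eq:1.13}) for $M$ and $V$. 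No nontrivial bimodule axiom on $M$ or $V$ is invoked here because the two actions touch disjoint tensor factors; the only obstacle is bookkeeping of the four commuting maps, which the hypothesis handles uniformly. I expect the entire proof to be a careful but routine diagram chase, the main care being to keep track of the order of $\alpha$'s, $\beta$'s, $\psi$'s, $\omega$'s on each tensor factor.
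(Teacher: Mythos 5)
Your proposal is correct and follows essentially the same route as the paper: a slot-by-slot verification, with the bimodule compatibility Eq.~(\mref{eq:1.16}) reducing, exactly as in the paper's displayed computation, to Eq.~(\mref{eq:1.13}) on the $M$- and $V$-slots together with pairwise commutation of $\alpha_{A},\beta_{A},\psi_{A},\omega_{A}$. The paper only writes out this compatibility step and declares the remaining axioms similar; your expansion of those remaining checks (using multiplicativity of $\omega_{A}$, $\psi_{A}$ and the commutation $\alpha_{N}\beta_{N}=\beta_{N}\alpha_{N}$, etc., from the module definition) is the intended routine bookkeeping.
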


 \begin{proof} 
 We only check the compatible condition between left and right action as follows, the rest can be proved similarly.
 \begin{eqnarray*}
 \alpha_{A}(a)\triangleright((m\otimes n\otimes v)\triangleleft b) 
 &=&\omega_{A}\alpha_{A}(a)\triangleright \alpha_{M}(m)\otimes\alpha_{N}\beta_{N}(n)\otimes\beta_{V}(v)\triangleleft \psi_{A}\beta_{A}(b)\\
 &\stackrel{(\mref{eq:1.13})}=&\alpha_{M}(\omega_{A}(a)\triangleright m)\otimes\alpha_{N}\beta_{N}(n)\otimes \beta_{V}(v)\triangleleft \psi_{A}\beta_{A}(b)\\ 
 &=&(a\triangleright(m\otimes n\otimes v))\triangleleft\beta_{A}(b),
 \end{eqnarray*}
 finishing the proof. 
 \end{proof}

 \begin{rmk}\mlabel{rmk:2.2} (1) As the special cases of Proposition \mref{pro:2.1A}, we have the following actions of $A$ on $A\otimes A$ and $A\otimes A\otimes A$ by
 \begin{eqnarray}
 &a\triangleright(x\otimes y)=\omega(a)x\otimes\beta(y),&\mlabel{eq:2.3}\\
 &(x\otimes y)\triangleleft a=\alpha(x)\otimes y\psi(a),&\mlabel{eq:2.4}
 \end{eqnarray}
 and
 \begin{eqnarray}
 &a\triangleright(x\otimes y\otimes z)=\omega(a)x\otimes\beta(y)\otimes\beta(z),&\mlabel{eq:2.5}\\
 &(x\otimes y\otimes z)\triangleleft a=\alpha(x)\otimes\alpha(y)\otimes z\psi(a).&\mlabel{eq:2.6}
 \end{eqnarray}
 respectively.

 (2) The actions defined in Eqs.(\mref{eq:2.3}) and (\mref{eq:2.4}) coincide with the actions in \cite[Lemma 4.2]{LMMP3}.
 \end{rmk}

 \begin{defi}\mlabel{de:2.3} Let $(A,\mu,\alpha,\beta)$ be a BiHom-associative algebra and $\psi,\omega: A\lr A$ be linear maps, then $\delta: A\lr A\otimes A$ 
 is a {\bf BiHom-derivation} if
 \begin{eqnarray}
 &(\alpha\otimes\alpha)\circ\delta=\delta\circ\alpha, ~~(\beta\otimes\beta)\circ\delta=\delta\circ\beta,
 ~~(\psi\otimes\psi)\circ\delta=\delta\circ\psi,
 ~~(\omega\otimes\omega)\circ\delta=\delta\circ\omega,&\mlabel{eq:2.7}\\
 &\delta(ab)=a\triangleright\delta(b)+\delta(a)\triangleleft b.&\mlabel{eq:2.8}
 \end{eqnarray} 
 \end{defi}

 \begin{defi}\mlabel{de:2.5} \cite{MY} A {\bf covariant BiHom-bialgebra} is a 9-tuple $(A, \mu, \delta_{1}, \delta_{2},$ $\Delta, \alpha, \beta, \psi, \omega)$ where $\d_i: A\lr A\o A, i=1,2$ are two BiHom-derivations such that

 (1) $(A, \mu, \alpha, \beta)$ is a BiHom-associative algebra,

 (2) $(A, \Delta, \psi, \omega)$ is a BiHom-coassociative coalgebra,

 (3) $\Delta$ is covariant BiHom-derivation with respect to BiHom-derivations $\delta_{1},\delta_{2}$, i.e.,
 \begin{eqnarray*}
 \Delta\circ\mu
 &=&\triangleleft\circ(\Delta\otimes \id)+\triangleright\circ(\id\otimes \delta_{1})\\
 &=&\triangleright\circ(\id\otimes\Delta)+\triangleleft\circ(\delta_{2}\otimes \id)
 \end{eqnarray*}
 and moreover the following relations are satisfied
\begin{eqnarray}
 &\alpha\circ\psi=\psi\circ\alpha,~~\alpha\circ\omega=\omega\circ\alpha,~~ \beta\circ\psi=\psi\circ\beta,~~\beta\circ\omega=\omega\circ\beta,&\mlabel{eq:2.12}\\
 &(\alpha\otimes\alpha)\circ\Delta=\Delta\circ\alpha,~~ (\beta\otimes\beta)\circ\Delta=\Delta\circ\beta,&\mlabel{eq:2.13}\\
 &\psi\circ\mu=\mu\circ(\psi\otimes \psi),~~\omega\circ\mu=\mu\circ(\omega\otimes \omega).&\mlabel{eq:2.14}
 \end{eqnarray}

 If $(A,\mu,\alpha,\beta)$ is unitary with unit $1_{A}$ such that
 \begin{eqnarray}
 &\psi(1_{A})=1_{A}\hbox{~and~}\omega(1_{A})=1_{A},&\mlabel{eq:2.16}
 \end{eqnarray} then a covariant BiHom-bialgebra  $(A,\mu,\delta_{1},\delta_{2},\Delta,\alpha,\beta,\psi,\omega)$ is said to be {\bf unitary} if $\Delta(1_A)=1_A\otimes 1_A$ holds.
 \end{defi}

 \begin{rmk} (1) Let $\alpha=\beta=\psi=\omega=\id$, then we can get covariant bialgebra  introduced in \cite{Br}.

 (2) Let $\delta_1=\delta_2=\Delta$ be BiHom-derivations. Then we obtain the BiHom-type of infinitesimal bialgebra introduced in \cite{LMMP3}, i.e., an {\bf infinitesimal BiHom-bialgebra} is a 7-tuple $(A, \mu, \Delta, \alpha, \beta, \psi, \omega)$, with the property that $(A, \mu, \alpha, \beta)$ is a BiHom-associative algebra, $(A, \Delta, \psi,\omega)$ is a BiHom-coassociative coalgebra, Eqs.(\mref{eq:2.12})-(\mref{eq:2.14}) and the following condition are satisfied
 \begin{eqnarray*}
 \Delta \circ \mu=(\mu\o \beta)\circ (\om \o \Delta)+(\alpha\o \mu)\circ(\Delta\o \psi). 
 \end{eqnarray*}
 
 \end{rmk}

 Below we give an equivalent characterization for covariant BiHom-bialgebra.

 \begin{thm}\mlabel{thm:2.9} Let $(A,\mu,\alpha,\beta)$ be a unitary BiHom-associative algebra (where $\alpha,\beta$ are bijective), and $\psi,\omega: A\lr A$ be two linear maps, and $\alpha,\beta,\psi,\omega $ satisfy Eqs.(\mref{eq:2.12})-(\mref{eq:2.16}). Let $\delta_{1},\delta_{2}:A\lr A\otimes A$ be two BiHom-derivations, $\Delta:A\lr A\otimes A$ be a linear map such that Eq.(\ref{eq:1.7}) holds. Then $\Delta$ is a coassociative covariant BiHom-derivation with respect to $(\delta_{1},\delta_{2})$ if and only if there exists an element $u=u^{1}\otimes u^{2}\in A\otimes A$ such that for all $a\in A$,
 \begin{eqnarray}
 &(\delta_{1}-\delta_{2})(a)=\alpha^{-1}(a)\triangleright u-u\triangleleft\beta^{-1}(a),&\mlabel{eq:2.27}\\
 &(\delta_{1}\otimes\psi-\omega\otimes\delta_{1})\circ\delta_{1}(a)=
 u_{23}\delta_{1}(a)_{13},&\mlabel{eq:2.28}\\
 &(\delta_{1}\otimes\psi-\omega\otimes\delta_{1})(u)=u_{23}u_{13}-u_{12}u_{23}.&\mlabel{eq:2.29}
 \end{eqnarray}
 In this case,
 \begin{eqnarray}
 \Delta(a)&=&\alpha(u^{1})\otimes u^{2}\psi\beta^{-1}(a)+\delta_{1}(a),\nonumber\\
 &=&\omega\alpha^{-1}(a)u^{1}\otimes\beta(u^{2})+\delta_{2}(a).\mlabel{eq:2.30}
 \end{eqnarray}
 \end{thm}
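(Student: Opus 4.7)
The strategy is to use the unit $1_A$ to pin down the element $u$ and then convert coassociativity of $\Delta$ into the algebraic identities (\ref{eq:2.28}) and (\ref{eq:2.29}). For the ``only if'' direction I take $u:=\Delta(1_A)$ and derive (\ref{eq:2.30}) by evaluating each of the two covariant derivation identities at the unit; for the converse I define $\Delta$ via the first line of (\ref{eq:2.30}) and verify all required properties directly. Both directions hinge on the same expansion of coassociativity, read in opposite senses.

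For the ``only if'' direction, I would first observe that $\delta_1(1_A)=\delta_2(1_A)=0$: applying each BiHom-derivation to $1_A\cdot 1_A=1_A$, and using $\psi(1_A)=\omega(1_A)=1_A$, makes both $1_A\triangleright\delta_i(1_A)$ and $\delta_i(1_A)\triangleleft 1_A$ equal to $\delta_i(1_A)$, which forces $\delta_i(1_A)=0$. Setting $u:=\Delta(1_A)$ and substituting $a=1_A$ in $\Delta(ab)=\Delta(a)\triangleleft b+a\triangleright\delta_1(b)$, then replacing $b$ by $\beta^{-1}(a)$, yields the first form of (\ref{eq:2.30}). The symmetric substitution $b=1_A$ in the second covariant derivation identity, with $a$ replaced by $\alpha^{-1}(a)$, gives the second form. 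Subtracting the two forms produces (\ref{eq:2.27}). Next I feed $\Delta(a)=\alpha(u^1)\otimes u^2\psi\beta^{-1}(a)+\delta_1(a)$ into the coassociativity identity $(\Delta\otimes\psi)\Delta(a)=(\omega\otimes\Delta)\Delta(a)$ and expand. Using the BiHom-derivation rule for $\delta_1$ on the internal products $u^2\psi\beta^{-1}(a)$ and $\omega\alpha^{-1}(a)u^1$ that emerge when the inner $\Delta$ is unfolded, the identity splits by degree in $\delta_1(a)$: the $\delta_1$-linear piece reassembles exactly to (\ref{eq:2.28}) and the $u$-quadratic piece to (\ref{eq:2.29}), in the notation $r_{ij}s_{kl}$ fixed just before Definition~\ref{de:4.34}.

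For the converse, I define $\Delta$ by the first line of (\ref{eq:2.30}); the second line then follows from (\ref{eq:2.27}). The first covariant derivation identity $\Delta(ab)=\Delta(a)\triangleleft b+a\triangleright\delta_1(b)$ is a direct calculation using BiHom-associativity (\ref{eq:1.3}) and the BiHom-derivation property of $\delta_1$; the $\delta_2$-form then follows by rewriting via (\ref{eq:2.27}). Compatibility of $\Delta$ with $\psi,\omega$ in (\ref{eq:1.7}) is routine, given the compatibility of $\delta_1$ and the $(\psi\otimes\psi)$- and $(\omega\otimes\omega)$-invariance of $u$ (which is itself forced by evaluating (\ref{eq:2.27})--(\ref{eq:2.29}) at $a=1_A$ and applying the tensor powers of $\psi$ and $\omega$). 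Finally, coassociativity is verified by running the expansion of the previous paragraph in reverse, using (\ref{eq:2.28}) to identify the $\delta_1$-linear terms and (\ref{eq:2.29}) for the $u$-quadratic ones.

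The main obstacle will be the bookkeeping in the coassociativity expansion: organizing the full triple-tensor expression so that the emerging terms fit cleanly against $u_{23}u_{13}-u_{12}u_{23}$ and $u_{23}\delta_1(a)_{13}$ in the notation of Section~\ref{se:yb}, and tracking how $\alpha,\beta,\psi,\omega$ permute through the BiHom-derivation identity and the three actions defined in Remark~\ref{rmk:2.2}. Once the expansion is cleanly organized, the two directions of the equivalence become mirror images of each other.
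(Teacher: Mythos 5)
Your proposal is correct and follows essentially the same route as the paper's proof: set $u=\Delta(1_A)$, read off \eqref{eq:2.27} and \eqref{eq:2.30} by evaluating the two covariant-derivation identities at the unit, then expand coassociativity, extracting \eqref{eq:2.29} at $a=1_A$ (your observation $\delta_i(1_A)=0$ is precisely what the paper uses implicitly there) and \eqref{eq:2.28} for general $a$, with the converse obtained by running the expansion in reverse. The only slip is your parenthetical claim that the $(\psi\otimes\psi)$- and $(\omega\otimes\omega)$-invariance of $u$ is forced by evaluating \eqref{eq:2.27}--\eqref{eq:2.29} at $a=1_A$ --- that evaluation only yields $(\alpha\otimes\alpha)(u)=(\beta\otimes\beta)(u)$; the invariance actually comes from the standing hypothesis \eqref{eq:1.7} applied to $u=\Delta(1_A)$ together with \eqref{eq:2.16}, so since \eqref{eq:1.7} is assumed rather than proved, your argument is unharmed.
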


 \begin{proof} Let $\Delta:A\lr A\otimes A$ be a linear map and set $u=\Delta(1)\ (=1_{1}\otimes 1_{2}=\bar{1}_{1}\otimes \bar{1}_{2})$ and write $\delta_{i}(a)=a^{i}_{[1]}\otimes a^{i}_{[2]}, i=1,2$.

 $(\Longrightarrow)$ Assume that $\Delta$ is a covariant BiHom-derivation, then, for all $ a\in A$, by Eqs.(\mref{eq:1.5}), (\mref{eq:2.7}) and (\mref{eq:2.16}), we have
 \begin{eqnarray*}
 \Delta(a)&=&\alpha(1_{1})\otimes 1_{2}\psi\beta^{-1}(a)+a^{1}_{[1]}\otimes a^{1}_{[2]}\\
 &=&\omega\alpha^{-1}(a)1_{1}\otimes\beta(1_{2})+a^{2}_{[1]}\otimes a^{2}_{[2]},
 \end{eqnarray*}
 which is exactly Eqs.(\mref{eq:2.27}) and (\mref{eq:2.30}). While
 \begin{eqnarray*}
 (\omega\otimes\Delta)\Delta(a) 
 &\stackrel{(\mref{eq:2.30})}=&\omega\alpha(1_{1})\otimes\alpha(\bar{1}_{1})
 \otimes\bar{1}_{2}\psi\beta^{-1}(1_{2}\psi\beta^{-1}(a))\\
 &&+\omega\alpha(1_{1})\otimes(1_{2}\psi\beta^{-1}(a))^{1}_{[1]}
 \otimes(1_{2}\psi\beta^{-1}(a))^{1}_{[2]}\\
 &&+\omega(a^{1}_{[1]})\otimes\alpha(1_{1})\otimes 1_{2}\psi\beta^{-1}(a^{1}_{[2]})\\
 &&+\omega(a^{1}_{[1]})\otimes(a^{1}_{[2]})^{1}_{[1]}\otimes (a^{1}_{[2]})^{1}_{[2]}\\
 &\stackrel{(\mref{eq:2.8})(\mref{eq:2.14})(\mref{eq:1.2})}=&\omega\alpha(1_{1})\otimes\alpha(\bar{1}_{1})
 \otimes\bar{1}_{2}(\psi\beta^{-1}(1_{2})\psi^{2}\beta^{-2}(a))\\
 &&+\omega\alpha(1_{1})\otimes\omega(1_{2})\psi\beta^{-1}(a)^{1}_{[1]}
 \otimes\beta(\psi\beta^{-1}(a)^{1}_{[2]})\\
 &&+\omega\alpha(1_{1})\otimes\alpha((1_{2})^{1}_{[1]})\otimes (1_{2})^{1}_{[2]}\psi^{2}\beta^{-1}(a)\\
 &&+\omega(a^{1}_{[1]})\otimes\alpha(1_{1})\otimes 1_{2}\psi\beta^{-1}(a^{1}_{[2]})\\
 &&+\omega(a^{1}_{[1]})\otimes(a^{1}_{[2]})^{1}_{[1]}\otimes(a^{1}_{[2]})^{1}_{[2]}
 \end{eqnarray*}
 and
 \begin{eqnarray*}
 (\Delta\otimes\psi)\Delta(a)&\stackrel{(\mref{eq:2.30})(\mref{eq:2.14})}=& 
 \alpha(\bar{1}_{1})\otimes \bar{1}_{2}\psi\beta^{-1}\alpha(1_{1})\otimes\psi(1_{2})\psi^{2}\beta^{-1}(a)\\
 &&+\alpha(1_{1})^{1}_{[1]}\otimes\alpha(1_{1})^{1}_{[2]}\otimes\psi(1_{2})\psi^{2}\beta^{-1}(a)\\
 &&+\alpha(1_{1})\otimes 1_{2}\psi\beta^{-1}(a^{1}_{[1]})\otimes\psi(a^{1}_{[2]})\\
 &&+(a^{1}_{[1]})^{1}_{[1]}\otimes(a^{1}_{[1]})^{1}_{[2]}\otimes\psi(a^{1}_{[2]}).
 \end{eqnarray*}
 Then by the BiHom-coassociative of $\Delta$ at $a=1$, and %s.(\mref{eq:1.4}),
 Eq.(\mref{eq:2.16}) we can get
 \begin{eqnarray*}
 &&\omega\alpha(1_{1})\otimes\alpha(\bar{1}_{1})
 \otimes\bar{1}_{2}(\psi\beta^{-1}(1_{2})\cdot1)
 +\omega\alpha(1_{1})\otimes\omega(1_{2})1^{1}_{[1]}
 \otimes\beta(1^{1}_{[2]})\\
 &&+\omega\alpha(1_{1})\otimes\alpha((1_{2})^{1}_{[1]})\otimes (1_{2})^{1}_{[2]}\cdot1+\omega(1^{1}_{[1]})\otimes\alpha(1_{1})\otimes 1_{2}\psi\beta^{-1}(1^{1}_{[2]})\\
 &&+\omega(1^{1}_{[1]})\otimes(1^{1}_{[2]})^{1}_{[1]}\otimes(1^{1}_{[2]})^{1}_{[2]}\\
 &=&\alpha(\bar{1}_{1})\otimes \bar{1}_{2}\psi\beta^{-1}\alpha(1_{1})\otimes\psi(1_{2})\cdot1
 +\alpha(1_{1})^{1}_{[1]}\otimes\alpha(1_{1})^{1}_{[2]}\otimes\psi(1_{2})\cdot1\\
 &&+\alpha(1_{1})\otimes 1_{2}\psi\beta^{-1}(1^{1}_{[1]})\otimes\psi(1^{1}_{[2]})
 +(1^{1}_{[1]})^{1}_{[1]}\otimes(1^{1}_{[1]})^{1}_{[2]}\otimes\psi(1^{1}_{[2]}).
 \end{eqnarray*}
 Based on the properties of BiHom-derivations and covariant BiHom-derivations, one can obtain
 \begin{eqnarray*}
 &&(1_{1})^{1}_{[1]}\otimes(1_{1})^{1}_{[2]}\otimes\psi(1_{2})
 -\omega(1_{1})\otimes(1_{2})^{1}_{[1]}\otimes (1_{2})^{1}_{[2]}\\
 &=&\omega(1_{1})\otimes\alpha(\bar{1}_{1})
 \otimes\bar{1}_{2}\psi\beta^{-1}(1_{2})-\alpha(\bar{1}_{1})\otimes \bar{1}_{2}\beta^{-1}(1_{1})\otimes(1_{2}),
 \end{eqnarray*}
 which is equivalent to Eq.(\mref{eq:2.29}).

 Finally, by the BiHom-coassociativity for $\Delta$ at any $a\in A$, one has
 \begin{eqnarray*}
 &&\omega\alpha(1_{1})\otimes\alpha(\bar{1}_{1})
 \otimes\bar{1}_{2}(\psi\beta^{-1}(1_{2})\psi^{2}\beta^{-2}(a))
 +\omega\alpha(1_{1})\otimes\alpha((1_{2})^{1}_{[1]})\otimes (1_{2})^{1}_{[2]}\psi^{2}\beta^{-1}(a)\\
 &&+\omega(a^{1}_{[1]})\otimes\alpha(1_{1})\otimes 1_{2}\psi\beta^{-1}(a^{1}_{[2]})
 +\omega(a^{1}_{[1]})\otimes(a^{1}_{[2]})^{1}_{[1]}\otimes(a^{1}_{[2]})^{1}_{[2]}\\
 &=&\alpha(\bar{1}_{1})\otimes \bar{1}_{2}\psi\beta^{-1}\alpha(1_{1})\otimes\psi(1_{2})\psi^{2}\beta^{-1}(a)
 +\alpha(1_{1})^{1}_{[1]}\otimes\alpha(1_{1})^{1}_{[2]}\otimes\psi(1_{2})\psi^{2}\beta^{-1}(a)\\
 &&+(a^{1}_{[1]})^{1}_{[1]}\otimes(a^{1}_{[1]})^{1}_{[2]}\otimes\psi(a^{1}_{[2]}).
 \end{eqnarray*}
 Again by the properties of BiHom-derivations, covariant BiHom-derivations and Eq.(\mref{eq:2.29}), we find that the condition of BiHom-coassociativity for $\Delta$ at any $a\in A$ is exactly Eq.(\mref{eq:2.28}).

 $(\Longleftarrow)$ By Eq.(\mref{eq:2.30}), we have
 \begin{eqnarray*}
 \Delta(a)=\alpha(1_{1})\otimes 1_{2}\psi\beta^{-1}(a)+\delta_{1}(a)=\omega\alpha^{-1}(a)1_{1}\otimes\beta(1_{2})+\delta_{2}(a),
 \quad\forall a\in A.
 \end{eqnarray*}
 So
 \begin{eqnarray}
 &&\Delta(a)\triangleleft b=\omega(a)\alpha(1_{1})\otimes \beta(1_{2})\psi(b)+\delta_{2}(a)\triangleleft b,\mlabel{eq:2.31}\\
 &&a\triangleright\Delta(b)=\omega(a)\alpha(1_{1})\otimes
 \beta(1_{2})\psi(b)+a\triangleright\delta_{1}(b).\mlabel{eq:2.32}
 \end{eqnarray}
 By Eqs.(\mref{eq:2.31}) and (\mref{eq:2.32}), then
 \begin{eqnarray*}
 \Delta(a)\triangleleft b+a\triangleright\delta_{1}(b)=a\triangleright\Delta(b)+\delta_{2}(a)\triangleleft b.
 \end{eqnarray*}
 While
 \begin{eqnarray*}
 \Delta(ab)&=&\omega\alpha^{-1}(ab)1_{1}\otimes\beta(1_{2})+\delta_{2}(ab)\\
 &\stackrel{(\mref{eq:2.8})}=&\omega\alpha^{-1}(ab)1_{1}\otimes\beta(1_{2})
 +a\triangleright\delta_{2}(b)+\delta_{2}(a)\triangleleft b
 \end{eqnarray*}
 and
 \begin{eqnarray*}
 a\triangleright\Delta(b)&=&\omega(a)(\omega\alpha^{-1}(b)1_{1})\otimes\beta^{2}(1_{2})
 +a\triangleright\delta_{2}(b)\\
 &\stackrel{(\mref{eq:1.3})}=&(\alpha^{-1}\omega(a)\omega\alpha^{-1}(b))\beta(1_{1})\otimes\beta^{2}(1_{2})
 +a\triangleright\delta_{2}(b)\\
 &\stackrel{(\mref{eq:1.2})(\mref{eq:2.14})}=&\omega\alpha^{-1}(ab)1_{1}\otimes\beta(1_{2})
 +a\triangleright\delta_{2}(b).
 \end{eqnarray*}
 Thus, $\Delta(ab)=\Delta(a)\triangleleft b+a\triangleright\delta_{1}(b)=a\triangleright\Delta(b)+\delta_{2}(a)\triangleleft b$, $\Delta$ is a covariant BiHom-derivation  with respect to $(\delta_{1},\delta_{2})$.

 By Eqs.(\mref{eq:2.28}) and (\mref{eq:2.29}), and the properties of BiHom-derivations and covariant BiHom-derivations, we have
 \begin{eqnarray*}
 &&\omega\alpha(1_{1})\otimes\alpha(\bar{1}_{1})
 \otimes\bar{1}_{2}(\psi\beta^{-1}(1_{2})\psi^{2}\beta^{-2}(a))
 +\omega\alpha(1_{1})\otimes\omega(1_{2})\psi\beta^{-1}(a)^{1}_{[1]}
 \otimes\beta(\psi\beta^{-1}(a)^{1}_{[2]})\\
 &&+\omega\alpha(1_{1})\otimes\alpha((1_{2})^{1}_{[1]})\otimes (1_{2})^{1}_{[2]}\psi^{2}\beta^{-1}(a)
 +\omega(a^{1}_{[1]})\otimes\alpha(1_{1})\otimes 1_{2}\psi\beta^{-1}(a^{1}_{[2]})\\
 &&+\omega(a^{1}_{[1]})\otimes(a^{1}_{[2]})^{1}_{[1]}\otimes(a^{1}_{[2]})^{1}_{[2]}\\
 &=&\alpha(\bar{1}_{1})\otimes \bar{1}_{2}\psi\beta^{-1}\alpha(1_{1})\otimes\psi(1_{2})\psi^{2}\beta^{-1}(a)
 +\alpha(1_{1})^{1}_{[1]}\otimes\alpha(1_{1})^{1}_{[2]}\otimes\psi(1_{2})\psi^{2}\beta^{-1}(a)\\
 &&+\alpha(1_{1})\otimes 1_{2}\psi\beta^{-1}(a^{1}_{[1]})\otimes\psi(a^{1}_{[2]})
 +(a^{1}_{[1]})^{1}_{[1]}\otimes(a^{1}_{[1]})^{1}_{[2]}\otimes\psi(a^{1}_{[2]}),
 \end{eqnarray*}
 which is exactly the condition of BiHom-coassociativity for $\Delta$ at any $a\in A$.
  \end{proof}

 \begin{rmk}\mlabel{rmk:2.9A} Let $\psi=\beta,\omega=\alpha$ in Theorem \mref{thm:2.9}, then we can recover \cite[Theorem 3.5]{MY}. If furthermore $\a=\b=\id$, then one gets Part (1) in \cite[Proposition 3.10]{Br}.
 \end{rmk}

 \begin{lem}\mlabel{lem:2.7} Let $(A,\mu,\alpha,\beta)$ be a BiHom-associative algebra such that $\alpha,\beta$ are bijective, $\psi,\omega:A\lr A$ be linear maps, $r, s \in A\otimes A$ be $\alpha,\beta,\psi,\omega$-invariant and moreover Eqs.(\mref{eq:2.12}) and (\mref{eq:2.14}) hold. Define the linear maps
 \begin{eqnarray}
 &\delta_{r}:A\lr A\otimes A,~~ \delta_{r}(a)=\alpha^{-1}(a)\triangleright r-r\triangleleft\beta^{-1}(a),&\mlabel{eq:2.17}\\
 &\delta_{s}:A\lr A\otimes A, ~~\delta_{s}(a)=\alpha^{-1}(a)\triangleright s-s\triangleleft\beta^{-1}(a),&\mlabel{eq:2.18}\\
 &\Delta:A\lr A\otimes A, ~~\Delta(a)=\alpha^{-1}(a)\triangleright r-s\triangleleft\beta^{-1}(a).&\mlabel{eq:2.19}
 \end{eqnarray}
 Then $\delta_{r}, \delta_{s}$ are BiHom-derivations and $\Delta$ is covariant BiHom-derivation with respect to BiHom-derivations $\delta_{r},\delta_{s}$.
 \end{lem}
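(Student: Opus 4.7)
The plan is a direct computation using the explicit formulas for the bimodule actions recorded in Remark \ref{rmk:2.2}, together with four recurring simplifications: (i) the invariance hypothesis $(\alpha\otimes\alpha)(r)=(\beta\otimes\beta)(r)=(\psi\otimes\psi)(r)=(\omega\otimes\omega)(r)=r$ (and similarly for $s$), which lets us absorb/insert a factor of $\alpha,\beta,\psi,\omega$ on either leg of $r$ or $s$; (ii) the multiplicativity of $\alpha,\beta,\psi,\omega$ coming from Eqs.(\ref{eq:1.2}) and (\ref{eq:2.14}); (iii) the commutation among $\alpha,\beta,\psi,\omega$ from Eq.(\ref{eq:2.12}); (iv) the BiHom-associativity $\alpha(x)(yz)=(xy)\beta(z)$ of Eq.(\ref{eq:1.3}).

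I would first verify Eq.(\ref{eq:2.7}) for $\delta_r$ and $\delta_s$. Writing, for instance,
$\delta_r(a)=\omega\alpha^{-1}(a)r^{1}\otimes\beta(r^{2})-\alpha(r^{1})\otimes r^{2}\psi\beta^{-1}(a)$, the compatibility $(\alpha\otimes\alpha)\circ\delta_r=\delta_r\circ\alpha$ reduces to pushing $\alpha$ through $r$, where $\alpha$-invariance of $r$ immediately gives the required equality; the other three structure maps behave the same way because they are all multiplicative and commute with $\alpha$ and $\beta$. Then I would check the Leibniz identity Eq.(\ref{eq:2.8}) for $\delta_r$: expand $\delta_r(a)\triangleleft b$ and $a\triangleright\delta_r(b)$ using Eqs.(\ref{eq:2.3})-(\ref{eq:2.4}), observe that the ``cross-terms'' $-\omega(a)\alpha(r^{1})\otimes\beta(r^{2}\psi\beta^{-1}(b))$ and $\alpha(\omega\alpha^{-1}(a)r^{1})\otimes\beta(r^{2})\psi(b)$ are equal by multiplicativity of $\alpha,\beta$ and commutation $\alpha\omega=\omega\alpha$, $\beta\psi=\psi\beta$, and so cancel; the remaining two ``main'' terms are matched to $\delta_r(ab)$ after a single application of Eq.(\ref{eq:1.3}) followed by one use of $\alpha$- or $\beta$-invariance of $r$. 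The same computation proves the claim for $\delta_s$.

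Next I would establish the two halves of the covariance condition for $\Delta$: namely $\Delta(ab)=\Delta(a)\triangleleft b+a\triangleright\delta_{r}(b)$ and $\Delta(ab)=a\triangleright\Delta(b)+\delta_{s}(a)\triangleleft b$. Because $\Delta$ is the ``mixed'' expression $\alpha^{-1}(a)\triangleright r-s\triangleleft\beta^{-1}(a)$, one of the two $r$-terms in $a\triangleright\delta_r(b)$ exactly cancels one of the two terms in $\Delta(a)\triangleleft b$ (via the same cross-term identity used above), and likewise on the $s$-side. What is left are one $r$-summand and one $s$-summand, which collapse via Eq.(\ref{eq:1.3}) and multiplicativity of $\omega,\psi$ to $\omega\alpha^{-1}(ab)r^{1}\otimes\beta(r^{2})$ and $\alpha(s^{1})\otimes s^{2}\psi\beta^{-1}(ab)$, giving exactly $\Delta(ab)$. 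The dual identity is verified symmetrically.

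The main obstacle is purely bookkeeping: one must track how $\alpha,\beta,\psi,\omega$ migrate through the two tensor factors and keep precise control of which invariance of $r$ or $s$ is being used at each step, so that the four pairs of cross-terms cancel in the correct order and the surviving two terms can be recombined by BiHom-associativity. No conceptual difficulty beyond this appears, since the hypotheses already encode exactly the commutativity, multiplicativity, and invariance needed to carry each reduction through.
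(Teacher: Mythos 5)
Your proposal is correct and takes essentially the same route as the paper's proof: expand $\delta_r(a)\triangleleft b+a\triangleright\delta_r(b)$ (and the two covariance identities for $\Delta$) via the actions of Eqs.(\ref{eq:2.3})--(\ref{eq:2.4}), cancel the cross-terms using multiplicativity and the commutations of Eq.(\ref{eq:2.12}), and recombine the surviving terms through Eq.(\ref{eq:1.3}), Eq.(\ref{eq:2.14}) and the $\alpha,\beta,\psi,\omega$-invariance of $r$ and $s$. The only difference is that you also verify Eq.(\ref{eq:2.7}) explicitly, which the paper leaves implicit.
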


 \begin{proof} For all $a, b\in A$, we have
 \begin{eqnarray*}
 a\triangleright\delta_{r}(b)+\delta_{r}(a)\triangleleft b 
 &\stackrel{(\mref{eq:2.3})(\mref{eq:2.4})}=&\omega(a)(\omega\alpha^{-1}(b)r^{1})\otimes
 \beta^{2}(r^{2})-\omega(a)\alpha(r^{1})\otimes \beta(r^{2})\psi(b)\\
 &&+\omega(a)\alpha(r^{1})\otimes\beta(r^{2})\psi(b)-\alpha^{2}(r^{1})\otimes (r^{2}\psi\beta^{-1}(a))\psi(b)\\ 
 &\stackrel{(\mref{eq:1.3})(\mref{eq:2.14})}=&\omega\alpha^{-1}(ab)r^{1}\otimes
 \beta(r^{2})-\alpha(r^{1})\otimes r^{2}\psi\beta^{-1}(ab)=\delta_{r}(ab).
 \end{eqnarray*}
 Then $\d_r$ is a BiHom-derivation. Similary, $\d_s$ is a BiHom-derivation. That $\Delta$ is covariant BiHom-derivation with respect to BiHom-derivations $\delta_{r},\delta_{s}$ can be verified as follows. For all $a,b\in A$, we have
 \begin{eqnarray*}
 \Delta(a)\triangleleft b+a\triangleright\delta_{r}(b) 
 &\stackrel{(\mref{eq:2.3})(\mref{eq:2.4})}=&\omega(a)\alpha (r^{1})\otimes\beta(r^{2})\psi(b)-\alpha^{2}(s^{1})\otimes (s^{2}\psi\beta^{-1}(a))\psi(b)\\
 &&+\omega(a)(\omega\alpha^{-1}(b)r^{1})\otimes\beta^{2}(r^{2})-\omega(a)\alpha(r^{1})\otimes \beta(r^{2})\psi(b)\\
 &=&\omega(a)(\omega\alpha^{-1}(b)r^{1})\otimes\beta^{2}(r^{2})-\alpha^{2}(s^{1})\otimes (s^{2}\psi\beta^{-1}(a))\psi(b)\\
 &\stackrel{(\mref{eq:1.3})}=&(\alpha^{-1}\omega(a)\omega\alpha^{-1}(b))\beta(r^{1})
 \otimes\beta^{2}(r^{2})\\
 &&-\alpha^{2}(s^{1})\otimes \alpha(s^{2})(\psi\beta^{-1}(a)\psi\beta^{-1}(b))\\
 &\stackrel{(\mref{eq:2.14})}=&\omega\alpha^{-1}(ab)\beta(r^{1})
 \otimes\beta^{2}(r^{2})-\alpha^{2}(s^{1})\otimes \alpha(s^{2})\psi\beta^{-1}(ab)=\Delta(ab)
 \end{eqnarray*}
 and likewise
 \begin{eqnarray*}
 a\triangleright\Delta(b)+\delta_{s}(a)\triangleleft b 
 =\Delta(ab),
 \end{eqnarray*}
 finishing the proof.
 \end{proof}

 \begin{pro}\mlabel{pro:2.11} Let $(A,\mu,\alpha,\beta)$ be a BiHom-assciative algebra such that $\alpha,\beta$ are bijective, and $\psi,\omega: A\lr A$ be linear maps, $r,s \in A\otimes A$ be $\alpha, \beta, \psi, \omega$-invariant, and moreover Eqs.(\mref{eq:2.12}) and (\mref{eq:2.14}) hold. Then $(A, \mu, \delta_{r}, \delta_{s}, \Delta, \alpha, \beta, \psi, \omega)$ is a covariant BiHom-bialgebra if and only if, for all $a\in A$,
 \begin{eqnarray}
 &\omega\alpha^{-1}(a)\triangleright(r_{13}r_{12}-r_{12}r_{23}+s_{23}r_{13})
 =(s_{13}r_{12}-s_{12}s_{23}+s_{23}s_{13})\triangleleft \psi\beta^{-1}(a).&\mlabel{eq:2.36}
 \end{eqnarray}
 \end{pro}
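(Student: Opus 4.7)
By Lemma~\ref{lem:2.7}, $\delta_r$ and $\delta_s$ are already BiHom-derivations and $\Delta$ is a covariant BiHom-derivation with respect to them. The compatibility conditions of $\Delta$ with $\alpha,\beta,\psi,\omega$ (Eqs.~\eqref{eq:1.7} and \eqref{eq:2.13}) follow immediately from the $\alpha,\beta,\psi,\omega$-invariance of $r,s$ together with Eqs.~\eqref{eq:2.12} and \eqref{eq:2.14}. Hence for $(A,\mu,\delta_r,\delta_s,\Delta,\alpha,\beta,\psi,\omega)$ to be a covariant BiHom-bialgebra the only condition still to be checked is the BiHom-coassociativity Eq.~\eqref{eq:1.9}, namely $(\Delta\otimes\psi)\circ\Delta=(\omega\otimes\Delta)\circ\Delta$. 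The goal of the proof is therefore to show that this single identity is equivalent to Eq.~\eqref{eq:2.36}.

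The main calculation is to substitute the explicit formula $\Delta(a)=\omega\alpha^{-1}(a)r^{1}\otimes\beta(r^{2})-\alpha(s^{1})\otimes s^{2}\psi\beta^{-1}(a)$ into each side and apply $\Delta$ once more in the appropriate tensor slot by means of the covariant BiHom-derivation identity $\Delta(xy)=\Delta(x)\triangleleft y+x\triangleright\delta_r(y)=x\triangleright\Delta(y)+\delta_s(x)\triangleleft y$. Each side then expands into four summands; the single ``pure $a$'' summand (in which $\Delta$ acts only on the $a$-factor and produces $r\otimes s$ terms with no action on $r$ or $s$) is common to both sides and cancels. The remaining six summands, after repeated use of BiHom-associativity \eqref{eq:1.3}, the multiplicativity of $\psi,\omega$ (via Eq.~\eqref{eq:2.14}), and the $\alpha,\beta,\psi,\omega$-invariance of $r,s$, regroup exactly into the three terms on each side of Eq.~\eqref{eq:2.36}: the three $r$-heavy and $s$-heavy summands coming from $(\omega\otimes\Delta)\Delta$ assemble into $\omega\alpha^{-1}(a)\triangleright(r_{13}r_{12}-r_{12}r_{23}+s_{23}r_{13})$ via the left action \eqref{eq:2.5}, while those coming from $(\Delta\otimes\psi)\Delta$ assemble into $(s_{13}r_{12}-s_{12}s_{23}+s_{23}s_{13})\triangleleft\psi\beta^{-1}(a)$ via the right action \eqref{eq:2.6}. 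Since $\alpha,\beta$ are bijective and $a\in A$ is arbitrary, the coassociativity identity holds for all $a$ if and only if Eq.~\eqref{eq:2.36} holds, yielding both directions at once.

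The main obstacle is purely combinatorial: identifying which rebracketing of $\alpha^{-1}$'s, $\beta$'s, $\psi$'s, and $\omega$'s produced by the successive covariant-derivation expansions reproduces exactly the twist patterns in the Section~\ref{se:yb} definitions of $r_{12}s_{23}=\alpha(r^{1})\otimes r^{2}s^{1}\otimes\beta(s^{2})$, $r_{13}s_{12}=\omega(r^{1})s^{1}\otimes\beta(s^{2})\otimes\alpha\psi(r^{2})$, and $r_{23}s_{13}=\beta\omega(s^{1})\otimes\alpha(r^{1})\otimes r^{2}\psi(s^{2})$. Here the commuting hypotheses on the structure maps and the invariance of $r,s$ are used repeatedly to migrate $\alpha$'s, $\beta$'s, $\psi$'s, and $\omega$'s across the tensor factors until each summand matches one of the six indexed products exactly. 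Once this slot-by-slot identification is in place, the equivalence with Eq.~\eqref{eq:2.36} is transparent.
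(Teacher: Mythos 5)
Your plan is correct and follows essentially the same route as the paper: invoke Lemma~\ref{lem:2.7} to reduce everything to BiHom-coassociativity of $\Delta$, expand $(\omega\otimes\Delta)\Delta(a)$ and $(\Delta\otimes\psi)\Delta(a)$ via the explicit formula \eqref{eq:2.19} into four summands each, cancel the one common summand (the $s$--$a$--$r$ sandwich term $-\alpha(s^{1})\otimes(\alpha^{-1}(s^{2})\alpha^{-1}\beta^{-1}\psi\omega(a))r^{1}\otimes\beta(r^{2})$, which appears identically on both sides), and regroup the remaining three summands per side into $\omega\alpha^{-1}(a)\triangleright(r_{13}r_{12}-r_{12}r_{23}+s_{23}r_{13})$ and $(s_{13}r_{12}-s_{12}s_{23}+s_{23}s_{13})\triangleleft\psi\beta^{-1}(a)$ using the actions \eqref{eq:2.5}--\eqref{eq:2.6}, invariance of $r,s$, and Eqs.~\eqref{eq:1.3}, \eqref{eq:2.14}. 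The paper executes exactly this computation, so your outline matches its proof step for step.
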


 \begin{proof}According to Lemma \mref{lem:2.7}, we can obtain that $\delta_{r}, \delta_{s}$ are BiHom-derivations and $\Delta$ is covariant BiHom-derivation with respect to $(\delta_{r}, \delta_{s})$. For all $a\in A$ and $r=\bar{r}, s=\bar{s}$, one can compute,
 \begin{eqnarray*}
 (\omega\otimes\Delta)\circ\Delta(a) 
 &\stackrel{(\mref{eq:2.19})}=&\omega^{2}\alpha^{-1}(a)\omega(r^{1})\otimes\alpha^{-1}\beta\omega(r^{2})\bar{r}^{1}\otimes\beta(\bar{r}^{2})\\
 &&-\omega^{2}\alpha^{-1}(a)\omega(r^{1})\otimes\alpha(s^{1})\otimes s^{2}\psi(r^{2})\\
 &&-\omega\alpha(s^{1})\otimes(\omega\alpha^{-1}(s^{2})\alpha^{-1}\beta^{-1}\psi\omega(a))
 r^{1}\otimes\beta(r^{2})\\
 &&+\omega\alpha(s^{1})\otimes\alpha(\bar{s}^{1})\otimes \bar{s}^{2}(\psi\beta^{-1}(s^{2})\psi^{2}\beta^{-2}(a))\\
 &\stackrel{(\mref{eq:1.3})}=&\omega^{2}\alpha^{-1}(a)r^{1}\otimes\alpha^{-1}\beta(r^{2})\bar{r}^{1}\otimes\beta(\bar{r}^{2})\\
 &&-\omega^{2}\alpha^{-1}(a)\omega(r^{1})\otimes\alpha(s^{1})\otimes s^{2}\psi(r^{2})\\
 &&-\alpha(s^{1})\otimes(\alpha^{-1}(s^{2})\alpha^{-1}\beta^{-1}\psi\omega(a))
 r^{1}\otimes\beta(r^{2})\\
 &&+\omega\alpha(s^{1})\otimes\alpha(\bar{s}^{1})\otimes (\alpha^{-1}(\bar{s}^{2})\psi\beta^{-1}(s^{2}))\psi^{2}\beta^{-1}(a),
 \end{eqnarray*}
 and likewise
 \begin{eqnarray*}
 (\Delta\otimes\psi)\circ\Delta(a) 
 &\stackrel{ }=&(\omega^{2}\alpha^{-2}(a)\omega\alpha^{-1}(r^{1}))\bar{r}^{1}
 \otimes\beta(\bar{r}^{2})\otimes\beta\psi(r^{2})\\
 &&-\alpha(s^{1})\otimes (\alpha^{-1}(s^{2})\alpha^{-1}\beta^{-1}\psi\omega(a))r^{1}\otimes\beta(r^{2})\\
 &&-\omega(s^{1})r^{1}\otimes\beta(r^{2})\otimes\psi(s^{2})\psi^{2}\beta^{-1}(a)\\
 &&+\alpha(\bar{s}^{1})\otimes \bar{s}^{2}\alpha\beta^{-1}\psi(s^{1})\otimes\psi(s^{2})\psi^{2}\beta^{-1}(a).
 \end{eqnarray*}
 Then $\Delta$ is BiHom-coassociative if and only if 
 \begin{eqnarray*}
 &&\omega^{2}\alpha^{-1}(a)(\omega(r^{1})\bar{r}^{1})
 \otimes\beta^{2}(\bar{r}^{2})\otimes\alpha\beta\psi(r^{2})
 -\omega^{2}\alpha^{-1}(a)\alpha(r^{1})\otimes\beta(r^{2})\beta(\bar{r}^{1})\otimes\beta^{2}(\bar{r}^{2})\\
 &&+\omega^{2}\alpha^{-1}(a)\beta\omega(r^{1})\otimes\alpha\beta(s^{1})\otimes \beta(s^{2})\beta\psi(r^{2})
 =\alpha\omega(s^{1})\alpha(r^{1})\otimes\alpha\beta(r^{2})\otimes\alpha\psi(s^{2})\psi^{2}\beta^{-1}(a)\\
 &&-\alpha^{2}(\bar{s}^{1})\otimes \alpha(\bar{s}^{2})\alpha(s^{1})\otimes\beta(s^{2})\psi^{2}\beta^{-1}(a)+\alpha\beta\omega(s^{1})\otimes\alpha^{2}(\bar{s}^{1})\otimes (\bar{s}^{2}\psi(s^{2}))\psi^{2}\beta^{-1}(a).
 \end{eqnarray*}
 and this is exactly Eq.(\mref{eq:2.36}).  These finish the proof.
 \end{proof}

 \begin{defi}\mlabel{de:2.13} Under the assumption of Proposition \mref{pro:2.11}, if $(r,s)$ is an abhYBp of weight $(0,0)$ in $(A,\mu,\alpha,\beta)$, then by Proposition \mref{pro:2.11}, $(A, \mu, \delta_{r}, \delta_{s}, \Delta, \alpha, \beta, \psi, \omega)$ is a covariant BiHom-bialgebra. In this case $(A, \mu, \delta_{r}, \delta_{s}, \Delta, \alpha, \beta, \psi, \omega)$ is called a {\bf quasitriangular covariant BiHom-bialgebra}.
 \end{defi}

 \begin{cor}\mlabel{cor:2.11a} Let $(A,\mu,\alpha,\beta)$ be a BiHom-assciative algebra such that $\alpha, \beta$ are bijective, and $\psi,\omega: A\lr A$ be linear maps, $r\in A\otimes A$ be $\alpha, \beta, \psi, \omega$-invariant, and moreover Eqs.(\mref{eq:2.12}) and (\mref{eq:2.14}) hold. Then $(A, \mu, \delta_{r}=\delta_{s}=\Delta, \alpha, \beta, \psi, \omega)$ is an infinitesimal BiHom-bialgebra if and only if, for all $a\in A$,
 \begin{eqnarray}
 &\omega\alpha^{-1}(a)\triangleright(r_{13}r_{12}-r_{12}r_{23}+r_{23}r_{13})
 =(r_{13}r_{12}-r_{12}r_{23}+r_{23}r_{13})\triangleleft \psi\beta^{-1}(a).&\mlabel{eq:2.36a}
 \end{eqnarray}
 \end{cor}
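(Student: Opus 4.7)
The plan is to obtain Corollary \ref{cor:2.11a} as a direct specialization of Proposition \ref{pro:2.11} to the case $s=r$. The point is that an infinitesimal BiHom-bialgebra is, by the remark following Definition \ref{de:2.5}, nothing other than a covariant BiHom-bialgebra whose two BiHom-derivations coincide with the comultiplication, i.e.\ $\delta_1=\delta_2=\Delta$. So the corollary will follow once we verify that setting $s=r$ in the hypotheses of Proposition \ref{pro:2.11} produces exactly this coincidence, and that condition \eqref{eq:2.36} reduces to \eqref{eq:2.36a}.

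The first step is to note that putting $s=r$ in the definitions \eqref{eq:2.17}, \eqref{eq:2.18}, \eqref{eq:2.19} gives
$$
\delta_r(a)=\alpha^{-1}(a)\triangleright r-r\triangleleft\beta^{-1}(a)=\delta_s(a)=\Delta(a),
$$
so the three maps are literally equal as linear maps $A\to A\o A$. Lemma \ref{lem:2.7} (applied with $s=r$) then tells us that $\delta_r=\delta_s=\Delta$ is a BiHom-derivation and that $\Delta$ is a covariant BiHom-derivation with respect to itself, which is the compatibility required for an infinitesimal BiHom-bialgebra.

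The second step is to check that Proposition \ref{pro:2.11} specializes correctly. With $s=r$, the left-hand side of \eqref{eq:2.36} becomes $\omega\alpha^{-1}(a)\triangleright(r_{13}r_{12}-r_{12}r_{23}+r_{23}r_{13})$ and the right-hand side becomes $(r_{13}r_{12}-r_{12}r_{23}+r_{23}r_{13})\triangleleft\psi\beta^{-1}(a)$, which is exactly \eqref{eq:2.36a}. Hence Proposition \ref{pro:2.11} yields that $(A,\mu,\Delta,\Delta,\Delta,\alpha,\beta,\psi,\omega)$ is a covariant BiHom-bialgebra if and only if \eqref{eq:2.36a} holds, and by the identification above this is the same as saying $(A,\mu,\Delta,\alpha,\beta,\psi,\omega)$ is an infinitesimal BiHom-bialgebra.

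There is no genuine obstacle here since the corollary is a clean specialization; the only thing to be careful about is making sure that the $\alpha,\beta,\psi,\omega$-invariance of $r$ (in place of the pair $(r,s)$) together with Eqs.\eqref{eq:2.12} and \eqref{eq:2.14} is exactly what Proposition \ref{pro:2.11} needs when $s=r$, which is immediate. Thus the entire proof reduces to a one-line invocation: \emph{apply Proposition \ref{pro:2.11} with $s=r$ and use the remark after Definition \ref{de:2.5}.}
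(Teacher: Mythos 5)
Your proposal is correct and coincides with the paper's own proof, which simply sets $r=s$ in Proposition \ref{pro:2.11}; your additional verifications (that $\delta_r=\delta_s=\Delta$ under this specialization and that the covariant BiHom-bialgebra with coinciding derivations is exactly an infinitesimal BiHom-bialgebra per the remark after Definition \ref{de:2.5}) are just the details the paper leaves implicit.
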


 \begin{proof} Let $r=s$ in Proposition \mref{pro:2.11}.
 \end{proof}

 \begin{defi}\mlabel{de:2.14} Under the assumption of Corollary \mref{cor:2.11a}, if $r$ is a solution to abhYBe of weight $0$ in $(A,\mu,\alpha,\beta)$, then by Corollary \mref{cor:2.11a}, we know $(A, \mu, \delta_{r}=\delta_{s}=\Delta, \alpha, \beta, \psi, \omega)$ is a covariant BiHom-bialgebra. In this case $(A, \mu, \Delta, \alpha, \beta, \psi, \omega)$ is the BiHom-type of quasitriangular infinitesimal bialgebra introduced in \cite{Ag99}, called {\bf quasitriangular infinitesimal BiHom-bialgebra}.
 \end{defi}

 \begin{rmk}\mlabel{rmk:2.14aa} (1) We note here the definition of quasitriangular infinitesimal BiHom-bialgebra is different from the one introduced by Liu, Makhlouf, Menini, Panaite in \cite[Definition 5.7]{LMMP3}. And if $\psi=\b, \om=\a$ and by $r$ is $\psi, \om$-invariant, then Definition \mref{de:2.14} can cover \cite[Definition 5.7]{LMMP3}.

 (2) Definition \mref{de:2.14} is really the BiHom-type of quasitriangular infinitesimal bialgebra since it contains four structure maps $\a, \b, \psi, \om$.
 \end{rmk}

 \begin{thm}\mlabel{thm:2.15} Let $(A,\mu,\alpha,\beta)$ be a BiHom-associative algebra,  $\psi, \om: A\lr A$ two linear maps such that any two of $\a, \b, \psi, \om$ commute,  $r,s\in A\otimes A$ be $\a, \b, \psi, \om$-invariant, and $\delta_{r}, \delta_{s}, \Delta$ be defined by Eqs.(\mref{eq:2.17})-(\mref{eq:2.19}). Then $(A, \mu, \delta_{r}, \delta_{s}, \Delta, \alpha, \beta, \psi, \omega)$ is a quasitriangular covariant BiHom-bialgebra if and only if
 \begin{eqnarray}
 &(\omega\otimes \Delta)(r)=r_{13}r_{12},&\mlabel{eq:2.39}\\
 &(\Delta\otimes\psi)(s)=-s_{23}s_{13}.&\mlabel{eq:2.40}
 \end{eqnarray}
 \end{thm}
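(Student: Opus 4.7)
The plan is to reduce both \eqref{eq:2.39} and \eqref{eq:2.40} to the two defining equations of an abhYBp of weight $(0,0)$, after which the equivalence with ``quasitriangular covariant BiHom-bialgebra'' is immediate. First, I would invoke Lemma \mref{lem:2.7}, Proposition \mref{pro:2.11} and Definition \mref{de:2.13}: since $\delta_{r}$ and $\delta_{s}$ are automatically BiHom-derivations and $\Delta$ is a covariant BiHom-derivation with respect to $(\delta_{r},\delta_{s})$, the system $(A,\mu,\delta_{r},\delta_{s},\Delta,\alpha,\beta,\psi,\omega)$ is a quasitriangular covariant BiHom-bialgebra precisely when $(r,s)$ satisfies the abhYBp relations $r_{13}r_{12}-r_{12}r_{23}+s_{23}r_{13}=0$ and $s_{13}r_{12}-s_{12}s_{23}+s_{23}s_{13}=0$; indeed, once these hold, Eq.\eqref{eq:2.36} of Proposition \mref{pro:2.11} is trivially fulfilled.

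Next, using \eqref{eq:2.3}--\eqref{eq:2.4}, I rewrite $\Delta$ concretely as $\Delta(a)=\omega\alpha^{-1}(a)r^{1}\otimes\beta(r^{2})-\alpha(s^{1})\otimes s^{2}\psi\beta^{-1}(a)$, so that
\begin{eqnarray*}
(\omega\otimes\Delta)(r)&=&\omega(r^{1})\otimes\omega\alpha^{-1}(r^{2})\bar{r}^{1}\otimes\beta(\bar{r}^{2})\\
&&-\omega(r^{1})\otimes\alpha(s^{1})\otimes s^{2}\psi\beta^{-1}(r^{2}).
\end{eqnarray*}
The key tool is the ``transfer rule'' $(G_{1}F\otimes G_{2})(r)=(G_{1}\otimes G_{2}F^{-1})(r)$ for $F\in\{\alpha,\beta,\psi,\omega\}$, which follows from the $F$-invariance of $r$ together with the commutativity hypotheses on $\alpha,\beta,\psi,\omega$. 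Applied to the first summand, it converts $\omega(r^{1})\otimes\omega\alpha^{-1}(r^{2})$ into $\alpha(r^{1})\otimes r^{2}$, producing $r_{12}r_{23}$; applied to the second, it converts $\omega(r^{1})\otimes\psi\beta^{-1}(r^{2})$ into $\beta\omega(r^{1})\otimes\psi(r^{2})$, producing $s_{23}r_{13}$. Hence $(\omega\otimes\Delta)(r)=r_{12}r_{23}-s_{23}r_{13}$, so \eqref{eq:2.39} is equivalent to the first abhYBp relation.

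For \eqref{eq:2.40} my plan is symmetric: expand
\begin{eqnarray*}
(\Delta\otimes\psi)(s)&=&\omega\alpha^{-1}(s^{1})r^{1}\otimes\beta(r^{2})\otimes\psi(s^{2})\\
&&-\alpha(\bar{s}^{1})\otimes\bar{s}^{2}\psi\beta^{-1}(s^{1})\otimes\psi(s^{2}),
\end{eqnarray*}
and apply the transfer rule to $s$. The first summand becomes $s_{13}r_{12}$ via $\omega\alpha^{-1}(s^{1})\otimes\psi(s^{2})=\omega(s^{1})\otimes\alpha\psi(s^{2})$, while the second becomes $s_{12}s_{23}$ via $\psi\beta^{-1}(s^{1})\otimes\psi(s^{2})=s^{1}\otimes\beta(s^{2})$. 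Thus $(\Delta\otimes\psi)(s)=s_{13}r_{12}-s_{12}s_{23}$, and \eqref{eq:2.40} is equivalent to the second abhYBp relation. Combined with the first paragraph this would complete the proof.

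I expect the main obstacle to be the tensor bookkeeping rather than any conceptual difficulty: the structure maps $\alpha,\beta,\psi,\omega$ accumulate in both tensor slots of each summand, and the chain of invariance transfers must be sequenced carefully so that the residual maps left in each slot match exactly the prescribed decorations appearing in $r_{12}r_{23}$, $s_{23}r_{13}$, $s_{13}r_{12}$, $s_{12}s_{23}$. The assumed pairwise commutativity of $\alpha,\beta,\psi,\omega$ is precisely what makes these transfer chains valid and consistent.
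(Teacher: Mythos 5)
Your proposal is correct and follows essentially the same route as the paper's own proof: both expand $(\omega\otimes\Delta)(r)$ and $(\Delta\otimes\psi)(s)$ via Eq.~\eqref{eq:2.19}, use the $\alpha,\beta,\psi,\omega$-invariance of $r,s$ to transfer structure maps between tensor slots, arrive at $(\omega\otimes\Delta)(r)=r_{12}r_{23}-s_{23}r_{13}$ and $(\Delta\otimes\psi)(s)=s_{13}r_{12}-s_{12}s_{23}$, and then identify Eqs.~\eqref{eq:2.39}--\eqref{eq:2.40} with the weight-$(0,0)$ abhYBp relations \eqref{eq:4.53}--\eqref{eq:4.54}, which is the defining condition of quasitriangularity via Definition~\ref{de:2.13}. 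Your intermediate transfer computations all check out, so no gap remains.
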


 \begin{proof} By the definition of $\mu$ in Eq.(\mref{eq:2.19}), one easily checks that
 \begin{eqnarray*}
 (\omega\otimes \Delta)(r) 
 &\stackrel{(\mref{eq:2.19})}=&\omega(r^{1})\otimes\omega\alpha^{-1}(r^{2})\bar{r}^{1}
 \otimes\beta(\bar{r}^{2})-\omega(r^{1})\otimes\alpha(s^{1})\otimes s^{2}\psi\beta^{-1}(r^{2})\\
 &=&\alpha(r^{1})\otimes r^{2}\bar{r}^{1}
 \otimes\beta(\bar{r}^{2})-\beta\omega(r^{1})\otimes\alpha(s^{1})\otimes s^{2}\psi(r^{2})\\
 &\stackrel{(\mref{eq:4.53})}=&r_{12}r_{23}-s_{23}r_{13}
 \end{eqnarray*}
 and
 \begin{eqnarray*}
 (\Delta\otimes\psi)(s) 
 &\stackrel{(\mref{eq:2.19})}=&\omega\alpha^{-1}(s^{1})r^{1}\otimes\beta(r^{2})
 \otimes\psi(s^{2})-\alpha(\bar{s}^{1})\otimes \bar{s}^{2}\psi\beta^{-1}(s^{1})\otimes\psi(s^{2})\\
 &=&\omega(s^{1})r^{1}\otimes\beta(r^{2})
 \otimes\alpha\psi(s^{2})-\alpha(\bar{s}^{1})\otimes \bar{s}^{2}s^{1}\otimes\beta(s^{2})\\
 &\stackrel{(\mref{eq:4.54})}=&s_{13}r_{12}-s_{12}s_{23}.
 \end{eqnarray*}
 Based on the above calculations, Eq.(\mref{eq:2.39}) is equivalent to Eq.(\mref{eq:4.53}) and Eq.(\mref{eq:2.40}) is equivalent to Eq.(\mref{eq:4.54}).
 \end{proof}

 \begin{cor}\mlabel{cor:2.16} Let $(A,\mu,\alpha,\beta)$ be a BiHom-associative algebra,  $\psi, \om: A\lr A$ two linear maps such that any two of $\a, \b, \psi, \om$ commute,   $r \in A\otimes A$ be $\a, \b, \psi, \om$-invariant, and $\Delta$ be defined by Eq.(\mref{eq:2.17}). Then $(A,\mu,\Delta,\alpha,\beta,\psi,\omega)$ is a quasitriangular infinitesimal BiHom-bialgebra if and only if Eq.(\mref{eq:2.39}) or
 \begin{eqnarray}
 &(\Delta\otimes\psi)(r)=-r_{23}r_{13}&\mlabel{eq:2.41}
 \end{eqnarray}
 holds.
 \end{cor}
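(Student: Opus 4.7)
The plan is to deduce this directly from Theorem \mref{thm:2.15} by specializing to $r=s$, and then to observe that in this specialization, Eq.~(\mref{eq:2.39}) and Eq.~(\mref{eq:2.41}) are not independent conditions but each individually encode the abhYBe of weight $0$.

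First I would set $s=r$ in the setup of Theorem \mref{thm:2.15}. Then $\delta_{r}=\delta_{s}$ and $\Delta$ from Eq.~(\mref{eq:2.19}) reduces to $\delta_{r}$, matching the hypothesis of Corollary \mref{cor:2.11a}; by Definition \mref{de:2.14}, $(A,\mu,\Delta,\alpha,\beta,\psi,\omega)$ is a quasitriangular infinitesimal BiHom-bialgebra exactly when $r$ solves the abhYBe of weight $0$, i.e.\ $r_{13}r_{12}-r_{12}r_{23}+r_{23}r_{13}=0$.

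Next I would revisit the two computations carried out in the proof of Theorem \mref{thm:2.15}. With $s=r$, the identity
\[
(\omega\otimes \Delta)(r)=r_{12}r_{23}-s_{23}r_{13}
\]
collapses to $(\omega\otimes \Delta)(r)=r_{12}r_{23}-r_{23}r_{13}$, so Eq.~(\mref{eq:2.39}) becomes precisely the abhYBe of weight $0$. Similarly,
\[
(\Delta\otimes \psi)(s)=s_{13}r_{12}-s_{12}s_{23}
\]
collapses to $(\Delta\otimes \psi)(r)=r_{13}r_{12}-r_{12}r_{23}$, so Eq.~(\mref{eq:2.41}) also becomes the abhYBe of weight $0$. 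Hence the two displayed conditions are equivalent (to each other and to the defining equation), which is why the corollary states ``Eq.~(\mref{eq:2.39}) or Eq.~(\mref{eq:2.41})'' rather than requiring both.

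There is no genuine obstacle: everything is a specialization of Theorem \mref{thm:2.15}, whose proof already carried out the necessary tensor manipulations using $\alpha,\beta,\psi,\omega$-invariance of $r$ and the commutation hypotheses on the structure maps. The only thing to check is that the reduction $s\mapsto r$ is consistent across all appearances of $\bar r,\bar s$ in the proof of Theorem \mref{thm:2.15}, which is immediate. Thus the corollary follows by a one-line application of Theorem \mref{thm:2.15} together with Definition \mref{de:2.14}.
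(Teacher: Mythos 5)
Your proposal is correct and takes essentially the same route as the paper, whose entire proof of Corollary \mref{cor:2.16} is simply ``Let $r=s$ in Theorem \mref{thm:2.15}.'' Your additional check that, under this specialization, Eqs.~(\mref{eq:2.39}) and (\mref{eq:2.41}) each individually reduce to the weight-$0$ abhYBe $r_{13}r_{12}-r_{12}r_{23}+r_{23}r_{13}=0$ (which is what justifies the ``or'' in the statement) merely makes explicit what the paper's one-line proof leaves implicit.
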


 \begin{proof}Let $r=s$ in Theorem \mref{thm:2.15}.
 \end{proof}

 \begin{pro}\mlabel{pro:2.17} Under the assumption of Proposition \mref{pro:2.11}, $(A,\mu,\delta_{r},\delta_{s},\Delta,\alpha,\beta,\psi,\omega)$ is a unitary quasitriangular covariant BiHom-bialgebra if and only if
 \begin{eqnarray}
 &\omega(r^{1})\otimes 1\otimes\psi(r^{2})=r_{13}r_{12}-r_{12}r_{23}+r_{23}r_{13},&\mlabel{eq:2.42}
 \end{eqnarray}
 or equivalently, Eq.(\mref{eq:2.39}) and
 \begin{eqnarray}
 &(\Delta\otimes\psi)(r)=-r_{23}r_{13}+1\otimes r^{1}\otimes r^{2}+\omega(r^{1})\otimes 1\otimes\psi(r^{2})&\mlabel{eq:2.43}
 \end{eqnarray}
 hold.
 \end{pro}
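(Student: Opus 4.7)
The plan is to use unitarity to pin down the relation between $s$ and $r$, then either apply Theorem~\ref{thm:2.15} or substitute directly into the abhYBp conditions of weight $(0,0)$.

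First I would unpack the unitary condition $\Delta(1_A)=1_A\otimes 1_A$ using the defining formula \meqref{eq:2.19}. A short calculation with \meqref{eq:2.3}--\meqref{eq:2.4}, the unit axioms \meqref{eq:1.5}, the identities $\psi(1_A)=\omega(1_A)=1_A$ from \meqref{eq:2.16}, and the $\alpha,\beta$-invariance of $r$ and $s$ yields $\Delta(1_A)=1_A\triangleright r-s\triangleleft 1_A=r-s$. Hence unitarity is equivalent to
\[
s=r-1_A\otimes 1_A.
\]

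Next, by Theorem~\ref{thm:2.15}, the quasitriangular covariant BiHom-bialgebra condition is equivalent to Eqs.~\meqref{eq:2.39} and \meqref{eq:2.40}. Equation~\meqref{eq:2.39} involves only $r$, so it remains unchanged. To deduce Eq.~\meqref{eq:2.43}, I would substitute $s=r-1_A\otimes 1_A$ into Eq.~\meqref{eq:2.40}: on the left, $(\Delta\otimes\psi)(s)=(\Delta\otimes\psi)(r)-1_A\otimes 1_A\otimes 1_A$, and on the right, expanding $-s_{23}s_{13}$ produces four terms which I would evaluate using the explicit formulas for $r_{ij}s_{kl}$ at the start of Section~\ref{se:yb}, the unit axioms, and the $\alpha,\beta,\psi,\omega$-invariance of $r$. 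The key computation is
\[
(1_A\otimes 1_A)_{23}\,r_{13}=\beta\omega(r^1)\otimes 1_A\otimes\beta\psi(r^2)=\omega(r^1)\otimes 1_A\otimes\psi(r^2),
\]
and similarly $r_{23}(1_A\otimes 1_A)_{13}=1_A\otimes r^1\otimes r^2$ together with $(1_A\otimes 1_A)_{23}(1_A\otimes 1_A)_{13}=1_A\otimes 1_A\otimes 1_A$. Assembling these four terms, Eq.~\meqref{eq:2.40} becomes Eq.~\meqref{eq:2.43}.

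For the direct equivalence with Eq.~\meqref{eq:2.42}, I would substitute $s=r-1_A\otimes 1_A$ into the abhYBp equations \meqref{eq:4.53} and \meqref{eq:4.54} of weight $(0,0)$. Equation~\meqref{eq:4.53} collapses immediately to
\[
r_{13}r_{12}-r_{12}r_{23}+r_{23}r_{13}=(1_A\otimes 1_A)_{23}\,r_{13}=\omega(r^1)\otimes 1_A\otimes\psi(r^2),
\]
which is Eq.~\meqref{eq:2.42}. Equation~\meqref{eq:4.54} expands into several mixed terms; the crucial observation is that they cancel pairwise (for instance $(1_A\otimes 1_A)_{13}r_{12}=r^1\otimes r^2\otimes 1_A=r_{12}(1_A\otimes 1_A)_{23}$, and analogously for the $(1_A\otimes 1_A)_{12}r_{23}$ term), leaving again only $(1_A\otimes 1_A)_{23}r_{13}$ on the right-hand side, so Eq.~\meqref{eq:4.54} also reduces to Eq.~\meqref{eq:2.42}. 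The main obstacle is this bookkeeping for \meqref{eq:4.54}: one must verify that all the cross terms produced by the double expansion of $s_{12}s_{23}$ and $s_{23}s_{13}$ cancel as claimed, using uniformly the unit axioms $\alpha(1_A)=\beta(1_A)=\psi(1_A)=\omega(1_A)=1_A$ together with the $\alpha,\beta,\psi,\omega$-invariance of $r$, which for example permits the identification $\beta\omega(r^1)\otimes\beta\psi(r^2)=\omega(r^1)\otimes\psi(r^2)$ used repeatedly.
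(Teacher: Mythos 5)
Your proposal is correct and follows essentially the same route as the paper's proof: unitarity forces $s=r-1_A\otimes 1_A$, and substituting this into the weight-$(0,0)$ equations \eqref{eq:4.53}--\eqref{eq:4.54} yields \eqref{eq:2.42}, while substituting into \eqref{eq:2.39}--\eqref{eq:2.40} of Theorem~\ref{thm:2.15} yields the equivalent pair \eqref{eq:2.39} and \eqref{eq:2.43}. Your expansion of the cross terms (including the pairwise cancellations in \eqref{eq:4.54} and the identification $\beta\omega(r^1)\otimes 1_A\otimes\beta\psi(r^2)=\omega(r^1)\otimes 1_A\otimes\psi(r^2)$ via invariance) checks out and in fact supplies bookkeeping that the paper's terse proof leaves implicit.
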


 \begin{proof} 
 By Eqs.(\mref{eq:4.53}), (\mref{eq:4.54}) for the case of weight $(0, 0)$ and $s=r-1\otimes 1$, we have Eq.(\mref{eq:2.42}). In addition, we complete the proof by replacing $s$ by $r-1\otimes1$ in Eqs.(\mref{eq:2.39}), (\mref{eq:2.40}).
 \end{proof}

 \begin{rmk} Eq.(\mref{eq:2.42}) is exactly the abhYBe of weight $-1$. Thus unitary quasitriangular covariant BiHom-bialgebra can provide the solution to the nonhomogeneous abhYBe.
 \end{rmk}

 \section{Rota-Baxter paired BiHom-module of weight $\lambda$}\label{se:rbpm} Rota-Baxter paired modules was introduced by Zheng, Guo and Zhang in \cite{ZGZ} in order to study Rota-Baxter module. In this section, we present two approaches to construct (BiHom-)pre-Lie modules from Rota-Baxter (BiHom-)paired modules.

\subsection{First approach}
 \begin{lem}\mlabel{lem:3.4} (\cite{MYZZ}) If $(A, R, \alpha, \beta)$ is a Rota-Baxter BiHom-associative algebra of weight $\lambda$, then $(A, R, R+\lambda \id, \alpha, \beta)$ and $(A, R+\lambda \id, R, \alpha, \beta)$ are Rota-Baxter BiHom-systems.
 \end{lem}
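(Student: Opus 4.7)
The plan is to unfold both asserted structures against Definition \mref{de:4.45} with $\xi=\zeta=0$ (i.e.\ the BiHom-version of Brzezi\'nski's Rota-Baxter system, as recorded in Remark \mref{rmk:4.41B}). Writing $S:=R+\lambda\id$, I first observe that the compatibilities in Eq.(\mref{eq:4.76}) for $S$ are immediate: $\alpha$ and $\beta$ commute with $R$ by hypothesis and with $\id$ trivially, so $\alpha S=S\alpha$ and $\beta S=S\beta$. So what really has to be checked are the two multiplicativity identities (\mref{eq:4.77}) and (\mref{eq:4.78}) with $\xi=\zeta=0$.

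For the pair $(R,S)=(R,R+\lambda\id)$ the first identity
$$R(a)R(b)=R(R(a)b)+R(aS(b))$$
is nothing but the weight $\lambda$ Rota-Baxter axiom
$$R(a)R(b)=R(R(a)b)+R(aR(b))+\lambda R(ab),$$
after expanding $S(b)=R(b)+\lambda b$ inside the second summand and using linearity of $R$. So step one is essentially a one-line verification.

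The second identity $S(a)S(b)=S(R(a)b)+S(aS(b))$ is the one where some algebra has to be done. I would expand both sides with $S=R+\lambda\id$: the left side produces
$$R(a)R(b)+\lambda R(a)b+\lambda a R(b)+\lambda^{2}ab,$$
while the right side produces
$$R(R(a)b)+R(aR(b))+\lambda R(ab)+\lambda R(a)b+\lambda a R(b)+\lambda^{2}ab.$$
The weight $\lambda$ Rota-Baxter identity collapses the first three terms of the right side back into $R(a)R(b)$, matching the left. This is the only step that is not completely mechanical, and it is still routine.

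The case $(R+\lambda\id,R)$ is handled symmetrically: the second identity becomes the Rota-Baxter axiom directly, and the first identity unpacks exactly as the symmetric expansion above. No obstacle is expected; the only point requiring a little care is to keep track of the order $(R,S)$ versus $(S,R)$ in (\mref{eq:4.77})-(\mref{eq:4.78}), since the two axioms of a Rota-Baxter system are not symmetric in their two operators.
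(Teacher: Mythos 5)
Your proof is correct: expanding $S=R+\lambda\id$ in Eqs.(\ref{eq:4.77})--(\ref{eq:4.78}) with $\xi=\zeta=0$ and collapsing the resulting terms via the weight-$\lambda$ Rota-Baxter identity is exactly the standard verification (the commutation conditions (\ref{eq:4.76}) are, as you say, immediate, and the BiHom maps $\alpha,\beta$ play no further role). The paper itself gives no proof of Lemma \ref{lem:3.4}, citing it from \cite{MYZZ}, and your direct computation supplies precisely the routine check that is omitted there.
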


 Let us recall from \cite{LMMP1} the following definition: A {\bf BiHom-dendriform algebra} is a vector space $A$ together with two binary operations $\prec, \succ: A \otimes A \rightarrow A$ and two commuting linear maps $\alpha, \beta: A \rightarrow A$ such that for all $a,b,c \in A$,
 \begin{eqnarray*}
 &\alpha(a\prec b)=\alpha(a)\prec\alpha(b),\quad \alpha(a\succ b)=\alpha(a)\succ\alpha(b),&\\%\mlabel{eq:4.21}
 &\beta(a\prec b)=\beta(a)\prec\beta(b),\quad \beta(a\succ b)=\beta(a)\succ\beta(b),&\\%\mlabel{eq:4.22}
 &(a\prec b)\prec\beta(c)=\alpha(a)\prec(b\prec c)+\alpha(a)\prec (b\succ c),&\\%\mlabel{eq:4.23}
 &(a\succ b)\prec\beta(c)=\alpha(a)\succ(b\prec c),&\\%\mlabel{eq:4.24}
 &\alpha(a)\succ(b\succ c)=(a\prec b)\succ\beta(c)+ (a\succ b)\succ\beta(c).&%\mlabel{eq:4.25}
 \end{eqnarray*}
 For simplicity, we denote it by $(A,\prec, \succ, \alpha, \beta)$.

 \begin{lem}\mlabel{lem:3.8} (\cite{MYZZ}) Let $(A, R, S, \alpha, \beta)$ be a Rota-Baxter BiHom-system and linear maps $\prec,\succ: A\otimes A\longrightarrow A$ be defined by
 \begin{eqnarray*}
 &a\prec b=aS(b),\quad a\succ b=R(a)b&%\mlabel{eq:3.16}
 \end{eqnarray*}
 for all $a,b\in A$. Then $(A, \prec, \succ, \alpha, \beta)$ is a BiHom-dendriform algebra.
 \end{lem}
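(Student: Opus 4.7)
My plan is to verify the five BiHom-dendriform axioms by direct computation, since the lemma is really a compatibility check: the multiplicativity conditions reduce to $\alpha\circ R=R\circ\alpha$, $\alpha\circ S=S\circ\alpha$ (and likewise for $\beta$) together with the multiplicativity of $\alpha,\beta$ on $(A,\mu)$, while the three remaining ``dendriform-associativity'' identities each reduce to a single Rota-Baxter relation followed by a single application of BiHom-associativity Eq.~(\mref{eq:1.3}). I would explicitly set $a\prec b=aS(b)$ and $a\succ b=R(a)b$ and treat the three identities in order.

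For the first identity $(a\prec b)\prec\beta(c)=\alpha(a)\prec(b\prec c)+\alpha(a)\prec(b\succ c)$, I would unfold the right-hand side to $\alpha(a)\bigl(S(aS(c))+S(R(b)c)\bigr)$, factor out using Eq.~(\mref{eq:4.78}) (with the curvature term $\zeta$ equal to zero, since we are in the uncurved Rota-Baxter BiHom-system of Remark~\mref{rmk:4.41B}) to rewrite it as $\alpha(a)(S(b)S(c))$, and then apply BiHom-associativity to obtain $(aS(b))\beta(S(c))$, matching the left-hand side after using $\beta\circ S=S\circ\beta$. For the middle identity $(a\succ b)\prec\beta(c)=\alpha(a)\succ(b\prec c)$, neither Rota-Baxter axiom is needed; it is a single application of BiHom-associativity after commuting $R$ with $\alpha$ and $S$ with $\beta$. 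For the third identity $\alpha(a)\succ(b\succ c)=(a\prec b)\succ\beta(c)+(a\succ b)\succ\beta(c)$, I would use Eq.~(\mref{eq:4.77}) to combine $R(aS(b))+R(R(a)b)$ into $R(a)R(b)$ on the right-hand side, and then apply BiHom-associativity to equate it with $\alpha(R(a))(R(b)c)$ on the left.

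The only item that requires care is bookkeeping of the structure maps: one must invoke the commutation $\alpha R=R\alpha$, $\beta S=S\beta$, etc. at precisely the right moment so that the Rota-Baxter identities in Eqs.~(\mref{eq:4.77})--(\mref{eq:4.78}) and the BiHom-associativity identity Eq.~(\mref{eq:1.3}) can be applied with matching arguments. There is no real conceptual obstacle; the main risk is a misplacement of $\alpha$ or $\beta$, which would make one side fail to match the other. Once the three computations are carried out symmetrically, the lemma follows immediately.
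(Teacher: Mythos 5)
Your proposal is correct and is essentially the intended argument: the paper itself states this lemma as a quotation from \mcite{MYZZ} without proof, and the standard verification there is exactly your direct check, in which the multiplicativity axioms follow from Eq.~(\meqref{eq:4.76}) together with multiplicativity of $\alpha,\beta$, while each of the three dendriform identities collapses to one Rota--Baxter identity (Eqs.~(\meqref{eq:4.77})--(\meqref{eq:4.78}) with $\xi=\zeta=0$) followed by a single application of BiHom-associativity Eq.~(\meqref{eq:1.3}), with the commutations $\alpha R=R\alpha$, $\beta S=S\beta$, etc.\ inserted where needed. One harmless typo: in the first identity the right-hand side unfolds to $\alpha(a)\bigl(S(bS(c))+S(R(b)c)\bigr)$, not $\alpha(a)\bigl(S(aS(c))+S(R(b)c)\bigr)$; your subsequent use of Eq.~(\meqref{eq:4.78}) to obtain $\alpha(a)(S(b)S(c))$ is consistent with the corrected term, so the argument goes through as written.
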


 \begin{cor}\mlabel{cor:3.15} Let $(A,R,\alpha,\beta)$ be a Rota-Baxter BiHom-associative algebra of weight $\lambda$, and let linear maps $\prec,\succ: A\otimes A\lr A$ be defined by
 \begin{eqnarray}
 &a\prec b=aR(b),\quad a\succ b=R(a)b+\lambda ab& \mlabel{eq:3.28}
 \end{eqnarray}
 for all $a,b\in A$. Then $(A,\prec,\succ,\alpha,\beta)$ is a BiHom-dendriform algebra.
 \end{cor}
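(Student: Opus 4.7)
The plan is to recognise Corollary \mref{cor:3.15} as a direct consequence of chaining Lemma \mref{lem:3.4} with Lemma \mref{lem:3.8}: first convert the Rota-Baxter BiHom-associative algebra of weight $\lambda$ into a Rota-Baxter BiHom-system, and then apply the BiHom-dendriform construction to that system.

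Concretely, I would start from $(A,R,\alpha,\beta)$ and invoke Lemma \mref{lem:3.4} in the form that gives the Rota-Baxter BiHom-system $(A, R+\lambda\id, R, \alpha, \beta)$ (rather than the other one, since the statement of the corollary puts the $\lambda ab$ contribution on the $\succ$ side, which corresponds to the first slot of the BiHom-system). Then I would apply Lemma \mref{lem:3.8} with $R$ replaced by $R+\lambda\id$ and $S$ replaced by $R$. By that lemma, the operations
\begin{eqnarray*}
a\prec b &=& a\, S(b) \;=\; aR(b),\\
a\succ b &=& (R+\lambda\id)(a)\, b \;=\; R(a)b+\lambda ab,
\end{eqnarray*}
endow $(A,\prec,\succ,\alpha,\beta)$ with the structure of a BiHom-dendriform algebra, matching exactly Eq.(\mref{eq:3.28}).

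The proof therefore reduces to picking the correct ordering in Lemma \mref{lem:3.4}, and no additional axiom checking is required: the BiHom-dendriform identities are already guaranteed by Lemma \mref{lem:3.8} once we have the Rota-Baxter BiHom-system in hand. There is no substantive obstacle; the only point to be careful about is not to pick $(A,R,R+\lambda\id,\alpha,\beta)$, which would place the weight-$\lambda$ correction on the $\prec$ side and reverse the roles of the two operations.
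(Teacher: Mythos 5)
Your proposal is correct and coincides with the paper's own proof: the paper likewise invokes Lemma \ref{lem:3.4} to obtain the Rota-Baxter BiHom-system $(A, R+\lambda\id, R, \alpha, \beta)$ and then concludes by Lemma \ref{lem:3.8}. Your additional remark about choosing this ordering (rather than $(A,R,R+\lambda\id,\alpha,\beta)$) so that the weight-$\lambda$ term lands on the $\succ$ side is exactly the right point of care and matches the paper's choice.
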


 \begin{proof} By Lemma \mref{lem:3.4}, we know that $(A, R+\lambda \id, R, \alpha, \beta)$ is Rota-Baxter BiHom-system, then the proof is finished by Lemma \mref{lem:3.8}.
 \end{proof}

 \begin{defi} (\cite{LMMP6}) A {\bf BiHom-pre-Lie algebra} is a vector space $A$ together with a binary operation $\ast: A \otimes A \rightarrow A$ and two commuting linear maps $\alpha,\beta: A \rightarrow A$ such that
 \begin{eqnarray*}
 &\alpha(a\ast b)=\alpha(a)\ast \alpha(b),\quad \beta(a\ast b)=\beta(a)\ast \beta(b),& \\
 &\alpha\beta(a)\ast (\alpha(b)\ast c)-(\beta(a)\ast \alpha(b))\ast \beta(c)
 =\alpha\beta(b)\ast (\alpha(a)\ast c)
 -(\beta(b)\ast \alpha(a))\ast \beta(c)&%\mlabel{eq:4.18}
 \end{eqnarray*}
 for all $a,b,c \in A$.
 \end{defi}

 \begin{pro}\mlabel{pro:3.16} Let $(A,R,\alpha,\beta)$ be a Rota-Baxter BiHom-associative algebra of weight $\lambda$ such that $\alpha,\beta$ are bijective. Define the operation $\star$ by
 \begin{eqnarray}
 &a\star b=R(a)b+\lambda ab-\alpha^{-1}\beta(b)R(\alpha\beta^{-1}(a))&\mlabel{eq:3.29}
 \end{eqnarray}
 for all $a,b\in A$. Then $(A,\star,\alpha,\beta)$ is a left BiHom-pre-Lie algebra.
 \end{pro}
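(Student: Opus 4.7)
The plan is to recognize the operation $\star$ as the standard pre-Lie bracket derived from the BiHom-dendriform structure supplied by Corollary \mref{cor:3.15}. With the operations $a\prec b=aR(b)$ and $a\succ b=R(a)b+\lambda ab$ from Eq.(\mref{eq:3.28}), a direct rewriting gives
\begin{eqnarray*}
a\star b=a\succ b-\alpha^{-1}\beta(b)\prec\alpha\beta^{-1}(a),
\end{eqnarray*}
so the claim reduces to the general BiHom-analogue of the classical Chapoton--Livernet fact: if $(A,\prec,\succ,\alpha,\beta)$ is a BiHom-dendriform algebra with $\alpha,\beta$ bijective, then the above twisted commutator defines a left BiHom-pre-Lie operation.

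First I would verify the multiplicativity identities $\alpha(a\star b)=\alpha(a)\star\alpha(b)$ and $\beta(a\star b)=\beta(a)\star\beta(b)$. Since $\alpha,\beta$ are bijective, mutually commuting, commute with $R$ by Eq.(\mref{eq:4.76}), and satisfy $\alpha(a\succ b)=\alpha(a)\succ\alpha(b)$ and $\alpha(a\prec b)=\alpha(a)\prec\alpha(b)$ (and likewise for $\beta$) by the BiHom-dendriform axioms, the twists $\alpha^{-1}\beta$ and $\alpha\beta^{-1}$ commute with both $\alpha$ and $\beta$, so each of the three summands in $a\star b$ transforms as required. This step is entirely routine.

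Next I would verify the left BiHom-pre-Lie identity
\begin{eqnarray*}
\alpha\beta(a)\star(\alpha(b)\star c)-(\beta(a)\star\alpha(b))\star\beta(c)
=\alpha\beta(b)\star(\alpha(a)\star c)-(\beta(b)\star\alpha(a))\star\beta(c).
\end{eqnarray*}
The strategy is to expand every $\star$ using $a\star b=a\succ b-\alpha^{-1}\beta(b)\prec\alpha\beta^{-1}(a)$, so each of the four double compositions decomposes into four summands of types $\succ\succ$, $\succ\prec$, $\prec\succ$, and $\prec\prec$. On each type the corresponding BiHom-dendriform axiom applies: the identity $\alpha(a)\succ(b\succ c)=(a\prec b)\succ\beta(c)+(a\succ b)\succ\beta(c)$ rewrites the $\succ\succ$ terms; $(a\succ b)\prec\beta(c)=\alpha(a)\succ(b\prec c)$ links the mixed types; and $(a\prec b)\prec\beta(c)=\alpha(a)\prec(b\prec c)+\alpha(a)\prec(b\succ c)$ handles the $\prec\prec$ terms. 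Symmetry in $a$ and $b$ of the combined right- and left-hand sides then produces the required cancellation, exactly mirroring the untwisted Chapoton--Livernet argument.

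The main obstacle will be the bookkeeping of the twists $\alpha^{-1}\beta$ and $\alpha\beta^{-1}$ under double application of $\star$. A generic term such as $\alpha^{-1}\beta(\alpha(b)\star c)\prec\alpha\beta^{-1}(\alpha\beta(a))$ must be massaged using the multiplicativity of $\alpha,\beta$ with respect to both $\prec$ and $\succ$, together with their commutativity with $R$, so as to align all arguments into the exact shape $\alpha(\,\cdot\,)\succ(\,\cdot\,)$ or $(\,\cdot\,)\prec\beta(\,\cdot\,)$ demanded by the dendriform axioms. Bijectivity of $\alpha,\beta$ is essential here: it is what allows the twists $\alpha^{-1}\beta$ and $\alpha\beta^{-1}$ to be pushed through $R$ and through the products, and it is what ultimately makes the pre-Lie cancellation pattern coincide with its untwisted counterpart.
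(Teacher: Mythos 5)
Your proposal is correct and takes essentially the same route as the paper: the paper's entire proof is the citation of Corollary \mref{cor:3.15} together with \cite[Proposition 2.6]{LMMP6}, whose content is precisely your identification $a\star b=a\succ b-\alpha^{-1}\beta(b)\prec\alpha\beta^{-1}(a)$ and the BiHom-analogue of the dendriform-to-pre-Lie (Chapoton--Livernet) passage. The only difference is that you propose to re-verify that general lemma by direct expansion of the sixteen terms against the three BiHom-dendriform axioms rather than citing it, which is sound but reproduces the known result rather than following a new path.
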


 \begin{proof} By Corollary \mref{cor:3.15} and \cite[Proposition 2.6]{LMMP6}, the proof is finished.
 \end{proof}

 \begin{defi}\mlabel{de:3.13}(\cite{MYZZ}) Fix a $\lambda\in K$. Let $(A, \mu, \alpha_{A}, \beta_{A})$ be a BiHom-associative algebra and $(M, \alpha_{M}, \beta_{M})$ a left $(A, \mu, \alpha_{A}, \beta_{A})$-module with action $\triangleright$. A pair $(R, T)$ of linear maps $R: A\lr A$ and $T:M\lr M$ is called a {\bf Rota-Baxter BiHom-paired operator of weight $\lambda$ on $(M, \alpha_{M}, \beta_{M})$} if
 \begin{eqnarray}
 &\alpha_{M}\circ T=T\circ\alpha_{M},~~ \beta_{M}\circ T=T\circ\beta_{M},~~ \alpha_{A}\circ R=R\circ\alpha_{A},~~ \beta_{A}\circ R=R\circ\beta_{A},&\mlabel{eq:3.24}\\
 &R(a)\triangleright T(m)=T(R(a)\triangleright m)+T(a\triangleright T(m))+\lambda T(a\triangleright m)&\mlabel{eq:3.25}
 \end{eqnarray}
 for all $a\in A$ and $m\in M$.

 We also call the $7$-tuple $(M, R, T, \alpha_{M}, \beta_{M}, \alpha_{A}, \beta_{A})$ a {\bf Rota-Baxter paired (left) $(A, \mu, \alpha_{A}, \beta_{A})$-module of weight $\lambda$}.
 \end{defi}

 \begin{rmk} If the structure maps $\alpha_{A}=\beta_{A}=\id_A$ and $\alpha_{M}=\beta_{M}=\id_M$, then we can get the usual notion of Rota-Baxter paired module introduced in \cite{ZGZ}.
 \end{rmk}

 \begin{defi}\mlabel{de:3.14} Let $(A,\mu,\alpha_{A},\beta_{A})$ be a (left) BiHom-pre-Lie algebra. A left {\bf $(A,\mu,\alpha_{A},\beta_{A})$-pre-Lie module} is a quadruple $(M,\triangleright,\alpha_{M},\beta_{M})$, where $M$ is a linear space and $\triangleright: A\otimes M\lr M$, $\alpha_{M},\beta_{M}: M\lr M$ are linear maps satisfying
 \begin{eqnarray}
 &\alpha_{M}\circ \beta_{M}=\beta_{M}\circ \alpha_{M},~~\alpha_{M}(a\triangleright m)=\alpha_{A}(a)\triangleright\alpha_{M}(m),~~\beta_{M}(a\triangleright m)=\beta_{A}(a)\triangleright\beta_{M}(m),&\mlabel{eq:3.26}\\
 &\alpha_{A}\beta_{A}(a)\triangleright(\alpha_{A}(b)\triangleright m)-(\beta_{A}(a)\cdot\alpha_{A}(b))\triangleright\beta_{M}(m)&\nonumber \\
 &\qquad\qquad\qquad\qquad\qquad\qquad\qquad=\alpha_{A}\beta_{A}(b)\triangleright(\alpha_{A}(a)
 \triangleright m)-(\beta_{A}(b)\cdot\alpha_{A}(a))\triangleright\beta_{M}(m)  &\mlabel{eq:3.27}
 \end{eqnarray}
 for all $a,b \in A$ and $m\in M$.

 \end{defi}

 \begin{thm}\mlabel{thm:3.17} Let $(A,R,\alpha_{A},\beta_{A})$ be a Rota-Baxter BiHom-associative algebra of weight $\lambda$ and $(M, R$, $T, \alpha_{M}, \beta_{M}, \alpha_{A}, \beta_{A})$ a Rota-Baxter paired $(A, \mu, \alpha_{A}, \beta_{A})$-module of weight $\lambda$. Define a map $\triangleright_{\star}: A\otimes M\lr M$ by
 \begin{eqnarray}
 &a\triangleright_{\star} m=R(a)\triangleright m+a\triangleright T(m)+\lambda a\triangleright m &\mlabel{eq:3.30}
 \end{eqnarray}
 for all $a\in A$ and $m\in M$. Then $(M, \triangleright_{\star}, \alpha_{M}, \beta_{M})$ is a left $(A, \star, \alpha_{A}, \beta_{A})$-pre-Lie module, where $\star$ is defined by Eq.(\mref{eq:3.29}).
 \end{thm}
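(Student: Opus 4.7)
The plan is to mirror, at the module level, the two-step strategy that proved Proposition \ref{pro:3.16}. Observe that Eq.~(\ref{eq:3.30}) decomposes as
$a\triangleright_{\star}m = a\succ_{M}m + a\prec_{M}m,$
where I set $a\succ_{M}m := R(a)\triangleright m+\lambda\, a\triangleright m$ and $a\prec_{M}m := a\triangleright T(m)$. Correspondingly, the BiHom-dendriform algebra produced by Corollary \ref{cor:3.15} decomposes as $a\succ b=R(a)b+\lambda ab$ and $a\prec b=aR(b)$, and Eq.~(\ref{eq:3.29}) reads $a\star b=a\succ b-\alpha^{-1}\beta(b)\prec\alpha\beta^{-1}(a)$.

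First, I would establish the three dendriform-module compatibility identities for $(\succ_{M},\prec_{M})$ with respect to $(\succ,\prec)$, i.e.~the direct module analogues of the three BiHom-dendriform axioms. Each identity is a one-line computation: the $\succ\succ$ identity uses the Rota-Baxter axiom for $R$ on $A$ together with the left-module axiom Eq.~(\ref{eq:1.15}), the mixed $\succ\prec$ identity uses only the BiHom-associativity/module axioms, and the $\prec\prec$ identity is precisely the paired Rota-Baxter compatibility Eq.~(\ref{eq:3.25}) for $(R,T)$, combined with Eqs.~(\ref{eq:3.24}) and (\ref{eq:3.26}). This step is the module-level analogue of Lemma \ref{lem:3.8}/Corollary \ref{cor:3.15}.

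Second, with the three dendriform-module identities in hand, the left BiHom-pre-Lie module identity Eq.~(\ref{eq:3.27}) for $\triangleright_{\star}$ is obtained by the same symmetrisation as in \cite[Proposition 2.6]{LMMP6}: expanding
$\alpha_{A}\beta_{A}(a)\triangleright_{\star}(\alpha_{A}(b)\triangleright_{\star}m)-(\beta_{A}(a)\star\alpha_{A}(b))\triangleright_{\star}\beta_{M}(m)$
via the decomposition $\succ_{M}+\prec_{M}$ and the formula for $\star$, the three dendriform-module identities collapse the expression into a sum of terms each of which either factors through the BiHom-associative product $ab$ acting on $\beta_{M}(m)$ or is already visibly invariant under the exchange $a\leftrightarrow b$. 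The main obstacle is purely bookkeeping: tracking the twists $\alpha^{\pm 1}_{A}\beta_{A}^{\mp 1}$ introduced by Eq.~(\ref{eq:3.29}) and interleaving the operators $R,T$ with $\alpha_{M},\beta_{M}$ through Eqs.~(\ref{eq:1.3}), (\ref{eq:1.15}), (\ref{eq:3.24}) and (\ref{eq:3.26}) without index error. No genuinely new idea beyond Proposition \ref{pro:3.16} is needed; Eq.~(\ref{eq:3.25}) plays for the pair $(R,T)$ exactly the role that the weight-$\lambda$ Rota-Baxter identity plays for $R$ alone, which reduces the module statement to a formal consequence of the algebra case.
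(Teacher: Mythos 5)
Your proposal is correct, and it takes a genuinely different route from the paper. The paper proves Theorem \ref{thm:3.17} by one long brute-force expansion: it unfolds both sides of Eq.~\eqref{eq:3.27} via Eqs.~\eqref{eq:3.30} and \eqref{eq:3.29}, repeatedly applies the module axiom \eqref{eq:1.15} and the paired Rota--Baxter identity \eqref{eq:3.25}, cancels, and observes that the residual expression is invariant under $a\leftrightarrow b$; no intermediate structure is introduced. You instead factor the computation through a module-level dendriform structure, and your attribution of hypotheses is exactly right: with $a\succ_M m=R(a)\triangleright m+\lambda a\triangleright m$ and $a\prec_M m=a\triangleright T(m)$, the $\succ\succ$ identity follows from the weight-$\lambda$ Rota--Baxter identity on $A$ together with \eqref{eq:1.15}, the mixed identity needs only \eqref{eq:1.15} and \eqref{eq:3.24}, and the $\prec\prec$ identity is precisely \eqref{eq:3.25} combined with $T\circ\beta_M=\beta_M\circ T$ (taking care, when writing the BiHom twists, that the module sits in the rightmost slot, so $\beta(c)$ becomes $\beta_M(m)$ while the $\alpha$'s stay on the algebra side). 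The symmetrization step then collapses cleanly: using the three identities one gets
\begin{equation*}
\alpha_A\beta_A(a)\triangleright_{\star}(\alpha_A(b)\triangleright_{\star}m)-(\beta_A(a)\star\alpha_A(b))\triangleright_{\star}\beta_M(m)
=\bigl(\beta_A(a)\prec\alpha_A(b)+\beta_A(b)\prec\alpha_A(a)\bigr)\triangleright_{\star}\beta_M(m),
\end{equation*}
which is manifestly symmetric in $a\leftrightarrow b$; note the residual terms factor through $\prec$ (i.e.\ through $\beta_A(a)R(\alpha_A(b))$ acting via $\triangleright_{\star}$), a small correction to your phrasing about factoring through $ab$, but not a gap. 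What your approach buys is a reusable intermediate statement (the module analogue of Lemma \ref{lem:3.8} and Corollary \ref{cor:3.15}) and a final step that is a formal module-level replay of \cite[Proposition 2.6]{LMMP6}; in particular, the other decomposition $a\prec b=aR(b)+\lambda ab$, $a\succ b=R(a)b$ from Lemma \ref{lem:3.19} would give Theorem \ref{thm:3.21} for free by the same argument, which the paper leaves as an omitted parallel computation. What the paper's direct approach buys is only brevity of setup, at the cost of a much longer and less transparent calculation.
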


 \begin{proof} By Proposition \mref{pro:3.16}, we know that $(A, \star, \alpha_{A}, \beta_{A})$ is a BiHom-pre-Lie algebra. Eq.(\mref{eq:3.26}) can be verified by Eq.(\mref{eq:3.24}).  So, next we only need to prove that Eq.(\mref{eq:3.27}) holds. For all $a, b\in A$ and $m\in M$, we have
 \begin{eqnarray*}
 &&\alpha_{A}\beta_{A}(a)\triangleright_{\star}(\alpha_{A}(b)\triangleright_{\star} m)-(\beta_{A}(a)\star\alpha_{A}(b))\triangleright_{\star}\beta_{M}(m)\\
 &&\qquad\stackrel{(\mref{eq:3.30})(\mref{eq:3.29})}= 
 \alpha_{A}\beta_{A}R(a)\triangleright(\alpha_{A}R(b)\triangleright m)+\alpha_{A}\beta_{A}(a)\triangleright T(\alpha_{A}R(b)\triangleright m)\\
 &&\qquad\qquad\quad+\lambda\alpha_{A}\beta_{A}(a)\triangleright(\alpha_{A}R(b)\triangleright m)+\alpha_{A}\beta_{A}R(a)\triangleright(\alpha_{A}(b)\triangleright T(m))\\
 &&\qquad\qquad\quad+\alpha_{A}\beta_{A}(a)\triangleright T(\alpha_{A}(b)\triangleright T(m))+\lambda\alpha_{A}\beta_{A}(a)\triangleright(\alpha_{A}(b)\triangleright T(m))\\
 &&\qquad\qquad\quad+\lambda\alpha_{A}\beta_{A}R(a)\triangleright(\alpha_{A}R(b)\triangleright m)+\lambda\alpha_{A}\beta_{A}(a)\triangleright T(\alpha_{A}(b)\triangleright m)\\
 &&\qquad\qquad\quad+\lambda^{2}\alpha_{A}\beta_{A}(a)\triangleright(\alpha_{A}(b)\triangleright m)-R(R(\beta_{A}(a))\alpha_{A}(b))\triangleright\beta_{M}(m)\\
 &&\qquad\qquad\quad-R(\beta_{A}(a))\alpha_{A}(b)\triangleright T(\beta_{M}(m))
 -\lambda R(\beta_{A}(a))\alpha_{A}(b)\triangleright\beta_{M}(m)\\
 &&\qquad\qquad\quad-\lambda R(\beta_{A}(a)\alpha_{A}(b))\triangleright\beta_{M}(m)
 -\lambda\beta_{A}(a)\alpha_{A}(b)\triangleright T(\beta_{M}(m))\\
 &&\qquad\qquad\quad-\lambda^{2} \beta_{A}(a)\alpha_{A}(b)\triangleright\beta_{M}(m)
 +R(\beta_{A}(b)R(\alpha_{A}(a)))\triangleright\beta_{M}(m)\\
 &&\qquad\qquad\quad+\beta_{A}(b)R(\alpha_{A}(a))\triangleright T(\beta_{M}(m))
 +\lambda\beta_{A}(b)\alpha_{A}R(a)\triangleright\beta_{M}(m)\\
 &&\qquad\quad\stackrel{(\mref{eq:1.15})}=(R\beta_{A}(a)R\alpha_{A}(b))\triangleright\beta_{M}(m)
 +\alpha_{A}\beta_{A}(a)\triangleright T(\alpha_{A}R(b)\triangleright m)\\
 &&\qquad\qquad\quad+\lambda\alpha_{A}\beta_{A}(a)\triangleright(R\alpha_{A}(b)\triangleright m)+(R\beta_{A}(a)\alpha_{A}(b))\triangleright \beta_{M}T(m)\\
 &&\qquad\qquad\quad+\alpha_{A}\beta_{A}(a)\triangleright T(\alpha_{A}(b)\triangleright T(m))+\lambda\beta_{A}(a)\alpha_{A}(b)\triangleright\beta_{M}T(m)\\
 &&\qquad\qquad\quad+\lambda\beta_{A}R(a)\alpha_{A}(b)\triangleright \beta_{M}(m)
 +\lambda\alpha_{A}\beta_{A}(a)\triangleright T(\alpha_{A}(b)\triangleright m)\\
 &&\qquad\qquad\quad+\lambda^{2}\beta_{A}(a)\alpha_{A}(b)\triangleright\beta_{M}(m)
 -R(R\beta_{A}(a)\alpha_{A}(b))\triangleright\beta_{M}(m)\\
 &&\qquad\qquad\quad-R(\beta_{A}(a))\alpha_{A}(b)\triangleright T(\beta_{M}(m))
 -\lambda R(\beta_{A}(a))\alpha_{A}(b)\triangleright\beta_{M}(m)\\
 &&\qquad\qquad\quad-\lambda R(\beta_{A}(a)\alpha_{A}(b))\triangleright\beta_{M}(m)
 -\lambda\beta_{A}(a)\alpha_{A}(b)\triangleright T(\beta_{M}(m))\\
 &&\qquad\qquad\quad-\lambda^{2} \beta_{A}(a)\alpha_{A}(b)\triangleright\beta_{M}(m)
 +R(\beta_{A}(b)R\alpha_{A}(a))\triangleright\beta_{M}(m)\\
 &&\qquad\qquad\quad+\beta_{A}(b)R\alpha_{A}(a)\triangleright T(\beta_{M}(m))
 +\lambda\beta_{A}(b)\alpha_{A}R(a)\triangleright\beta_{M}(m)\\
 &&\qquad\quad\stackrel{ }=R(\beta_{A}(a)R\alpha_{A}(b))\triangleright\beta_{M}(m)
 +R(R\beta_{A}(a)\alpha_{A}(b))\triangleright\beta_{M}(m)\\
 &&\qquad\qquad\quad+\lambda R(\beta_{A}(a)\alpha_{A}(b))\triangleright\beta_{M}(m)
 +\alpha_{A}\beta_{A}(a)\triangleright T(\alpha_{A}R(b)\triangleright m)\\
 &&\qquad\qquad\quad+\lambda\alpha_{A}\beta_{A}(a)\triangleright(R\alpha_{A}(b)\triangleright m)
 +\alpha_{A}\beta_{A}(a)\triangleright T(\alpha_{A}(b)\triangleright T(m))\\
 &&\qquad\qquad\quad+\lambda\alpha_{A}\beta_{A}(a)\triangleright T(\alpha_{A}(b)\triangleright m)
 -R(R\beta_{A}(a)\alpha_{A}(b))\triangleright\beta_{M}(m)\\
 &&\qquad\qquad\quad-\lambda R(\beta_{A}(a)\alpha_{A}(b))\triangleright\beta_{M}(m)
 +R(\beta_{A}(b)R\alpha_{A}(a))\triangleright\beta_{M}(m)\\
 &&\qquad\qquad\quad+\beta_{A}(b)R\alpha_{A}(a)\triangleright T(\beta_{M}(m))
 +\lambda\beta_{A}(b)\alpha_{A}R(a)\triangleright\beta_{M}(m)\\
 &&\qquad\quad\stackrel{(\mref{eq:1.15})}=R(\beta_{A}(a)R\alpha_{A}(b))\triangleright\beta_{M}(m)
 +\alpha_{A}\beta_{A}(a)\triangleright T(\alpha_{A}R(b)\triangleright m)\\
 &&\qquad\qquad\quad+\lambda\alpha_{A}\beta_{A}(a)\triangleright(R\alpha_{A}(b)\triangleright m)+\alpha_{A}\beta_{A}(a)\triangleright T(\alpha_{A}(b)\triangleright T(m))\\
 &&\qquad\qquad\quad+\lambda\alpha_{A}\beta_{A}(a)\triangleright T(\alpha_{A}(b)\triangleright m)
 +R(\beta_{A}(b)R\alpha_{A}(a))\triangleright\beta_{M}(m)\\
 &&\qquad\qquad\quad+\alpha_{A}\beta_{A}(b)\triangleright (R\alpha_{A}(a)\triangleright T(m))
 +\lambda\alpha_{A}\beta_{A}(b)\triangleright(R\alpha_{A}(a)\triangleright m)\\
 &&\qquad\quad\stackrel{(\mref{eq:3.25})}=R(\beta_{A}(a)R\alpha_{A}(b))\triangleright\beta_{M}(m)
 +\alpha_{A}\beta_{A}(a)\triangleright T(\alpha_{A}R(b)\triangleright m)\\
 &&\qquad\qquad\quad+\lambda\alpha_{A}\beta_{A}(a)\triangleright(R\alpha_{A}(b)\triangleright m)+\alpha_{A}\beta_{A}(a)\triangleright T(\alpha_{A}(b)\triangleright T(m))\\
 &&\qquad\qquad\quad+\lambda\alpha_{A}\beta_{A}(a)\triangleright T(\alpha_{A}(b)\triangleright m)
 +R(\beta_{A}(b)R\alpha_{A}(a))\triangleright\beta_{M}(m)\\
 &&\qquad\qquad\quad+\alpha_{A}\beta_{A}(b)\triangleright T(R\alpha_{A}(a)\triangleright m)
 +\alpha_{A}\beta_{A}(b)\triangleright T(\alpha_{A}(a)\triangleright T(m))\\
 &&\qquad\qquad\quad+\lambda\alpha_{A}\beta_{A}(b)\triangleright T(\alpha_{A}(a)\triangleright m)+\lambda\alpha_{A}\beta_{A}(b)\triangleright(R\alpha_{A}(a)\triangleright m).
 \end{eqnarray*}
 We find that if exchanging $a$ and $b$ in the last item of the above equation, then the last item remains unchanged. Therefore we can finish the proof.
 \end{proof}

 If $\alpha_{A}=\beta_{A}=\id_A$ and $\alpha_{M}=\beta_{M}= \id_M$ in Theorem \mref{thm:3.17}, then we have

 \begin{cor}\mlabel{cor:3.18} Let $(A,R)$ be a Rota-Baxter algebra of weight $\lambda$  and $(M,R,T)$ a Rota-Baxter paired $A$-module of weight $\lambda$. Define a map $\triangleright_{\star}: A\otimes M\lr M$ by Eq.(\mref{eq:3.30}). 
 Then $(M,\triangleright_{\star})$ is a left $(A,\star)$-pre-Lie module, where $\star$ is defined by Eq.(\mref{eq:3.29}).
 \end{cor}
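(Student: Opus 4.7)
The plan is to obtain Corollary \mref{cor:3.18} as an immediate specialization of Theorem \mref{thm:3.17}, setting all four structure maps $\alpha_A, \beta_A, \alpha_M, \beta_M$ equal to the relevant identity maps. The first step is to check that the input data of the corollary fits the hypotheses of the theorem under this specialization. A Rota-Baxter algebra $(A, R)$ of weight $\lambda$ is precisely a Rota-Baxter BiHom-associative algebra $(A, R, \id_A, \id_A)$ of weight $\lambda$, since the BiHom-associativity axiom (\mref{eq:1.3}) and the multiplicativity relations (\mref{eq:1.2}) become trivial. Similarly, by the remark following Definition \mref{de:3.13}, a Rota-Baxter paired $A$-module $(M, R, T)$ of weight $\lambda$ is exactly the 7-tuple $(M, R, T, \id_M, \id_M, \id_A, \id_A)$.

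Next I would verify that the definitional formulas (\mref{eq:3.29}) and (\mref{eq:3.30}) specialize to the ones used in the statement of the corollary. Since $\alpha^{-1}\beta = \id$ when $\alpha = \beta = \id$, the operation $\star$ from (\mref{eq:3.29}) becomes $a \star b = R(a)b + \lambda ab - bR(a)$, and the action $\triangleright_\star$ from (\mref{eq:3.30}) is unchanged in form. Moreover, the BiHom-pre-Lie module axioms of Definition \mref{de:3.14} collapse: condition (\mref{eq:3.26}) holds trivially, and the identity (\mref{eq:3.27}) reduces to the classical left pre-Lie module condition.

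With these reductions in place, Theorem \mref{thm:3.17} guarantees that $(M, \triangleright_\star, \id_M, \id_M)$ is a left BiHom-pre-Lie module over $(A, \star, \id_A, \id_A)$, which by the above identification is precisely the assertion that $(M, \triangleright_\star)$ is a left $(A, \star)$-pre-Lie module. There is no real obstacle here: the entire proof consists of the single line that the corollary is Theorem \mref{thm:3.17} in the case $\alpha_A = \beta_A = \id_A$, $\alpha_M = \beta_M = \id_M$. If anything deserves a brief remark, it is only that one should confirm the definitional formulas and the two ambient notions (Rota-Baxter algebra and Rota-Baxter paired module) do coincide with the classical ones under the specialization, which is immediate from the corresponding remarks already made in the paper.
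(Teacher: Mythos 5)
Your proposal is correct and coincides with the paper's own treatment: the paper derives Corollary \ref{cor:3.18} precisely by setting $\alpha_{A}=\beta_{A}=\id_A$ and $\alpha_{M}=\beta_{M}=\id_M$ in Theorem \ref{thm:3.17}, exactly as you do. Your additional verifications that the BiHom notions and the formulas \eqref{eq:3.29}--\eqref{eq:3.30} collapse to the classical ones are sound and merely make explicit what the paper leaves implicit.
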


\subsection{Second approach}
 The following are parallel conclusions of the above results, so we omit the proof.

 \begin{lem}\mlabel{lem:3.19} Let $(A,R,\alpha,\beta)$ be a Rota-Baxter BiHom-associative algebra of weight $\lambda$ and linear maps $\prec,\succ: A\otimes A\lr A$ be defined by
 \begin{eqnarray*}
 &a\prec b=aR(b)+\lambda ab,\quad a\succ b=R(a)b& %\mlabel{eq:3.31}
 \end{eqnarray*}
 for all $a,b\in A$. Then $(A, \prec, \succ, \alpha, \beta)$ is a BiHom-dendriform algebra.
 \end{lem}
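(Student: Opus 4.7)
The plan is to mimic the proof of Corollary \mref{cor:3.15}, but using the other Rota-Baxter BiHom-system produced by Lemma \mref{lem:3.4}. More specifically, Lemma \mref{lem:3.4} gives two Rota-Baxter BiHom-systems associated with $(A,R,\alpha,\beta)$: the pair $(A,R+\lambda\id,R,\alpha,\beta)$ was used in Corollary \mref{cor:3.15}, and the remaining pair $(A,R,R+\lambda\id,\alpha,\beta)$ is the one needed here.

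Concretely, I would apply Lemma \mref{lem:3.8} to $(A,R,R+\lambda\id,\alpha,\beta)$, i.e.\ with the roles $R\mapsto R$ and $S\mapsto R+\lambda\id$. The definitions in Lemma \mref{lem:3.8} then read
\begin{eqnarray*}
&a\prec b = a\,S(b) = a\bigl(R(b)+\lambda b\bigr) = aR(b)+\lambda ab,\quad a\succ b = R(a)\,b,&
\end{eqnarray*}
which match exactly the two operations appearing in the statement of Lemma \mref{lem:3.19}. Therefore the BiHom-dendriform axioms follow directly from Lemma \mref{lem:3.8}.

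There is no real obstacle: the work has already been done in Lemma \mref{lem:3.4} (which supplies the Rota-Baxter BiHom-system) and Lemma \mref{lem:3.8} (which turns any such system into a BiHom-dendriform algebra). The only thing one has to be mildly careful about is the assignment of $R$ and $S$ inside the general pattern $a\prec b=aS(b)$, $a\succ b=R(a)b$; choosing $S=R+\lambda\id$ rather than $R=R+\lambda\id$ is what distinguishes Lemma \mref{lem:3.19} from Corollary \mref{cor:3.15}. Hence a one-line proof of the form \emph{``By Lemma \mref{lem:3.4}, $(A,R,R+\lambda\id,\alpha,\beta)$ is a Rota-Baxter BiHom-system, so the result follows from Lemma \mref{lem:3.8}''} is expected to suffice, which is consistent with the author's remark that the statement is parallel to earlier results and the proof is omitted.
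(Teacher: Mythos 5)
Your proposal is correct and is exactly the argument the paper intends: the paper omits the proof as ``parallel'' to Corollary \ref{cor:3.15}, and the parallel route is precisely to take the other Rota-Baxter BiHom-system $(A,R,R+\lambda\,\id,\alpha,\beta)$ from Lemma \ref{lem:3.4} and feed it into Lemma \ref{lem:3.8}, which yields $a\prec b=a(R(b)+\lambda b)=aR(b)+\lambda ab$ and $a\succ b=R(a)b$ as required. Your identification of the correct role assignment ($S=R+\lambda\,\id$ here, versus $R+\lambda\,\id$ in the first slot for Corollary \ref{cor:3.15}) is the only subtle point, and you handled it correctly.
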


 \begin{lem}\mlabel{lem:3.20} Let $(A,R,\alpha,\beta)$ be a Rota-Baxter BiHom-associative algebra of weight $\lambda$ such that $\alpha,\beta$ are bijective. Define the operation $\natural$ by
 \begin{eqnarray*}
 &a\natural b=R(a)b-\alpha^{-1}\beta(b)R(\alpha\beta^{-1}(a))
 -\lambda\alpha^{-1}\beta(b)\alpha\beta^{-1}(a)& %\mlabel{eq:3.32}
 \end{eqnarray*}
 for all $a,b\in A$. Then $(A,\natural,\alpha,\beta)$ is a left BiHom-pre-Lie algebra.
 \end{lem}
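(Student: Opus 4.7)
The plan is to mirror, step for step, the chain of results that produced Proposition \mref{pro:3.16}, but applied to the ``other'' Rota-Baxter BiHom-system furnished by Lemma \mref{lem:3.4}. Recall that Proposition \mref{pro:3.16} was obtained from the pipeline: Lemma \mref{lem:3.4} $\Rightarrow$ Lemma \mref{lem:3.8} $\Rightarrow$ Corollary \mref{cor:3.15} $\Rightarrow$ \cite[Proposition 2.6]{LMMP6}. The only change here is to swap the roles of $R$ and $R+\lambda\id$ when forming the Rota-Baxter BiHom-system, which will exchange the definitions of $\prec$ and $\succ$ and produce a \emph{different} pre-Lie operation on $A$, namely $\natural$.

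First, by Lemma \mref{lem:3.4}, $(A, R, R+\lambda\id, \alpha, \beta)$ is a Rota-Baxter BiHom-system (this is the tuple complementary to the one used in Corollary \mref{cor:3.15}). Second, applying Lemma \mref{lem:3.8} to this system with the roles of the two operators swapped, one finds that the operations
\begin{eqnarray*}
&a\prec b=a(R+\lambda\id)(b)=aR(b)+\lambda ab,\quad a\succ b=R(a)b,&
\end{eqnarray*}
endow $(A,\prec,\succ,\alpha,\beta)$ with the structure of a BiHom-dendriform algebra; this is precisely Lemma \mref{lem:3.19}, which we may invoke directly.

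Third, the construction of \cite[Proposition 2.6]{LMMP6} says that for any BiHom-dendriform algebra $(A,\prec,\succ,\alpha,\beta)$ with $\alpha,\beta$ bijective, the operation
\begin{eqnarray*}
&a\,\bullet\,b=a\succ b-\alpha^{-1}\beta(b)\prec\alpha\beta^{-1}(a)&
\end{eqnarray*}
yields a left BiHom-pre-Lie algebra. Substituting the $\prec,\succ$ from Lemma \mref{lem:3.19}, we obtain
\begin{eqnarray*}
&a\,\bullet\,b=R(a)b-\alpha^{-1}\beta(b)R(\alpha\beta^{-1}(a))-\lambda\,\alpha^{-1}\beta(b)\alpha\beta^{-1}(a),&
\end{eqnarray*}
which coincides exactly with the operation $\natural$ defined in the statement. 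Hence $(A,\natural,\alpha,\beta)$ is a left BiHom-pre-Lie algebra.

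The proof is essentially bookkeeping; there is no substantial calculation, since the pre-Lie axiom is already baked into \cite[Proposition 2.6]{LMMP6}. The only place where one must be careful---and which I view as the main (though minor) obstacle---is ensuring that the algebraic identity between the abstract dendriform-to-pre-Lie formula and the concrete expression for $\natural$ matches on the nose, including the placement of $\alpha^{-1}\beta$ and $\alpha\beta^{-1}$ inside the $\lambda$-term. Once that correspondence is checked, the lemma follows immediately; this is exactly the sense in which it is ``parallel'' to Proposition \mref{pro:3.16}, justifying the authors' omission of an explicit proof.
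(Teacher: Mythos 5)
Your proposal is correct and is exactly the argument the paper intends: the authors omit the proof of Lemma \mref{lem:3.20} precisely because it is the parallel pipeline you describe, namely Lemma \mref{lem:3.4} with the system $(A,R,R+\lambda\id,\alpha,\beta)$, Lemma \mref{lem:3.8} yielding Lemma \mref{lem:3.19}, and then \cite[Proposition 2.6]{LMMP6} applied to $a\succ b-\alpha^{-1}\beta(b)\prec\alpha\beta^{-1}(a)$, which reproduces $\natural$ on the nose. Your only verbal imprecision (saying the operators are ``swapped'' in Lemma \mref{lem:3.8}, when in fact the system $(A,R,R+\lambda\id)$ is fed in directly and the swap is only relative to Corollary \mref{cor:3.15}) does not affect the explicit formulas you write, which are correct.
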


 \begin{thm}\mlabel{thm:3.21} Let $(A,R,\alpha_{A},\beta_{A})$ be a Rota-Baxter BiHom-associative algebra of weight $\lambda$ and $(M, R$, $T, \alpha_{M}, \beta_{M}, \alpha_{A}, \beta_{A})$ a Rota-Baxter paired $(A, \mu, \alpha_{A}, \beta_{A})$-module of weight $\lambda$. Define a map $\triangleright_{\star}: A\otimes M\lr M$ by Eq.(\mref{eq:3.30}). Then $(M, \triangleright_{\star}, \alpha_{M}, \beta_{M})$ is a left $(A, \natural, \alpha_{A}, \beta_{A})$-pre-Lie module.
 \end{thm}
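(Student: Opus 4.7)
The plan is to mirror the argument used for Theorem~\mref{thm:3.17}, with the role of $\star$ replaced by $\natural$. First, Lemma~\mref{lem:3.20} already establishes that $(A,\natural,\alpha_{A},\beta_{A})$ is a left BiHom-pre-Lie algebra, so the target structure on which the module lives is in place. The compatibility conditions in Eq.(\mref{eq:3.26}) for $\triangleright_{\star}$ are immediate from the definition in Eq.(\mref{eq:3.30}) together with the commutation of $R$ and $T$ with the structure maps (Eq.(\mref{eq:3.24})) and the bimodule axioms in Eq.(\mref{eq:1.13}); this step is essentially free.

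The substantive task is to verify the BiHom-pre-Lie module identity Eq.(\mref{eq:3.27}) with product $\natural$ and action $\triangleright_{\star}$. I would first expand $\alpha_{A}\beta_{A}(a)\triangleright_{\star}(\alpha_{A}(b)\triangleright_{\star} m)$ by applying Eq.(\mref{eq:3.30}) twice, producing nine summands whose coefficients lie in $\{1,\lambda,\lambda^{2}\}$ and whose $R,T$-decorations range over all combinations. Next I would expand $(\beta_{A}(a)\natural\alpha_{A}(b))\triangleright_{\star}\beta_{M}(m)$ by first unfolding $\natural$ via Lemma~\mref{lem:3.20} and then unfolding $\triangleright_{\star}$, yielding nine further summands.

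The cancellations come from two sources, exactly paralleling the proof of Theorem~\mref{thm:3.17}. The pieces involving only $R$ (no $T$) collapse using the weight-$\lambda$ Rota-Baxter identity on $A$, which is what drives the BiHom-pre-Lie property of $\natural$ in the first place. The mixed pieces involving $T$ are rearranged using BiHom-associativity of the left action (Eq.(\mref{eq:1.15})) and then absorbed via the Rota-Baxter paired identity Eq.(\mref{eq:3.25}), applied to rewrite $R(a)\triangleright T(m)$ as $T(R(a)\triangleright m)+T(a\triangleright T(m))+\lambda T(a\triangleright m)$ at the appropriate positions. After these substitutions the surviving expression is manifestly symmetric in $a$ and $b$, which is precisely Eq.(\mref{eq:3.27}).

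The main obstacle is purely bookkeeping: tracking the $\alpha_{A}\beta_{A}$ and $\alpha_{M}\beta_{M}$ decorations across roughly eighteen terms and ensuring that the $\lambda$-terms land in the positions dictated by $\natural$ (where $\lambda$ decorates the subtracted quadratic piece $\alpha^{-1}\beta(b)\alpha\beta^{-1}(a)$) rather than the positions used in $\star$ (where $\lambda$ decorates $R(a)b$). This redistribution of the $\lambda$-contributions is the only genuine difference from Theorem~\mref{thm:3.17}; once the terms have been grouped correctly, the final symmetry-in-$(a,b)$ observation that closes the proof of Theorem~\mref{thm:3.17} applies verbatim, which justifies the author's remark that the proof may be omitted.
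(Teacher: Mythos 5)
Your proposal is correct and coincides with the paper's intended argument: the paper explicitly omits this proof as a ``parallel conclusion'' of Theorem~\ref{thm:3.17}, and your plan reproduces exactly that parallel computation, correctly identifying the three working ingredients (the weight-$\lambda$ Rota-Baxter identity for the pure-$R$ terms, Eq.~(\ref{eq:1.15}) to rewrite $(\beta_{A}(b)R\alpha_{A}(a))\triangleright\beta_{M}T(m)$ as $\alpha_{A}\beta_{A}(b)\triangleright(R\alpha_{A}(a)\triangleright T(m))$, and Eq.~(\ref{eq:3.25}) to absorb it) together with the closing symmetry-in-$(a,b)$ observation, and your remark about the redistribution of the $\lambda$-terms between $\star$ and $\natural$ is the right account of the only genuine difference. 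One small bonus you could have noted: since $\beta_{A}(a)\star\alpha_{A}(b)-\beta_{A}(a)\natural\alpha_{A}(b)=\lambda(\beta_{A}(a)\alpha_{A}(b)+\beta_{A}(b)\alpha_{A}(a))$ is itself symmetric in $a$ and $b$, Theorem~\ref{thm:3.21} follows formally from Theorem~\ref{thm:3.17} without redoing the eighteen-term expansion.
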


 If $\alpha_{A}=\beta_{A}=\id_A$ and $\alpha_{M}=\beta_{M}= \id_M$ in Theorem \mref{thm:3.21}, then we have

 \begin{cor}\mlabel{cor:3.22} Let $(A,R)$ be a Rota-Baxter algebra of weight $\lambda$  and $(M,R,T)$ a Rota-Baxter paired $A$-module of weight $\lambda$. Define a map $\triangleright_{\star}: A\otimes M\lr M$ by Eq.(\mref{eq:3.30}). Then $(M, \triangleright_{\star})$ is a left $(A, \natural)$-pre-Lie module.
 \end{cor}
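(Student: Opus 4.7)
The plan is to deduce Corollary \mref{cor:3.22} as the $\alpha_{A}=\beta_{A}=\id_{A}$, $\alpha_{M}=\beta_{M}=\id_{M}$ specialization of Theorem \mref{thm:3.21}. First I would verify that each ingredient appearing in the statement reduces correctly: Lemma \mref{lem:3.20} collapses to give the ordinary left pre-Lie product $a\natural b=R(a)b-bR(a)-\lambda ba$ on the Rota-Baxter algebra $(A,R)$; Definition \mref{de:3.13} reduces to the Zheng--Guo--Zhang Rota-Baxter paired module identity $R(a)\triangleright T(m)=T(R(a)\triangleright m)+T(a\triangleright T(m))+\lambda T(a\triangleright m)$; and the map $\triangleright_{\star}$ given by Eq.~(\mref{eq:3.30}) becomes $a\triangleright_{\star} m=R(a)\triangleright m+a\triangleright T(m)+\lambda a\triangleright m$. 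With these observations, the content of Theorem \mref{thm:3.21} reads precisely as the desired conclusion: $(M,\triangleright_{\star})$ is a left $(A,\natural)$-pre-Lie module.

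Since the paper presents Theorem \mref{thm:3.21} as a parallel conclusion to Theorem \mref{thm:3.17} without a written-out proof, a reader wanting to check Corollary \mref{cor:3.22} directly (rather than by invocation) would expand both sides of the left pre-Lie module identity
$$(a\natural b)\triangleright_{\star} m+b\triangleright_{\star}(a\triangleright_{\star} m)=(b\natural a)\triangleright_{\star} m+a\triangleright_{\star}(b\triangleright_{\star} m),$$
use associativity of the algebra action, apply the weight-$\lambda$ Rota-Baxter identity $R(a)R(b)=R(R(a)b)+R(aR(b))+\lambda R(ab)$ to merge the $R$-of-product terms, and then apply the Rota-Baxter paired module condition to combine terms of the form $R(a)\triangleright T(m)$. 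After these simplifications the difference between the two sides becomes manifestly symmetric under $a\leftrightarrow b$ and therefore vanishes, exactly as in the final step of the proof of Theorem \mref{thm:3.17}.

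The main obstacle is nothing conceptual but rather the bookkeeping: $a\natural b$ has three summands and $\triangleright_{\star}$ adds three more, so one must track nine terms per side and verify that the minus signs coming from the $-bR(a)-\lambda ba$ part of $\natural$ align with the cancellations available via associativity, the Rota-Baxter axiom, and the paired-module axiom. Fortunately the long but structurally identical computation carried out in full for Theorem \mref{thm:3.17} serves as a template, so stripping off the $\alpha,\beta$ twists yields precisely the required identity in the classical case and no new difficulty arises.
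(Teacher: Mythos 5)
Your proposal is correct and follows the paper exactly: Corollary \ref{cor:3.22} is obtained there precisely by setting $\alpha_{A}=\beta_{A}=\id_{A}$ and $\alpha_{M}=\beta_{M}=\id_{M}$ in Theorem \ref{thm:3.21}, and your reductions of $\natural$, of Eq.~(\ref{eq:3.30}), and of the paired-module axiom are all accurate. Your supplementary sketch of a direct check (expand both sides, use associativity of the action, the weight-$\lambda$ Rota--Baxter identity, and the paired-module condition to reduce the difference to an expression symmetric in $a\leftrightarrow b$) is likewise sound and mirrors the computation the paper carries out in full for Theorem \ref{thm:3.17}, whose proof Theorem \ref{thm:3.21} is declared to parallel.
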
 

\section*{Acknowledgment} This work is supported by Natural Science Foundation of Henan Province (No. 212300410365).

 \end{document}